\title{Sharp inelastic character of slowly varying NLS solitons}
\author{Claudio Mu\~noz}
\address{Department of Mathematics, The University of Chicago, 5734 S. University Avenue, 
Chicago, Illinois 60637, USA}
\email{cmunoz@math.uchicago.edu}
\date{March, 2011}
\subjclass[2000]{Primary 35Q51, 35Q53; Secondary 37K10, 37K40}
\keywords{NLS equations, soliton dynamics, slowly varying medium}
\chardef\bslash=`\\ % p. 424, TeXbook
\newtheorem{thm}{Theorem}[section]
\newtheorem{lem}[thm]{Lemma}
\newtheorem{prop}[thm]{Proposition}
\theoremstyle{definition}
\newtheorem{defn}{Definition}[section]
\theoremstyle{remark}
\newtheorem{rem}{Remark}[section]
\newtheorem{Cl}{Claim}
\numberwithin{equation}{section}
\newcommand{\ds}{\displaystyle}
\newcommand{\R}{\mathbb{R}}
\newcommand{\la}{\lambda}
\newcommand{\al}{\alpha}
\newcommand{\ga}{\gamma}
\newcommand{\re}{\operatorname{Re}}
\newcommand{\ima}{\operatorname{Im}}
\def\bm{\left( \begin{array}{cc}}
\def\endm{\end{array}\right)}
 \providecommand{\abs}[1]{\lvert#1 \rvert}
\newcommand{\ve}{\varepsilon}
\newcommand{\be}{\begin{equation}}
\newcommand{\ee}{\end{equation}}
\newcommand{\ba}{\begin{equation*}}
\newcommand{\ea}{\begin{equation*}}
\newcommand{\bea}{\begin{eqnarray}}
\newcommand{\eea}{\end{eqnarray}}
\newcommand{\bee}{\begin{eqnarray*}}
\newcommand{\eee}{\end{eqnarray*}}
\newcommand{\ben}{\begin{enumerate}}
\newcommand{\een}{\end{enumerate}}
\newcommand{\nonu}{\nonumber}
\newcommand{\eval}[2][\right]{\relax
  \ifx#1\right\relax \left.\fi#2#1\rvert}
\let\abs=\envert
\begin{document}
\begin{abstract}
We consider \emph{soliton-like} solutions of the variable coefficients, subcritical nonlinear Schr\"odinger equation (NLS)   
$$
iu_t + u_{xx} + a(\ve x) |u|^{m-1}u =0,\quad \hbox{ in } \quad \R_t\times \R_x,  \quad m\in [3, 5),
$$
where $a(\cdot )\in (1,2)$ is an increasing, asymptotically flat potential, and $\ve$ small enough. In \cite{Mu1} we proved the existence of a \emph{pure}, global-in-time generalized soliton $u(t)$ of the above equation, satisfying
$$
\lim_{t\to -\infty}\|u(t) - Q_1(\cdot - v_0t)e^{i(\cdot)v_0/2} e^{i(1-v_0^2/4)t} \|_{H^1(\R)} =0,
$$
provided $\ve$ is small enough. Here $Q_c(s)$ is the positive solution of $Q_c'' -cQ_c + Q_c^m=0$, $Q_c\in H^1(\R)$. In addition, we proved that there are  $c_\infty>1$ and $v_\infty>0$, and functions $\rho(t), \ga(t)\in \R$, such that the solution $u(t)$ satisfies
$$
 \sup_{t\gg \frac 1\ve }\|u(t) - 2^{-1/(m-1)}Q_{c_\infty}(\cdot -v_\infty t-\rho(t))e^{i(\cdot)v_\infty/2} e^{i\ga(t)} \|_{H^1(\R)} +|\rho'(t) | \lesssim \ve^{2}.
$$
In this paper we prove that the soliton is not pure as $t\to +\infty$. Indeed, we give a sharp lower bound on the defect induced by the potential $a(\cdot)$. More precisely, one has
$$
\liminf_{t\to +\infty} \| u(t) -  2^{-1/(m-1)}Q_{c_\infty}(\cdot -v_\infty t-\tilde \rho(t))e^{i(\cdot)v_\infty/2}e^{i\tilde \ga(t)} \|_{H^1(\R)} \gtrsim \ve^{2},
$$
for all parameters $\tilde \rho(t)$ and $\tilde \ga(t)$ satisfying the above upper bound.  This result shows the existence of nontrivial dispersive effects acting on generalized solitons of slowly varying NLS equations, and for the first time, the inelasticity of the NLS soliton-potential dynamics.
\end{abstract}
\maketitle \markboth{Inelastic solitons of NLS equations} {Claudio Mu\~noz}
\renewcommand{\sectionmark}[1]{}
%\tableofcontents
%%%%%%%%%%%%%%%%%%%%%%%%%%%%%%%%%%%%%%%%%%%%%%%%%%%%%%%%%%
%%%%%%%%%%%%%%%%%%%%%%%%%%%%%%%%%%%%%%%%%%%%%%%%%%%%%%%%%%

\section{Introduction}

\medskip

This paper deals with the problem of inelastic interaction of soliton-like solutions of some generalized nonlinear Schr\"odinger equations (NLS), and it is the natural continuation of our previous paper \cite{Mu1}. In that paper, the goal was the study of \emph{generalized soliton solutions} for the following subcritical, variable coefficients NLS equation:
\be\label{aKdV0}
iu_t + u_{xx} + a(\ve x) |u|^{m-1}u =0,\quad \hbox{ in } \quad \R_t\times \R_x,  \quad m\in [2, 5).
\ee
Here $u=u(t,x)$ is a complex-valued function, $\ve>0$ is a small number, and the \emph{potential} $a(\cdot )$ a smooth, positive function satisfying some specific properties, see (\ref{ahyp}) below. 
 
\smallskip

This equation represents, in some sense, a simplified model of \emph{weakly nonlinear, narrow band wave packets}, which considers \emph{large} variations in the shape of the solitary wave. A primary physical model, and the dynamics of a generalized soliton-like solution, was described by Kaup-Newell  \cite{KN1}, using the Inverse Scattering Transform, and by Grimshaw \cite{Gr1}, using asymptotic expansions matched with approximate conservation laws. See e.g. \cite{KN1} and \cite{New} and references therein for a detailed physical introduction to these problems. 
\smallskip

On the other hand, from the mathematical point of view, equation (\ref{aKdV0}) is a variable-coefficients version of the standard NLS equation
\be\label{gKdV}
iu_t + u_{xx} + |u|^{m-1}u =0, \; \hbox{ in } \; \R_t\times \R_x;\quad m\in [2, 5).
\ee
This last equation is well-known because of the existence of exponentially decreasing, smooth solutions called \emph{solitons}, or solitary waves.\footnote{In this paper we will not consider any distinction between soliton and solitary wave.} Given real numbers $x_0,v_0,\ga_0$ and $c_0>0$, solitons are solutions of (\ref{gKdV}) of the form
\be\label{(3)}
u(t,x):= Q_{c_0}(x-x_0-v_0t)e^{ixv_0/2} e^{i(c_0-v_0^2/4)t} , \quad  \hbox{ with } \quad Q_c(s):=c^{\frac 1{m-1}} Q(c^{1/2} s),
\ee  
and where $Q:=Q_1$ is the unique --up to translations-- function satisfying the following second order, nonlinear ordinary differential equation
$$
Q'' -Q + Q^m =0, \quad Q>0, \quad Q\in H^1(\R).
$$
In this case, $Q$ belongs to the Schwartz class and it is explicit:
\be\label{QQ}
Q(s) = \Big[\frac{m+1}{2\cosh^2 (\frac 12(m-1)s)} \Big]^{\frac 1{m-1}}.
\ee
In particular, (\ref{(3)}) represents a solitary wave of scaling $c_0$ and velocity $v_0$, defined for all time, traveling without \emph{any change} in shape, velocity, etc. In other words, a soliton represents a \emph{pure}, traveling wave solution with \emph{invariant profile}. Moreover, under certain conditions, solitons and the sum of solitons have been showed to be \emph{orbitally} and \emph{asymptotically} stable, see e.g. \cite{CL,GSS1,GSS2,We1,MMT,BP,P,RSS,SW,We2,Cu1,Cu2} and references therein. 

\smallskip

Coming back to (\ref{aKdV0}), the corresponding Cauchy problem in $H^1(\R)$ has been considered in \cite{Mu1}, where it was proved that, under some conditions on $a(\cdot)$ to be explained below, solutions are globally well-defined in the $L^2$-subcritical regime $m\in [2,5)$. The proof of this result is an adaptation of the fundamental work of Ginibre-Velo \cite{GV}, see also \cite{Caz}.

\smallskip

A fundamental question related to (\ref{aKdV0}) is how to generalize a soliton-like solution to more general models. In \cite{BL}, the existence of solitons for NLS equations with autonomous nonlinearities has been considered. However, the understanding is more reduced in the case of an inhomogeneous nonlinearity, such as equation (\ref{aKdV0}). In a general situation, no elliptic, time-independent ODE can be associated to the solution, in opposition to the autonomous case studied in \cite{BL}. Therefore, other methods are needed.

\smallskip

The first mathematically rigorous results in the case of time and space dependent NLS equations were proved by Bronski-Jerrard \cite{BJ}. In addition, Gustafson et al. \cite{GFJS,FGJS}, Gang-Sigal \cite{GS}, and Holmer-Zworski \cite{HZ} have considered the dynamics of a small perturbation of a solitary wave, under general potentials, and for not too large times, namely of the order $t \sim \frac 1\ve$ and $t\sim \frac {1}{\ve}|\log\ve |$, with $\ve$ the slowly varying parameter. The best result in that case (\cite{HZ}) states that for any $\delta>0$ and for all time $t\lesssim \delta \ve^{-1}|\log \ve| $, the solution $u(t)$ of the corresponding Cauchy problem remains close in $H^1(\R)$ to a modulated solitary wave, up to an error of order $\ve^{2-\delta}.$ In addition, the dynamical parameters of the solitary wave follow a well defined dynamical system.

\smallskip

In \cite{Mu1} we described dynamics of a generalized soliton, \emph{for all time}, in for  time-independent, slowly varying NLS equations of the form (\ref{aKdV0}). The main novelty was the understanding of the dynamics as a nonlinear interaction, or collision,  between the soliton and the potential, in the spirit of the recent works of Holmer-Zworski \cite{HZ}, Martel-Merle \cite{MMcol1,MMcol2}, and the author \cite{Mu2,Mu3}. In order to state these results, and our present main results, let us first describe the framework that we have considered for the potential $a(\cdot)$ in (\ref{aKdV0}). 

\subsection*{Setting and hypotheses} Concerning the function $a$ in (\ref{aKdV0}), we assume that $a\in C^4(\R)$ and there exist fixed constants $K, \ga>0$ such that
\be\label{ahyp} 
\begin{cases}
1< a(r) < 2, \quad a'(r)>0, \quad \hbox{ for all } r\in \R, \\
0<a(r) -1 \leq  Ke^{\ga r}, \; \hbox{ for all } r\leq 0,  \qquad 0<2-a(r)\leq K e^{-\ga r} \; \hbox{ for all } r\geq 0,\, \hbox{ and}\\
 | a^{(k)}(r)| \leq K e^{-\ga|r|},  \quad \hbox{ for all } r\in \R,  \; k=1,2,3,4.
\end{cases}
\ee
In particular, $\lim_{r\to -\infty}a(r) = 1$ and $\lim_{r\to +\infty} a(r) = 2$. The limits (1 and 2) do not imply a loss of generality, it just simplifies the computations. 

\smallskip

We remark some important facts about (\ref{aKdV0}) (see \cite{Mu1} for more details). First of all, this equation is not  invariant under scaling and spatial translations. Second, the momentum
\be\label{Pa}
P[u](t)   :=  \frac 12 \ima \int_\R \bar u u_x (t,x) \, dx 
\ee
satisfies the relation
\be\label{dPa}
\partial_t P[u](t) =\frac{\ve}{m+1}\int_\R a'(\ve x) |u|^{m+1}(t,x)dx \geq 0.
\ee
On the other hand, the mass and energy
\be\label{Ma}
M[u](t)  :=  \frac 12\int_\R |u|^2(t,x)\,dx
\ee
\be\label{Ea}
E[u](t) :=  \frac 12 \int_\R |u_x|^2(t,x)\,dx - \frac 1{m+1}\int_\R  |u|^{m+1}(t,x)\,dx 
\ee
remain conserved along the flow. Let us recall that these quantities are conserved for local $H^1$-solutions of (\ref{gKdV}).  
\smallskip

Since $a\sim 1$ as $x\to -\infty$, given $v_0>0$, one should be able to construct a generalized soliton-like solution $u(t)$, satisfying $u(t,x) \sim Q(x -v_0t)e^{ixv_0/2} e^{i(1-v_0^2/4)t}$ as $t\to -\infty$.\footnote{Note that, with no loss of generality, we have chosen the scaling parameter equals one.} Indeed, this sort of scattering property has been proved in \cite{Mu1}, but for the sake of completeness, it is briefly described in the following paragraph.

\subsection*{Description of the dynamics} Let us recall the setting of our problem. Consider the equation
\be\label{aKdV}
\begin{cases}
iu_t + u_{xx} + a (\ve x) |u|^{m-1}u =0 \quad \hbox{ in \ } \R_t \times \R_x,   \\
m\in [2, 5);\quad  0< \ve\leq\ve_0;  \quad a(\ve \cdot) \hbox{ satisfying } (\ref{ahyp}). 
\end{cases}
\ee
Here $\ve_0>0$ is  a small parameter. Assuming the validity of (\ref{aKdV}), one has the following generalization of \cite{Martel}:

\begin{thm}[Existence of solitons for NLS under variable medium, \cite{Mu1}]\label{MT} Suppose $m\in [2,5)$. Let $v_0>0$ be a fixed number. There exists a small constant $\ve_0>0$ such that for all $0<\ve<\ve_0$ the following holds. There exists a unique solution $u\in C(\R, H^1(\R))$ of (\ref{aKdV0}), global in time, such that 
\be\label{Minfty}
\lim_{t\to -\infty} \|u(t) - Q(\cdot - v_0t)e^{i(\cdot)v_0/2}e^{i(1-v_0^2/4)t} \|_{H^1(\R)} =0.
\ee
\end{thm}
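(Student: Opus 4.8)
The plan is to follow the compactness approach introduced by Martel for gKdV (\cite{Martel}), adapted to the variable-coefficient NLS setting. First I would fix a sequence of times $T_n \to -\infty$ and, for each $n$, solve the Cauchy problem for (\ref{aKdV0}) \emph{backward} from $t=T_n$ with the exact soliton profile as final datum, namely $u_n(T_n, x) = Q(x-v_0T_n)e^{ixv_0/2}e^{i(1-v_0^2/4)T_n}$. By the $L^2$-subcritical global well-posedness theory recalled above, each $u_n$ is a global $H^1$ solution. The crux is to obtain estimates on $u_n$ that are \emph{uniform in $n$} over the whole interval $[T_n, -T_0]$, for some fixed large $T_0>0$, and then to pass to the limit $n\to\infty$.

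The heart of the argument is a uniform backward-in-time stability estimate. I would introduce modulation parameters $(c(t), \rho(t), v(t), \ga(t))$, decompose $u_n(t)$ as a modulated soliton $R(t)$ plus a remainder $w(t)$, and impose orthogonality conditions between $w$ and the infinitesimal generators of the symmetry group (phase, translation, Galilean boost and scaling) so as to annihilate the kernel directions of the linearization. On the remainder I would use the coercivity of the associated quadratic energy-mass functional, built from the operators $\Lp$ and $\Lm$ acting on the real and imaginary parts of $w$, which is positive definite modulo the finitely many directions removed by modulation. The new ingredient compared to the constant-coefficient case is that the potential produces a forcing proportional to $a(\ve x)-1$; but as $t\to-\infty$ the soliton is concentrated near $x\sim v_0t\to-\infty$, so (\ref{ahyp}) gives $0<a(\ve x)-1\lesssim e^{\ga\ve x}$, which is exponentially small of order $e^{\ga\ve v_0 t/2}$ in the soliton region, hence integrable in time. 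A continuity/bootstrap argument coupling this coercive estimate with the modulation equations, the conservation of mass (\ref{Ma}) and energy (\ref{Ea}), and the monotonicity of the momentum (\ref{dPa}), then yields $\norm{u_n(t)-R(t)}_{H^1(\R)} \lesssim e^{\theta\ve t}$ for some $\theta>0$, uniformly in $n$, for all $t\in[T_n,-T_0]$.

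With these uniform bounds in hand I would pass to the limit. The family $\{u_n(-T_0)\}$ is bounded in $H^1$, hence along a subsequence it converges weakly in $H^1$ and strongly in $L^2_{loc}$ to some $u_0$; the uniform exponential decay of the remainders upgrades this to strong $H^1$ convergence and transfers the estimate to the solution $u$ emanating from $u_0$, so that $\norm{u(t)-R(t)}_{H^1(\R)}\lesssim e^{\theta\ve t}$ on all of $(-\infty,-T_0]$, which forces precisely the limit (\ref{Minfty}). Global well-posedness then extends $u$ to $C(\R,H^1(\R))$. Uniqueness follows from the same a priori estimate applied to the difference of two solutions satisfying (\ref{Minfty}): both agree with the soliton with exponential accuracy as $t\to-\infty$, and a Gronwall-type argument on the difference propagates this agreement forward.

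The main obstacle I anticipate is the uniform coercivity of the energy-mass functional in the genuinely complex NLS setting. Unlike the scalar gKdV case, the linearization splits into the two operators $\Lp$ and $\Lm$, and controlling the coupling between the real and imaginary parts requires a carefully chosen set of orthogonality conditions together with a Lyapunov functional that correctly weights mass, energy, and the merely monotone (not conserved) momentum. Verifying that the variable-coefficient contributions do not spoil the positivity of this functional, uniformly in both $\ve$ and $n$, is the delicate point.
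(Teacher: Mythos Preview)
This theorem is not proved in the present paper: it is quoted from \cite{Mu1} (and attributed there as a generalization of \cite{Martel}), so there is no proof here to compare against directly. That said, your proposed strategy---Martel's compactness argument via a sequence of approximate problems with soliton data at times $T_n\to-\infty$, uniform backward stability from modulation plus coercivity, exploiting the exponential smallness of $a(\ve x)-1$ in the soliton region as $t\to-\infty$, and then a limiting argument---is precisely the scheme used in \cite{Mu1}, and the paper explicitly records its output as the quantitative bound (\ref{expode}), which is exactly the exponential estimate $\|u(t)-R(t)\|_{H^1}\lesssim e^{\mu\ve t}$ you arrive at. So your outline is correct and matches the cited proof in both structure and conclusion.
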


Let us remark that (\ref{Minfty}) is just a consequence of the following, more specific property: there exist $K,\mu>0$ such that
\be\label{expode}
\|u(t) - Q(\cdot - v_0t)e^{i(\cdot)v_0/2}e^{i(1-v_0^2/4)t} \|_{H^1(\R)} \leq Ke^{\mu\ve t}, \quad \hbox{for all $t\lesssim \ve^{-1-1/100}$\; (cf. \cite{Mu1}).}
\ee

\smallskip

Next, we have described the dynamics of interaction soliton-potential. Let $v_\infty, c_\infty , \la_0$ be the following parameters:
\be\label{l0}
v_\infty:= (v_0^2 +4\la_0 (c_\infty-1))^{1/2} , \quad c_\infty:= 2^{4/(5-m)} , \quad \la_0:=\frac{5-m}{m+3}.
\ee
Using the mass (\ref{Ma}) and energy (\ref{Ea}), one can guess the behavior of the solution $u(t)$ as $t\to +\infty$, assuming the  stability of the solution $u(t)$. Indeed, if for some $c,v>0$, $\rho(t), \ga(t) \in \R$, $\rho'(t)$ small, one has
$$
u(t) = 2^{-\frac 1{m-1}} Q_{c} (\cdot - v t - \rho(t)) e^{\frac i2(\cdot ) v} e^{i\ga(t)} + z(t), 
$$
with $\|z(t)\|_{H^1(\R)}\to 0$ as $t\to +\infty$, then necessarily $c=c_\infty$ and $v=v_\infty.$\footnote{The factor $2^{-1/(m-1)}$ in front of $Q_c$ is required since $a\to 2$ as $x\to +\infty$.} Following this idea, we have defined the notion of \emph{pure generalized soliton-like solution}.

\begin{defn}\label{PSS} Let $v_0>0$ be a fixed number. We say that (\ref{aKdV}) has a {\bf pure} generalized solitary wave solution (of scaling equals $1$ and velocity equals $v_0$) if there exist $C^1$ real valued functions $\rho=\rho(t),\ga= \ga(t)$ defined for all large times and a global in time $H^1(\R)$ solution $u(t)$ of (\ref{aKdV}) such that 
\bea
& & \lim_{t\to - \infty}\|u(t) -  Q(\cdot -v_0 t) e^{\frac i2 (\cdot) v_0} e^{i(1-\frac 14 v_0^2)t} \|_{H^1(\R)} = 0, \label{minf}\\
& & \lim_{t\to +\infty} \|u(t) - 2^{-\frac 1{m-1}} Q_{c_\infty} (\cdot - v_\infty t - \rho(t)) e^{\frac i2(\cdot ) v_\infty} e^{i\ga(t)}\|_{H^1(\R)} =0,  \label{pinf}
\eea
with $|\rho'(t)|\ll v_0$ for all large times, and where $c_\infty, v_\infty>0$ are the scaling and velocity suggested by the mass and energy conservation laws, as in (\ref{l0}).
\end{defn}

The solution $u(t)$ constructed in Theorem \ref{MT} satisfies (\ref{minf}). However, it is believed that, due to deep dispersive effects coming form the interaction between the soliton and the potential, the second condition (\ref{pinf}) above is \emph{never satisfied}.  Our first approach in that direction is the following stability result.

\begin{thm}[Interaction soliton-potential \cite{Mu1}]\label{MTL1} Suppose $v_0>0$, and $m\in [3, 5)$. There exists $K_0,\ve_0>0$ such that for all $0<\ve<\ve_0$ the following holds. There are smooth $C^1$ parameters $ \rho(t),\ga(t) \in \R $, such that the function 
$$
w(t,x) := u(t,x) - 2^{-1/(m-1)} Q_{c_\infty} (x- v_\infty t- \rho(t))e^{ixv_\infty/2} e^{i\ga(t)}
$$ 
satisfies, for all $t\gg \ve^{-1}$,
\be\label{St1l}
\| w(t) \|_{H^1(\R)} + | \rho'(t) |  \leq K_0\ve^{2}.
\ee
\end{thm}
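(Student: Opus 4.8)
The plan is to establish (\ref{St1l}) by an orbital-stability argument driven by \emph{modulation theory} and the conservation laws, organized as a continuity (bootstrap) argument that is uniform in $t\gg\ve^{-1}$. The estimate (\ref{expode}) shows that as $t\to-\infty$ the solution converges to the incoming soliton, so a modulated decomposition can be initialized with super-polynomially small error at a large negative time and propagated forward through the interaction region $|t|\lesssim\ve^{-1}$, where the potential varies, and into the asymptotic regime $t\gg\ve^{-1}$, where $a(\ve x)\approx 2$ along the soliton. In that regime I aim to propagate the decomposition
\begin{equation}
u(t,x) = \big[\, 2^{-1/(m-1)}Q_{c(t)}\big(\cdot - v_\infty t - \rho(t)\big)\, e^{i(\cdot)v(t)/2}\, e^{i\ga(t)} + \eta(t)\,\big](x),
\end{equation}
with the four parameters $c,v,\rho,\ga$ fixed by orthogonality conditions on $\eta$, and to show $\|\eta(t)\|_{H^1}+|c(t)-c_\infty|+|v(t)-v_\infty|\lesssim\ve^2$. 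Concretely I set a bootstrap on $[t_0,T^\ast)$, $t_0\sim\ve^{-1}$, assuming these quantities together with $|\rho'|$ are $\le K_0\ve^2$, and improve each by a factor $\tfrac12$, forcing $T^\ast=+\infty$; the $O(\ve^2)$ data at $t_0$ is supplied by the finite-time analysis of the interaction window.

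First I would construct the decomposition itself. Using the implicit function theorem, I choose $\eta(t)$ orthogonal to the four elements of the generalized kernel of the operator linearizing (\ref{aKdV}) about $Q_{c(t)}$, associated with the symmetries of scaling, translation, Galilean boost and phase. Differentiating the orthogonality relations against the equation produces the modulation system for $(c',v',\rho',\ga')$; its right-hand side splits into an autonomous piece that is quadratic in $\eta$ and a forcing piece proportional to $\ve\, a'(\ve x)$ tested against the soliton and $\eta$. The structural fact that makes the long-time analysis possible is that for $t\gg\ve^{-1}$ the soliton sits near $x\sim v_\infty t$, where by (\ref{ahyp}) one has $|a'(\ve x)|\lesssim K e^{-\ga\ve v_\infty t}$; the forcing is therefore exponentially small in $\ve t$ and cannot drive secular growth. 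In particular one reads off $\rho'(t)=v(t)-v_\infty+O(\|\eta\|_{H^1}^2)+(\text{exp.\ small})$, so the bound $|\rho'|\lesssim\ve^2$ follows once $|v-v_\infty|\lesssim\ve^2$ is known.

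The asymptotic parameters are then pinned down by the conservation laws (\ref{Ma}), (\ref{Ea}) and the momentum identity (\ref{dPa}). The prefactor $2^{-1/(m-1)}$ is forced because the $a\to 2$ limit of the soliton equation is solved exactly by $2^{-1/(m-1)}Q_c$. Equating the conserved mass of the incoming soliton $Q(\cdot-v_0t)$ to that of the outgoing profile gives $c_\infty^{(5-m)/(2(m-1))}=2^{2/(m-1)}$, i.e.\ $c_\infty=2^{4/(5-m)}$, exactly the value in (\ref{l0}); integrating (\ref{dPa}) over the whole trajectory and combining with the energy balance produces $v_\infty=(v_0^2+4\la_0(c_\infty-1))^{1/2}$. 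Evaluated on the decomposition, these identities give $|c(t)-c_\infty|+|v(t)-v_\infty|\lesssim\|\eta(t)\|_{H^1}^2+\ve^2$, which closes the parameter part of the bootstrap against the control of $\eta$.

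The heart of the matter is the $H^1$ bound on $\eta$, and the main obstacle is to obtain it \emph{with the sharp power} $\ve^2$ and \emph{uniformly for all} $t\gg\ve^{-1}$, since (\ref{aKdV}) has neither translation invariance nor an exact Lyapunov functional. I would build a localized functional $\mathcal F(t)$ from (\ref{Ea}), (\ref{Ma}) and (\ref{Pa}) with multipliers equal to the running values $c(t),v(t)$, so that its second variation at the soliton reproduces the linearized NLS action. The subcritical range $m\in[3,5)$ enters twice: the upper bound $m<5$ yields, through the Weinstein--Grillakis--Shatah--Strauss spectral theory, the coercivity $\mathcal F(t)-\mathcal F(\text{soliton})\gtrsim\|\eta(t)\|_{H^1}^2$ on the orthogonal complement of the kernel, while $m\ge 3$ guarantees that $|u|^{m-1}u$ is $C^2$, so that the remainder of the Taylor expansion of $\mathcal F$ is genuinely cubic, $O(\|\eta\|_{H^1}^3)$, and absorbable. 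Differentiating $\mathcal F$ in time, every non-autonomous contribution again carries the factor $\ve a'(\ve x)$ localized at $x\sim v_\infty t$ and is hence exponentially small, so that $|\mathcal F'(t)|\lesssim e^{-\kappa\ve t}+o(1)\,\|\eta\|_{H^1}^2$; integrating this differential inequality from $t_0$ and inserting the $O(\ve^2)$ data coming from the interaction window yields $\|\eta(t)\|_{H^1}\lesssim\ve^2$. The truly delicate step, which separates this sharp result from the finite-time $\ve^{2-\delta}$ bound of \cite{HZ}, is to show that the formal $O(\ve)$ terms in the modulation and energy estimates cancel --- a cancellation rooted in the virial/energy structure of the problem --- so that the residual error is quadratic rather than linear in $\ve$; this is what makes $K_0\ve^2$, and not merely $K_0\ve$, the correct bound in (\ref{St1l}).
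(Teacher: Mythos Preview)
Your outline is essentially right for the \emph{post-interaction} regime $t\gg\ve^{-1}$: once one knows that $u(\tilde T_\ve)$ is $O(\ve^2)$-close in $H^1$ to the outgoing modulated soliton $2^{-1/(m-1)}Q_{c_\infty}(\cdot-\rho_\ve)e^{i(\cdot)\tilde v_\infty/2}e^{i\ga_\ve}$, a coercive Weinstein functional built from $M,E,P$ (exactly as you describe) propagates this bound uniformly forward, because along the soliton $|a'(\ve x)|$ is exponentially small in $\ve t$. This is precisely the stability input the paper invokes (Proposition 2.3 of \cite{Mu1}, used in Step~2 of Section~\ref{5}).

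The genuine gap is the interaction window $|t|\lesssim\ve^{-1}$, where $a'(\ve\rho(t))$ is of order one and the forcing in the modulation equations is $O(\ve)$, not exponentially small. You acknowledge this is ``the truly delicate step'' but then assert that ``the formal $O(\ve)$ terms in the modulation and energy estimates cancel'' by some virial/energy mechanism. That is not how the $\ve^2$ is obtained in \cite{Mu1}. Modulating around the bare profile $2^{-1/(m-1)}Q_{c(t)}$ (or $\tilde a^{-1}Q_{c(t)}$) leaves a residual $S[\tilde R]$ whose leading part is exactly the $O(\ve)$ term $\ve F_1^R e^{i\Theta}$ in (\ref{F1R}); this is a genuine, nonvanishing source of size $\ve$ over a time interval of length $\ve^{-1}$, and no energy identity makes it integrate to $O(\ve^2)$. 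What \cite{Mu1} does instead --- and what the present paper reproduces and refines in Section~\ref{33} --- is to build a \emph{corrected} approximate solution
\[
\tilde u=\tilde R+\ve(A_{1,c}+iB_{1,c})e^{i\Theta}+\cdots,
\]
where $(A_{1,c},B_{1,c})$ are obtained by \emph{inverting} the linearized operators, $\mathcal L_+A_{1,c}=F_1$, $\mathcal L_-B_{1,c}=G_1$ (system $(\Omega_1)$; see (\ref{O1})--(\ref{AB1})). This explicitly kills the $O(\ve)$ residual, so that $\|S[\tilde u]\|_{H^1}\lesssim\ve^2 e^{-\mu\ve|\rho|}$, and the simultaneous orthogonality conditions $\int F_1Q_c'=\int G_1Q_c=0$ force the specific choices (\ref{f1f2}) of $f_1,f_2$, which is the origin of the ODE system (\ref{c}) governing $(C,V,U,H)$ and hence of the values $c_\infty,v_\infty$ in (\ref{l0}). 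Only \emph{after} this algebraic cancellation at the level of the ansatz does the energy/stability machinery you describe run, now around $\tilde u$ rather than the raw soliton, yielding $\|u-\tilde u\|_{H^1}\lesssim\ve^2$ throughout the interaction (this is Proposition~3.10 of \cite{Mu1}, sharpened here to Proposition~\ref{prop:I}). Your proposal is missing this construction entirely; without it the bootstrap you set up would close only at $O(\ve)$, not $O(\ve^2)$.
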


This result is in agreement with our expectations: the generalized soliton is in some sense stable along the positive direction of time and obeys, up to second order in $\ve$, the dynamics predicted by the mass and conservation laws. If $m$ belongs to the interval $ [2,3)$, or if we consider the two-dimensional case, then our conclusions are weaker: one has an upper bound of $O(\ve)$ (cf. \cite{Mu2}), revealing the dependence of the error on the smoothness of the nonlinearity.

\medskip

\subsection*{Main results} A natural question to be considered is the following: can one obtain a quantitative lower bound on the defect $w(t)$ as the time goes to infinity? In this paper we improve Theorem \ref{MTL1} by showing a sharp lower bound on the defect $w(t)$ at infinity. In other words, any perturbation of the constant coefficients NLS equation of the form (\ref{aKdV}) induces non trivial dispersive effects on the soliton, and the solution is not pure anymore. This result clarifies  the \emph{inelastic character} of generalized solitons for perturbations of some dispersive equations, and moreover, it seems to be the general behavior. Moreover, our result can be seen as the first mathematical proof of inelastic behavior in the case of an NLS dynamics. Additionally, one can see this result as a generalization to the case of interaction soliton-potential of the ground-breaking papers by Martel and Merle, concerning the inelastic character of the collision of two solitons for non-integrable gKdV equations \cite{MMcol1,MMcol3}.

\smallskip

However, in order to obtain such a quantitative bound, and compared with the proofs in \cite{MMcol3} or \cite{Mu4}, we require a new approach, because the defect is in some sense degenerate. As we will describe below, our lower bounds are related to third order corrections to the dynamical parameters of the soliton solution, propagated to time infinity using the forward stability of the solution, a consequence of (\ref{dPa}). The first result of this paper is the following

\begin{thm}[Sharp inelastic character of the soliton-potential interaction]\label{MTL} Suppose $m=3$, or $m\in [4, 5)$. There exist constants $\tilde v_0\geq 0$ (possibly zero), and $K,\ve_0>0$ such that, for all $0<\ve<\ve_0$, and $v_0\neq \tilde v_0$, the following holds. For any $\tilde \rho(t), \tilde \ga(t)\in \R$ satisfying, for all $t\gg \ve^{-1}$,
$$
 \|\tilde w(t)\|_{H^1(\R)}  + |\tilde \rho'(t)| \leq  K_0\ve^2,  \qquad (K_0 \hbox{ given in }(\ref{St1l})),
$$
where
$$
\tilde w(t,x) := u(t,x) - 2^{-1/(m-1)} Q_{c_\infty} (x- v_\infty t- \tilde \rho(t))e^{ixv_\infty/2} e^{i\tilde \ga(t)},
$$ 
one has
\be\label{LBound}
\liminf_{t\to +\infty} \|\tilde w(t)\|_{H^1(\R)} \geq \frac {\ve^{2}}{K}.
\ee
Moreover, in the case $m=3$ one has $\tilde v_0 =0$.
\end{thm}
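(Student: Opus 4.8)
The plan is to argue by contradiction: assume that $\liminf_{t\to+\infty}\|\tilde w(t)\|_{H^1}$ is smaller than $\ve^2/K$ for a large constant $K$, and contradict a quantitative defect identity produced by a refined asymptotic analysis of the interaction. The first step is a reduction. By Theorem~\ref{MTL1} the solution already stays within $K_0\ve^2$ of the modulated asymptotic soliton $R(t)=2^{-1/(m-1)}Q_{c_\infty}(\cdot-v_\infty t-\rho(t))e^{i(\cdot)v_\infty/2}e^{i\gamma(t)}$ for $t\gg\ve^{-1}$, and the monotonicity (\ref{dPa}) of the momentum provides forward-in-time stability. I would use this forward stability to upgrade a lower bound obtained along a single sequence $t_n\to+\infty$ to the full $\liminf$: since $\partial_t P[u]\ge0$ forbids the defect from oscillating freely, it suffices to exhibit one instant, deep in the post-interaction regime, at which $\|\tilde w\|_{H^1}\gtrsim\ve^2$, and then transport the estimate forward.

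The core of the argument is the construction of a refined approximate solution across the interaction zone $\{|\ve x|\lesssim1\}$, which the soliton crosses in time $\sim\ve^{-1}$. I would look for an ansatz
\[
U(t,x)=e^{i\Theta(t,x)}\Big[Q_{c(t)}(y)+\ve\,P_1(y;\ve x)+\ve^2 P_2(y;\ve x)+\ve^3 P_3(y;\ve x)\Big],\qquad y:=x-\int_0^t v(s)\,ds,
\]
with slowly varying parameters $(c,v,\gamma,\rho)$ whose evolution is fixed, order by order, by the solvability conditions for the linearized NLS operator $\mathcal L=(\Lp,\Lm)$ around $Q_c$. At first order these conditions reproduce the modulation laws whose integration yields the predicted limits $c_\infty,v_\infty$ of (\ref{l0}). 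The crucial point arises at third order: the source term generated by the Taylor expansion of $a(\ve x)$ around the soliton center has a component that lies outside the range of $\mathcal L$ modulo the four-dimensional (generalized) null space spanned by the soliton symmetries $\{\partial_y Q_c,\ \partial_c Q_c,\ iQ_c,\ iyQ_c\}$ (translation, scaling, phase and Galilean boost), and which therefore cannot be absorbed either into a correction profile or into a further adjustment of the parameters. This residual is exactly the radiated defect; I would extract its leading coefficient as an explicit integral against the soliton, of the schematic form $\int_\R a'(\ve x)\,\mathcal F(Q_{c_\infty})(x-v_\infty t)\,dx$, and show that it is $\gtrsim\ve^2$ precisely when $v_0\ne\tilde v_0$, the special velocity being the zero of this coefficient (and $\tilde v_0=0$ when $m=3$, where the cubic nonlinearity makes the relevant integral odd).

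Two technical points then close the argument. First, I must control the error $u-U$ on the interaction interval and beyond: since the approximate solution is accurate to order $\ve^3$, the true solution inherits the $\ve^2$ defect without spurious cancellations, and I would run an energy/virial estimate built on the conserved mass (\ref{Ma}) and energy (\ref{Ea}) together with the monotonicity (\ref{dPa}), which supplies exactly the one-directional control needed to push the bound to $t\to+\infty$ without any backward-in-time stability. Second, and this is where the quantifier over all admissible $\tilde\rho,\tilde\gamma$ enters, I must show the defect cannot be gauged away: using the coercivity of $\mathcal L$ on the subspace orthogonal to the four soliton directions, I would prove that the projection of $\tilde w$ onto this orthogonal complement is bounded below by the same $\ve^2$ coefficient for every choice of modulation parameters satisfying the upper bound (\ref{St1l}). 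This transversality, together with the non-vanishing of the defect coefficient, yields (\ref{LBound}).

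I expect the main obstacle to be the rigorous identification and lower bound of the non-removable defect coefficient. This requires computing the higher-order correction profiles precisely enough to isolate the part transverse to the full four-dimensional soliton manifold, verifying that this transverse part does not vanish (which reduces to the non-vanishing of an explicit oscillatory integral and forces the exclusion of the resonant velocity $\tilde v_0$), and separating genuine radiation from reversible parameter motion. The degeneracy flagged in the introduction — that the leading-order defect vanishes and one must descend to third order — is precisely what makes a naive virial or monotonicity argument insufficient and demands the full multi-scale construction.
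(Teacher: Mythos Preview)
Your overall architecture---contradiction, a refined approximate solution accurate to $O(\ve^3)$, forward propagation via stability---matches the paper. But you have misidentified the \emph{mechanism} of the defect, and this is a genuine gap that would cause the argument to stall.

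You claim that at third order the source term has a component lying ``outside the range of $\mathcal L$ modulo the four-dimensional null space,'' producing a radiated residual transverse to the soliton manifold, and then you propose to detect it via coercivity of $\mathcal L$ on the orthogonal complement. In the NLS setting this is exactly what \emph{does not} happen: the linear operators $\mathcal L_\pm$ are invertible between localized spaces on the orthogonal complements of $Q_c'$ and $Q_c$ (Lemma~\ref{Linear}), so every source term is fully absorbed---part into a Schwartz correction profile $(A_{3,c},B_{3,c})$, part into the parameter modulations $f_5,f_6$ that enforce the orthogonality conditions (Lemma~\ref{LOOO}). There is no leftover transverse radiation at this order; the approximate solution has error $O(\ve^4)$ (estimate (\ref{4p5})). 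This is the very distinction between NLS and gKdV flagged in the remarks after Theorem~\ref{MTL}: in gKdV the inversion produces non-localized tails, here it does not.

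The defect lives instead \emph{inside} the tangent space. The corrections $f_5,f_6$ to the velocity and scaling laws, when integrated over the interaction interval, shift the outgoing velocity: one obtains $\tilde v_\infty$ with $|\tilde v_\infty-v_\infty|\geq\kappa_0\ve^2$ (Lemma~\ref{Key}), while the outgoing scaling stays at $c_\infty$ to the required order. So the solution is $O(\ve^3)$-close to a soliton with the \emph{wrong} velocity $\tilde v_\infty$, not to a soliton with the right velocity $v_\infty$ plus orthogonal radiation. Your coercivity argument would not detect this, because the discrepancy is along the Galilean direction $iyQ_c$, which is precisely one of the directions you quotient out.

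The endgame is therefore different from what you describe. After propagating the $O(\ve^3)$ stability at velocity $\tilde v_\infty$ to all $t\geq\tilde T_\ve$ (this is Theorem~\ref{MTLsta}), the contradiction comes from the triangle inequality: if $\|\tilde w(T)\|_{H^1}\leq\alpha\ve^2$ for some large $T$, then two modulated solitons with Galilean phases $e^{ixv_\infty/2}$ and $e^{ix\tilde v_\infty/2}$ would be $O((\alpha+\ve)\ve^2)$-close in $H^1$, which forces $|\tilde v_\infty-v_\infty|\leq K(\alpha+\ve)\ve^2$ and contradicts $|\tilde v_\infty-v_\infty|\geq\kappa_0\ve^2$ once $\alpha$ is small. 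No coercivity, no orthogonal projection---just the elementary fact that solitons with distinct velocities are $H^1$-far from each other regardless of translation and phase.

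Finally, your explanation that $\tilde v_0=0$ for $m=3$ ``because the cubic nonlinearity makes the relevant integral odd'' is not what the paper does: the vanishing of $\tilde v_0$ there follows from an explicit computation showing the constant term $\hat\delta-\tfrac{4}{3}\bar\delta$ in (\ref{final}) is strictly positive, so $k(\tilde T_\ve)>0$ for every $v_0>0$.
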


\begin{rem} The requisite $m=3$ or $m\in [4,5)$ is due to the \emph{regularity} required to obtain a better description of the interaction, which is this time of third order in $\ve$. We believe that the above results hold for $m\in [3,5)$, but with a harder proof. The two-dimensional case seems even more difficult, since $m= 3$ is the $L^2$-critical nonlinearity.
\end{rem}

\begin{rem} The extra condition $v_0 \neq \tilde v_0$ is technical but not essential. It is related to the proof of the nonzero character of a defect on the main velocity. Fortunately, we are able to prove that in the case $m=3$ one has $\tilde v_0=0$ and therefore Theorem \ref{MTL} holds for all $v_0>0$. We believe that the same result holds in the case $m\in [4,5)$.
\end{rem}

\begin{rem}
Note that from Theorem \ref{MTL1} it is not clear if the parameters $\rho(t)$ and $\ga(t)$ are the best choices to satisfy (\ref{St1l}). Indeed, any perturbation of order less than $\ve^2$ satisfies the same inequality. For that reason Theorem \ref{MTL} rules out all possible values of $\rho(t),\ga(t)$, and proves inelasticity independently of the choice of parameters.
\end{rem}

\begin{rem}
In \cite{Mu1} we have considered the case of a strictly decreasing potential. In that case, the soliton is reflected, provided the initial velocity is small. Our proof does not cover that case, since no evident lack of symmetry is present at the third order in that case. Instead, one should look at the next orders of magnitude of the main solution, in order to find a defect because of the interaction.
\end{rem}

\begin{rem}
If we compare with the results obtained for gKdV equations  \cite{MMcol1,MMcol3,Mu0,Mu4}, our result is sharp since there is no essential gap between the bounds (\ref{St1l}) and (\ref{LBound}). Note that the gap in those papers was related to the emergence of \emph{infinite mass tails} in an approximate solution (see \cite{Mu2,Mu3} for a proof in the case of a slowly varying potential), which do not appear in the NLS case because the linearized NLS operators are solvable between localized spaces, unlike the gKdV case. 
\end{rem}

The proof of Theorem \ref{MTL} is actually a consequence of the following deeper result, which reveals the exact nature of the inelasticity for the case of slowly varying NLS dynamics: 

\begin{thm}\label{MTLsta}
Suppose $m=3$, or $m\in [4, 5)$, $v_0\neq \tilde v_0$, and $a(\cdot )$ satisfying (\ref{ahyp}). There exist constants $K,\ve_0>0$ such that, for all $0<\ve<\ve_0$, the following holds. There is a number $\tilde v_\infty >0$ and $C^1$-functions $ \rho(t), \ga(t)\in \R$ such that
$$
w(t,x) := u(t,x) -  2^{-1/(m-1)} Q_{c_\infty} (x - \tilde v_\infty t + \rho(t))e^{ix\tilde v_\infty /2} e^{i \ga(t)},
$$ 
satisfies
$$
\sup_{t\gg \frac 1\ve} \| w(t)\|_{H^1(\R)} +|\rho'(t)| \leq K\ve^3.
$$
Moreover, there is $\kappa_0>0$, independent of $\ve$, such that, for all $0<\ve<\ve_0$, one has
$$
 |\tilde v_\infty - v_\infty| \geq \kappa_0 \ve^2.
$$  
\end{thm}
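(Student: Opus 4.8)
The plan is to sharpen the modulated decomposition behind Theorem~\ref{MTL1} by one additional order in $\ve$, and to track the velocity parameter precisely enough to expose its third-order defect. Since the behavior as $t\to-\infty$ is pinned down by Theorem~\ref{MT}, the whole analysis is forward in time: we follow the solution $u(t)$ of Theorem~\ref{MT} as it crosses the region where $a'(\ve x)$ is concentrated and then relaxes toward a limiting soliton. First I would construct a higher-order approximate soliton of the form
$$
\tilde R(t,x) = \Big[ Q_{c(t)}(y) + \ve\, w_1(y;\ve\rho(t)) + \ve^2 w_2(y;\ve\rho(t)) \Big] e^{i\Phi(t,x)}, \qquad y = x - \tilde v_\infty t + \rho(t),
$$
where $\Phi$ carries the velocity and phase, and the corrections $w_1,w_2$ are forced by the Taylor expansion $a(\ve x) = a(\ve\rho) + \ve y\, a'(\ve\rho) + \tfrac12\ve^2 y^2 a''(\ve\rho) + \cdots$. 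Each $w_j$ solves an inhomogeneous linear equation governed by $\Lp,\Lm$, and the key structural fact (noted in the last remark above) is that, unlike the gKdV case, these operators are invertible on exponentially localized profiles. Hence $w_1,w_2$ can be chosen \emph{exponentially decaying}, which rules out the infinite-mass tails responsible for the loss in the gKdV estimates and is precisely what lets the final error be pushed to $O(\ve^3)$.

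Next I would write $u(t) = \tilde R(t) + \eta(t)$ and fix the geometric parameters $c(t),v(t),\rho(t),\ga(t)$ by imposing orthogonality of $\eta$ to the generalized kernel of $(\Lp,\Lm)$, namely the directions $\partial_y Q_c$, $\partial_c Q_c$, $iQ_c$ and $iyQ_c$. Projecting the equation onto these directions produces a closed ODE system whose leading forcing is proportional to $a'(\ve\rho)$, with $O(\ve^2)$ corrections expressed through $w_1,w_2$. A priori control of $\eta$ comes from the monotonicity of the momentum (\ref{dPa}), exactly as in the proof of Theorem~\ref{MTL1}. Integrating this system across the interaction region $|\ve\rho|\lesssim 1$ and letting $t\to+\infty$ both determines the limiting velocity $\tilde v_\infty$ and yields the bound $\|\eta(t)\|_{H^1(\R)} + |\rho'(t)|\lesssim\ve^3$ \emph{during} the interaction.

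The crux is the third-order velocity defect. Expanding the net change of velocity gives $\tilde v_\infty = v_\infty + \kappa\,\ve^2 + O(\ve^3)$, where $\kappa$ is an explicit integral over $\R$ of a fixed combination of $Q$, $\partial_c Q$, the first correction $w_1$, and $a'$, integrated along the trajectory. The entire statement hinges on $\kappa\neq 0$, and I expect this to be the main obstacle: one must evaluate this integral and prove it does not vanish. This is exactly where the monotonicity of $a$ matters, since an increasing $a$ breaks the symmetry that would otherwise force cancellation at third order (cf. the remark on decreasing potentials), and where the regularity restriction $m=3$ or $m\in[4,5)$ and the excluded velocity $\tilde v_0$ enter: for $v_0=\tilde v_0$ the integral degenerates, whereas for $m=3$ the algebra collapses so that $\tilde v_0=0$ and $\kappa\neq 0$ for every $v_0>0$. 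Establishing $\kappa_0:=|\kappa|>0$ uniformly in $\ve$ then gives $|\tilde v_\infty - v_\infty|\geq\kappa_0\ve^2$.

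Finally, once the soliton has left the interaction region, i.e.\ for $t\gg\ve^{-1}$ where $a'(\ve x)$ is exponentially small on the soliton support, I would close a bootstrap argument using a Lyapunov functional combining the conserved energy and mass with the monotone momentum, all adapted to the refined soliton $\tilde R$, together with the coercivity of $(\Lp,\Lm)$ on the orthogonal complement of the generalized kernel. Since the remaining forcing is now of size $O(\ve^3)$ and integrable in time, this upgrades the interaction bound to $\sup_{t\gg\ve^{-1}}\|w(t)\|_{H^1(\R)} + |\rho'(t)|\lesssim\ve^3$, which completes the proof. The subsequent deduction of Theorem~\ref{MTL} is then immediate: a soliton of velocity $v_\infty$ and one of velocity $\tilde v_\infty$ differ in $H^1(\R)$ by a quantity $\gtrsim|\tilde v_\infty - v_\infty|\gtrsim\ve^2$, so no choice of parameters with velocity $v_\infty$ can approximate $u(t)$ better than $O(\ve^2)$.
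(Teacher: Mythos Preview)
Your overall strategy matches the paper's: construct an improved approximate soliton, modulate, integrate the parameter ODEs across the interaction to expose a velocity defect, then propagate forward by stability. However, your ansatz stops one order too early. With only corrections $w_1,w_2$, the residual $S[\tilde R]$ is $O(\ve^3)$; since the interaction lasts a time $\sim\ve^{-1}$, the accumulated $H^1$ error between $u$ and $\tilde R$ is $O(\ve^2)$, not $O(\ve^3)$. This is precisely the level already reached in \cite{Mu1} (Theorem~\ref{MTL1}), and it is too coarse for either conclusion of the present theorem. The localization of $w_1,w_2$ that you correctly emphasize prevents the gKdV-type loss from dispersive tails, but it does not by itself buy an extra power of $\ve$. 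The paper therefore adds a \emph{third-order} correction $w_3$ (the pair $(A_{3,c},B_{3,c})$), pushing the residual to $O(\ve^4)$ and hence the modulation error to $O(\ve^3)$ (Propositions~\ref{prop:decomp} and~\ref{prop:I}).

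The velocity defect is likewise invisible at the level you describe. The refined modulation equation reads $v'=\ve f_1+\ve^3 f_5+O(\ve^4)$, and the constant $\kappa$ in $\tilde v_\infty=v_\infty+\kappa\ve^2+O(\ve^3)$ is built from $f_5$ together with its companion $f_6$ in the $c$-equation. These are fixed by the \emph{solvability conditions for the third-order linear system}: one writes down the $\ve^3$ sources $F_3,G_3$ (which involve both $w_1$ \emph{and} $w_2$, not just $w_1$ as you state) and imposes $\int_\R F_3 Q_c'=\int_\R G_3 Q_c=0$. Without carrying the expansion to third order, the term $\ve^3 f_5$ sits at the same order as the ODE error and cannot be isolated. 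The bulk of the paper (Sections~\ref{AAA} and~\ref{B}) is the explicit evaluation of $f_5,f_6$ and of the resulting constants $\hat\delta,\bar\delta$ that assemble $\kappa$; showing $\kappa\neq 0$ for $v_0\neq\tilde v_0$ is a long computation in terms of $A_1,B_1,A_2,B_2$ and the derivatives of $a$, not a soft consequence of the monotonicity of $a$.
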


\medskip

\begin{rem}
We emphasize that in the {\bf subcritical} regime with homogeneous power nonlinearity, it is not known, {\bf even in the constant coefficients case}, whether or not the term $w(t)$ scatters. Moreover, in the integrable case $m=3$, $a\equiv 1$ it is well-known that the corresponding asymptotic stability result is {\bf false} \cite{ZS}. Generically, we do not expect a positive result unless the internal modes associated to the nonzero eigenvalues of the linearized NLS operator around a solitary wave are not present. In this paper, our objective is different: we not only prove the existence of a defect, but also we qualify such a defect, explicitly in terms of the parameter $\ve$ and the main soliton parameters. We finally remark that the previous theorem together with our results in \cite{Mu1} give a complete account of the dynamics of a generalized soliton for all time.
\end{rem}

\medskip

\subsection*{About the proofs}  As we have explained before, the proof of the above results are originally based in a recent argument introduced by Martel and Merle in \cite{MMcol3}, to deal with the interaction of two nearly equal solitons of the quartic gKdV equation. Roughly speaking, Martel and Merle proved that the interaction is inelastic because of a small but not zero lack of symmetry on the soliton trajectories, contrary to the symmetric integrable case. Later, in \cite{Mu4}, we improved the foundational Martel-Merle idea in two directions: first, we generalized that argument to the case of the interaction soliton-potential (in the gKdV case), nontrivial since the problem has no evident symmetries to be exploited; and second, we have faced in addition, a somehow \emph{degenerate} case, the cubic one, where the original Martel-Merle argument is not longer available. 

\smallskip

It turns out that the NLS case satisfies the same degeneracy as the cubic gKdV equation, in a sense to be explained in the following lines. In \cite{Mu1}, we have considered an approximate solution of (\ref{aKdV}), describing the interaction soliton-potential.  The objective was to obtain \emph{first and second order corrections} on the translation, phase, velocity and scaling parameters $\rho(t), \ga(t), v(t), c(t)$ of the soliton solution, as we proceed to explain. Indeed, the solution $u(t)$ behaves along the interaction as follows:
\be\label{error0}
 \| u(t)  - a^{-1/(m-1)}(\ve \rho(t)) Q_{c(t)}(\cdot -\rho(t))e^{i(\cdot )v(t)/2}e^{i\ga(t)}\|_{H^1(\R)} \lesssim \ve^2,
\ee
with $v,c$ and $\rho$ satisfying the dynamical laws\footnote{We write $f_j=f_j(\ve t)$ in order to emphasize the fact that we are working with slowly varying functions, but in the rigorous proof below we only use the notation $f_j(t).$ }
\bea\label{rhoplus}
 & & v'(t)  =  \ve f_1(\ve t) + O(\ve^3) , \quad c'(t)  =  \ve f_2(\ve t) + O(\ve^3), \; \hbox{ with } \;  f_1(\ve t), f_2(\ve t)  \neq 0, \\
 & & \rho'(t)  =  v(t) +\ve^2 f_4(\ve t)  + O(\ve^2), \; \hbox{ with } \;  f_4(\ve t) \neq 0\label{cplus},
\eea
(see Proposition \ref{prop:I} for an explicit description of this dynamical system). Roughly speaking, the parameter $f_4(\ve t)$ satisfies 
\be\label{intf4}
\int_\R  \ve f_4 (\ve t)dt <+\infty .
\ee
Therefore, after integration on a time interval of size $O(\ve^{-1})$ near $t\sim 0$, this term formally induces a perturbation of order $O(\ve)$ on the trajectory $\rho(t)$, namely a defect on the dynamics, not in agreement with the conservation laws.

\smallskip

Using this property, one should be tempted to follow the same argument described in \cite{Mu4}, but in the NLS case we have several deep issues, that we describe below. The argument in \cite{Mu4} requires the introduction of a sort of opposite solution $v(t)$, pure as $t\to +\infty$, with slightly different dynamical parameters, to be more specific, at the second order in $\ve$. This crucial observation, first noticed by Martel and Merle in \cite{MMcol3} for the quartic gKdV model, represents a lack of symmetry in the dynamics, and is the key point of the proof in \cite{Mu4}. It seems that the NLS case does not enjoy of this property. Second, the proof in \cite{Mu4} employs a backward stability property for the difference between $v(t)$ and $u(t)$, which is not known in the NLS case. Moreover, probably the most difficult problem  to face is the sort of \emph{degeneracy} of the defect $\ve ^2 f_4(\ve t)$, in the sense that it has the \emph{same} order of magnitude compared with the error in (\ref{error0}) and (\ref{cplus}). 

\smallskip

A first answer to the last problem was given in the same paper \cite{Mu4}, where we have faced a similar degenerate problem, the cubic case of a slowly varying gKdV equation. The idea in that case is to profit of the existence of a defect (of order $\ve^2$) emerging at the level of the scaling law. Indeed, we improved the approximate solution (\cite{Mu3}) at the level of the dynamical system, but the global error does not improve, since a \emph{dispersive tail} appears and destroys the symmetry of the solution, and therefore the accuracy of the approximate solution. In order to avoid that problem, we have used a sharp virial identity to get a bound of order $o(\ve)$ when we integrate the global error over large intervals of time. At that time the defect appears as a concrete obstruction to elasticity. After that point, one can conclude the proof by propagating the defect to time infinity.   

\smallskip

In the NLS case, independently of the nonexistence of suitable virial identities, the improvement of the approximate solution at the level of the dynamical system leads to the improvement of the global error (\ref{error0}),  and vice-versa.\footnote{This key difference between NLS and gKdV emerges at the level of the linearized problem. In the NLS case, the invertibility of the linear operator leads to localized solutions, which is not the case of gKdV, by the presence of an additional derivative. See e.g. \cite{MMcol1} for the consequences of this fact when describing the interaction of two solitons. } In fact, we will show (cf. Proposition \ref{prop:I}) that, after some pages of lengthy computations (Sections \ref{AAA} and \ref{B}), (\ref{rhoplus})-(\ref{cplus}) contain now two additional corrections, denoted by $f_5(\ve t)$ and $f_6(\ve t)$, and such that
\bea\label{rhoplusN}
 & & v'(t)  =  \ve f_1(\ve t) +\ve^3 f_5(\ve t)  + O(\ve^4) , \quad c'(t)  =  \ve f_2(\ve t) +\ve^3 f_6(\ve t) + O(\ve^4), \\
 & & \rho'(t)  =  v(t) +\ve^2 f_4(\ve t)  + O(\ve^3),\label{cplusN}
\eea
with $f_j(\ve t)$, $j=1,\ldots, 6$ not identically zero and for $t\sim \ve^{-1-1/100}$,
\be\label{error1}
 \| u(t)  - a^{-1/(m-1)}(\ve \rho(t)) Q_{c(t)}(\cdot -\rho(t))e^{i(\cdot )v(t)/2}e^{i\ga(t)}\|_{H^1(\R)} \lesssim \ve^3.
\ee
Now the defects $f_5(\ve t)$ and $f_6(\ve t)$ are relevant for the dynamics, and induce nontrivial $O(\ve^2)$ corrections to the final scaling and velocity parameters, provided certain nonzero integral conditions are satisfied, similar to (\ref{intf4}). Indeed, we will have, for  $t\sim \ve^{-1-1/100}$, 
\be\label{lila}
c(t)\sim c_\infty , \quad v(t)\sim v_\infty + \kappa_0 \ve^2, \qquad \kappa_0\neq 0, 
\ee
where $c_\infty, v_\infty$ are the scaling and velocity predicted by the mass and energy conservation laws, given in (\ref{l0}) (cf. Lemma \ref{Key} for a detailed proof).

\smallskip

The purpose for the rest of proof is to exploit this property. The idea is the following: if (\ref{LBound}) is not satisfied, then using the stability of $u(t)$ for large times (cf. \cite{Mu1}) we can propagate the defect (\ref{lila}) with a global error of $O(\ve^3)$, a bound that contradicts (\ref{LBound}). Finally, note that our argument do not require a backward stability result to be proved.  In that sense, our proof differs from that of \cite{Mu4}.

\begin{rem}[The gKdV case]
As previously mentioned, the interaction soliton-potential has also been considered in the case of generalized KdV equations with a slowly varying potential, or a soliton-defect interaction. See e.g. Dejak-Sigal \cite{DS}, Holmer \cite{H}, Holmer-Perelman-Zworski \cite{HPZ}, and our recent works \cite{Mu2,Mu3,Mu4}. 
\end{rem}

\begin{rem} Additionally, one can consider the problem of solitary wave-defect interaction, namely the case where the potential is similar to a Dirac distribution. In this case, one may expect the splitting of the solitary wave, see e.g. \cite{GHW,HMZ0,HMZ,P5}. Finally, the behavior of perturbations of small solitary waves of NLS equations, and its corresponding dynamics, has been considered in  \cite{GW,SW2}.
\end{rem}

\smallskip

Let us explain the organization of this paper. First, in Section \ref{4} is devoted to the rigorous proof of (\ref{lila}). In Section \ref{5} we prove the main theorems. Finally, in Section \ref{33} we improve the approximate solution associated to the interaction problem, and we find the corrections $f_5$ and $f_6$ above mentioned. 

\medskip

\noindent
{\bf Notation.} We follow the notation introduced in \cite{Mu1}. In particular, in this paper both $K,\mu>0$ will denote fixed constants, independent of $\ve$, and possibly changing from one line to another.  Additionally, we introduce, for $\ve>0$ small, the time of interaction
\be\label{Te}
 T_\ve := \frac {1}{v_0}\ve^{-1 -\frac 1{100}}>0.
\ee

\noindent
{\bf Acknowledgments}. I would like to thank Carlos Kenig for several interesting remarks on a first version of this paper.

\bigskip

\section{Existence of  a defect}\label{4}

\medskip

The purpose of this section is to show rigorously the existence of a defect associated to the scaling and velocity parameters of the soliton solution constructed in \cite{Mu1}. The main result of this section is contained in a simple computational result,  Lemma \ref{Key}. 

\medskip

Denote, for $C>0$, $V,P\in \R$ given, and $m\in [2, 5)$,
\be\label{f1}
f_1(C,U) := \frac{8 a'(\ve U) C}{(m+3)a(\ve U)}, \quad  f_2(C,V,U)  :=\frac{4 a'(\ve U) CV}{(5-m)a(\ve U)}.
\ee
We recall the existence of a unique solution for a dynamical system involving the evolution of the first order \emph{scaling}, \emph{velocity},  \emph{translation} and \emph{phase} parameters of the soliton solution, denoted by $(C(t), V(t), U(t),H(t))$, in the interaction region. The behavior of this solution is essential to understand the dynamics of the soliton inside this region.

\begin{lem}[\cite{Mu1}, Lemma 3.4]\label{ODE} Let $m\in [2, 5)$, and $v_0>0$. Let $\la_0, a(\cdot)$ and $f_1, f_2$ be as in (\ref{l0}), (\ref{ahyp}) and (\ref{f1}) respectively. There exists $\ve_0>0$ small such that, for all $0<\ve<\ve_0$, the following holds. 

\begin{enumerate}
\item \emph{Existence}. 
There exists a unique solution $(C(t),V(t),U(t), H(t))$, with $C(t)$ bounded, monotone increasing and positive, defined for all $t\geq -T_\ve$, of the following nonlinear ODE system  
\be\label{c}
\begin{cases}
V'(t) =\ve f_1(C(t), U(t)), & V(-T_\ve) =v_0, \\
C'(t) = \ve f_2(C(t), V(t), U(t)), & C(- T_\ve) = 1, \\
U'(t) = V(t), & U(-T_\ve) =-v_0 T_\ve, \\
H'(t) =-\frac 12 V'(t) U(t), & H(-T_\ve) = 0.
\end{cases}
\ee
Moreover, $C(t)$, $V(t)$ satisfy the relation
\be\label{CV}
V^2(t) = v_0^2 +4\la_0 (C(t) -1).
\ee

\item \emph{Asymptotic behavior}. Let $v_\infty, c_\infty$ be defined as in (\ref{l0}). Then one has  $\lim_{t\to +\infty} C(t)  =c_\infty (1+O(\ve^{10}))$, $\lim_{t\to +\infty} V(t)  =v_\infty (1+O(\ve^{10}))$,  and $\lim_{t\to +\infty} U(t) = +\infty$.
\end{enumerate}
\end{lem}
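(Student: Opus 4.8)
The plan is to handle the system (\ref{c}) by standard ODE theory for local existence, to extract the algebraic first integral (\ref{CV}) by a direct differentiation, and then to exploit the key structural fact that reparametrizing by $U$ turns the $C$-equation into a separable (indeed exact) equation, whose closed form yields at once global existence, boundedness, monotonicity and the asymptotics. Since $a\in C^4(\R)$ and $a$ is bounded away from $0$ by (\ref{ahyp}), the vector field defining $(C,V,U)$ in (\ref{c}) is locally Lipschitz, so Cauchy--Lipschitz produces a unique maximal solution on some interval $[-T_\ve,T^*)$; the $H$-equation decouples and is recovered by a single quadrature once $(V,U)$ are known, so I would focus on $(C,V,U)$.

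First I would verify (\ref{CV}) by differentiating: using the explicit forms (\ref{f1}) together with $\la_0=\frac{5-m}{m+3}$ one finds $\frac{d}{dt}V^2 = 2\ve V f_1 = \frac{16\ve a'(\ve U)CV}{(m+3)a(\ve U)}$ and $4\la_0 C' = 4\la_0\ve f_2 = \frac{16\ve a'(\ve U)CV}{(m+3)a(\ve U)}$, so that $\frac{d}{dt}(V^2-4\la_0 C)\equiv 0$; matching the data $V(-T_\ve)^2=v_0^2$, $C(-T_\ve)=1$ gives (\ref{CV}). The sign analysis then runs as follows: because $a>0$, $a'>0$ by (\ref{ahyp}), $5-m>0$, and $C(-T_\ve)=1>0$, the factorized form of $f_2$ forces $C'>0$ as long as $V>0$; hence $C$ is increasing with $C\geq 1$, which through (\ref{CV}) yields $V^2\geq v_0^2$, i.e. $V\geq v_0>0$, closing the loop and making $U'=V\geq v_0$ strictly increasing with $U\to+\infty$ at least linearly.

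Since $U$ is a strictly increasing change of variable, I would recast the $C$-equation against $U$: the factors of $V$ cancel and $\frac{dC}{dU}=\frac{4\ve}{5-m}\frac{a'(\ve U)}{a(\ve U)}\,C = \frac{4}{5-m}\frac{d}{dU}\big(\ln a(\ve U)\big)\,C$, a separable equation integrating to $C = K_0\, a(\ve U)^{4/(5-m)}$. Evaluating at the datum $U(-T_\ve)=-v_0T_\ve$, for which $\ve U(-T_\ve)=-\ve^{-1/100}$ by (\ref{Te}), the exponential flatness $0<a(r)-1\leq Ke^{\ga r}$ from (\ref{ahyp}) gives $a(\ve U(-T_\ve))=1+O(e^{-\ga\ve^{-1/100}})$, hence $K_0=1+O(\ve^{10})$, the exponential beating any fixed power of $\ve$.

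The closed form then finishes everything. Since $a(\ve U)\in(1,2)$, the expression $C=K_0\, a(\ve U)^{4/(5-m)}$ is bounded on $[-T_\ve,T^*)$; boundedness of $C$, and hence of $V$ via (\ref{CV}), together with the smoothness of the vector field rules out finite-time blow-up, so $T^*=+\infty$ and the solution is global on $[-T_\ve,\infty)$, with $C$ bounded, positive and increasing as claimed. For the asymptotics I let $U\to+\infty$: by (\ref{ahyp}) $a(\ve U)\to 2$, so $C\to K_0\,2^{4/(5-m)}=c_\infty(1+O(\ve^{10}))$ by the definition (\ref{l0}) of $c_\infty$; feeding this into (\ref{CV}) gives $V^2\to v_0^2+4\la_0(c_\infty-1)=v_\infty^2$, that is $V\to v_\infty(1+O(\ve^{10}))$, while $U\to+\infty$ was already shown. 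The only point requiring genuine care is the quantitative estimate $K_0=1+O(\ve^{10})$: it is exactly here that the choice of interaction time $T_\ve$ in (\ref{Te}), balanced against the decay rate $\ga$ in (\ref{ahyp}), is used to make the initial normalization trivial up to any prescribed power of $\ve$. Everything else reduces to a routine separable integration and a standard continuation argument.
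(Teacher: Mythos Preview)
Your proof is correct. The paper does not actually prove this lemma here; it is quoted from \cite{Mu1} (Lemma 3.4 there), so there is no in-paper proof to compare against in detail. That said, your approach---deriving the conserved quantity (\ref{CV}) by direct differentiation, reparametrizing by $U$ via $U'=V>0$ to obtain the separable equation $\frac{dC}{dU}=\frac{4}{5-m}\frac{d}{dU}\ln a(\ve U)\cdot C$, integrating to $C=K_0\,a(\ve U)^{4/(5-m)}$, and reading off global existence, monotonicity, boundedness and the asymptotics from this closed form---is precisely the natural route and is consistent with how the lemma is used elsewhere in the paper (compare the identical manipulation in the proof of Lemma~\ref{Key}, equation (\ref{csol})). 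The estimate $K_0=1+O(\ve^{10})$ from $\ve U(-T_\ve)=-\ve^{-1/100}$ and the exponential flatness in (\ref{ahyp}) is the right mechanism for the $O(\ve^{10})$ error in the asymptotic values.
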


\medskip

We remind to the reader some notation introduced in \cite{Mu1}. Let $t\in [-T_\ve, \tilde T_\ve]$, $Q_c$ given in (\ref{(3)}),  $c(t)>0$ and $v(t), \rho(t), \ga(t)\in \R$ be bounded functions to be chosen later, and
\be\label{defALPHA}
    y:=x-\rho(t), \quad   \tilde R(t,x): = \frac{Q_{c(t)}(y)}{\tilde a(\ve \rho(t))}e^{i\Theta(t,x)},
\ee
where
\be\label{param0}
\tilde a := a^{\frac 1{m-1}}, \quad \Theta (t,x) := \int_0^t c(s) ds  + \frac 12 v(t)x -\frac 14 \int_0^t v^2(s) ds + \ga(t).  
\ee
The parameter $\tilde a$ describes the shape variation of the soliton along the interaction. Concerning the parameters $c(t), v(t), \rho(t)$ and $\ga(t)$, it is assumed that, for all $t\in [-T_\ve, \tilde T_\ve]$,
\be\label{r1}
|c(t)-C(t)| +|v(t)-V(t)| + |\ga'(t)-H'(t)|+ |\rho'(t) -P'(t) |\leq \ve^{1/100}.
\ee
with $(C(t),V(t),P(t),H(t))$ from Lemma \ref{ODE}. Let
\begin{equation}\label{defv} 
    \tilde u(t,x) := \tilde R(t,x)+ w(t,x),
\end{equation}
where the correction is given by
\be\label{defW}
w(t,x):=\sum_{k=1}^3 \ve^k [A_{k,c} (t, y) + i  B_{k,c}(t, y) ] e^{i\Theta}, 
\ee
where $A_{k,c}$ and $B_{k,c}$ are unknown real valued functions to be determined. More precisely, given $k=1,2$ or $3$, we look for functions $(A_{k,c}(t, y), B_{k,c}(t, y))$ such that for all $t\in [-T_\ve, \tilde T_\ve]$ and for some fixed constants $K,\mu>0$,
\be\label{IP}
 \|A_{k,c}(t, \cdot) \|_{H^1(\R)} + \|B_{k,c}(t, \cdot)\|_{H^1(\R)} \leq K e^{-\mu\ve |\rho(t)|}, \quad  A_{k,c}(t, \cdot ), B_{k,c}(t, \cdot )\in \mathcal S(\R).
\ee
(here $\mathcal S(\R)$ is the standard Schwartz class). We want to estimate the size of the error obtained by inserting $\tilde u$ as defined in (\ref{defv})-(\ref{defW}) in the equation (\ref{aKdV}). For this, we define the residual term
\be\label{2.2bis}
S[\tilde u](t,x) := i\tilde u_t + \tilde u_{xx} + a(\ve x) |\tilde u|^{m-1} \tilde u.
\ee
For this quantity one has the following improved decomposition.

\begin{prop}\label{prop:decomp}
Let $(c(t),v(t),\rho(t),\ga(t))$  be satisfying (\ref{r1}). There are unique functions $A_{k,c}=A_{k,c}(t,y)$ and $B_{k,c}=B_{k,c}(t,y)$, of the form (\ref{IP}) such that $\tilde u(t)$ defined in (\ref{defv})-(\ref{defW}) satisfies, for every $t\in [-T_\ve, \tilde T_\ve]$, the following:
\be\label{S2}
 S[\tilde u](t,x)   =  \mathcal F_0(t, y)  e^{i\Theta} + \tilde S[\tilde u](t,x), 
\ee
where
\ben
\item $\mathcal F_0$ is an approximate dynamical system: 
\bea\label{F0mod}
 \mathcal F_0(t, y) & :=&  - \frac 12(v'(t) - \ve f_1(t) -\ve^3 f_5(t)) y\tilde u + i( c'(t)- \ve f_2(t) -\ve^3f_6(t)) \partial_c \tilde u \nonu \\
& &\quad  - (\ga'(t) + \frac 12 v'(t)\rho (t) -\ve^2 f_3(t)) \tilde u +  i(\rho'(t) -v(t) -\ve^2 f_4(t))\partial_\rho \tilde u,
\eea 
and $\partial_\rho \tilde u := \partial_\rho \tilde R - w_y$.
\item The parameters $f_j$, $j=1,\ldots, 6$, are smooth, time dependent functions, more specifically depending on the parameters $c(t),\rho(t),v(t)$ and $\ga(t)$. Indeed,  $f_1$ and $ f_2$ are given by
\be\label{f1f2}
 f_1 :=\frac{8 a'(\ve \rho) c}{(m+3)a(\ve \rho)},  \qquad f_2  :=\frac{4 a'(\ve \rho) c v}{(5-m)a(\ve \rho)}, \qquad \hbox{(compare with (\ref{f1})),}
\ee
and there are unique\footnote{The uniqueness is related to the obtention of estimate (\ref{4p5}), see Lemma \ref{LOOO} for a detailed description.} coefficients $\al_j, \beta_j,\delta_j,\nu_j \in \R$ such that 
\be\label{f3}
 f_3  :=  ( \al_1  + \al_2 \frac{v^2}{c} ) \frac{a'' }{a}(\ve \rho)  + (  \al_3  + \al_4 \frac{v^2}{c} )\frac{a'^2}{a^2}(\ve \rho) , % \quad \al_k := \frac 1{2\theta M[Q]} \int_\R \Lambda Q F_2^k,
\ee
\be\label{f4}
f_4 := \frac{v}{c} \big[ \beta_1 \frac{a'' }{a}(\ve \rho) +\beta_2 \frac{a'^2 }{a^2 }(\ve \rho)\big], %\quad \beta_k := -\frac 1{M[Q]} \int_\R yQ G_2^k,
\ee
\be\label{f5}
f_5 := \delta_1 \frac{a^{(3)} }{a}(\ve \rho)+ (\delta_2 + \delta_3 \frac{v^2}c) \frac{a'a''}{a^2}(\ve \rho) + (\delta_4 + \delta_5 \frac{v^2}c)\frac{a'^3}{a^3}(\ve \rho)  ,
\ee
and
\be\label{f6}
f_6 :=  -\frac{4c f_4 a'(\ve \rho)}{(5-m)a(\ve \rho)}  + v \Big[ (\eta_1 +\frac{v^2}{c}\eta_2 ) \frac{a'a''}{a^2}(\ve \rho)+ (\eta_3 +\frac{v^2}{c} \eta_4) \frac{a'^3}{a^3}(\ve \rho) \Big].
\ee
\item Finally, $(A_{k,c}, B_{k,c})$ satisfy (\ref{IP}) for $k=1, 2$ and $3$, and 
\be\label{4p5}
\|\tilde S[\tilde u](t)\|_{H^1(\R)}\leq K\ve^4 (e^{-\ve\mu |\rho(t)|} +  \ve),
\ee
uniformly in time.
\een
\end{prop}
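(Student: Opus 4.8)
The plan is to construct $\tilde u$ by a systematic order-by-order expansion in powers of $\ve$, inserting the ansatz (\ref{defv})--(\ref{defW}) into the residual (\ref{2.2bis}) and collecting terms according to their $\ve$-degree. First I would compute $S[\tilde R]$ for the leading profile alone: since $Q_{c}$ solves the stationary equation $Q_c'' - cQ_c + Q_c^m = 0$, the leading term $S[\tilde R]$ cancels exactly in the constant-coefficient part, and the surviving contributions come from (i) the time-dependence of the modulation parameters $c'(t),v'(t),\rho'(t),\ga'(t)$, which produce the generators $\partial_c\tilde R$, $y\tilde R$, $\partial_\rho\tilde R$ and $\tilde R$ appearing in $\mathcal F_0$; and (ii) the slow variation of the potential, expanding $a(\ve x) = a(\ve\rho) + \ve a'(\ve\rho)\,y + \tfrac12\ve^2 a''(\ve\rho)\,y^2 + \cdots$ in the variable $y = x-\rho(t)$. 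The normalization $\tilde R = Q_{c}/\tilde a(\ve\rho)\cdot e^{i\Theta}$ with $\tilde a = a^{1/(m-1)}$ is chosen precisely so that the $O(1)$ piece of $a(\ve x)|\tilde R|^{m-1}\tilde R$ reconstructs $Q_{c}^m/\tilde a$ after factoring the phase.

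Next I would proceed inductively. At each order $k=1,2,3$, collecting the coefficient of $\ve^k e^{i\Theta}$ forces a pair of linear equations for $(A_{k,c},B_{k,c})$ of the schematic form $\Lm B_{k,c} = (\text{known real source})$ and $\Lp A_{k,c} = (\text{known real source})$, where $\Lp,\Lm$ are the two scalar components of the linearized NLS operator around $Q_c$. The crucial structural fact for NLS — emphasized in the paper's footnote contrasting it with gKdV — is that $\Lp$ and $\Lm$ are invertible on the orthogonal complements of their (explicitly known) kernels, and map Schwartz sources to Schwartz solutions decaying like $e^{-\mu\ve|\rho|}$; this is what yields the decay estimate (\ref{IP}). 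The solvability conditions — orthogonality of each source to the kernel elements $Q_c$, $Q_c'$, $\partial_c Q_c$, and $yQ_c$ — are exactly what determine the dynamical coefficients $f_1,\dots,f_6$: the free parameters $\al_j,\beta_j,\delta_j,\nu_j,\eta_j$ are fixed uniquely by demanding that the sources at orders $2$ and $3$ lie in the range of the operators, i.e. by imposing the Fredholm conditions. I would organize this so that $f_1,f_2$ emerge at order $\ve^1$ (matching (\ref{f1})), $f_3,f_4$ at order $\ve^2$, and $f_5,f_6$ at order $\ve^3$, with the explicit dependence on $a'/a$, $a''/a$, $(a'/a)^2$, $a^{(3)}/a$, etc., forced by the Taylor coefficients of the potential. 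The specific term $-\tfrac{4cf_4 a'}{(5-m)a}$ in (\ref{f6}) should appear as the feedback of the second-order translation defect $f_4$ into the third-order scaling equation, transmitted through the coupling between $A$ and $B$ components.

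The main obstacle I anticipate is the careful bookkeeping at order $\ve^3$, where one must verify both the solvability of the order-$3$ system and, simultaneously, that the coefficients $\al_j,\beta_j,\delta_j,\eta_j$ can be chosen to \emph{absorb} all resonant (non-decaying or secularly growing) pieces into the modulation equations so that the genuine remainder $\tilde S[\tilde u]$ is pushed to $O(\ve^4)$. Concretely, the four orthogonality conditions at each order must be shown to be solvable for the four modulation corrections, which reduces to checking that a $4\times4$ matrix of inner products (involving $\langle Q_c, \partial_c Q_c\rangle$, $\langle Q_c', yQ_c\rangle$, and the analogous pairings) is nonsingular; the subcriticality $m<5$ and the sign of $\lambda_0 = (5-m)/(m+3)>0$ guarantee the relevant determinants do not vanish. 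Finally, to reach the bound (\ref{4p5}) I would estimate $\tilde S[\tilde u]$ term-by-term: the $e^{-\ve\mu|\rho(t)|}$ contribution collects the fourth-order Taylor remainder of $a$ localized near the soliton core, while the lone extra factor $\ve$ (giving the $\ve^5$ piece) accounts for the slow time-derivatives $\partial_t$ hitting the $\ve\rho$-dependence of the coefficients, each of which carries an additional $\ve$. The uniqueness of $(A_{k,c},B_{k,c})$, asserted in the footnote to (\ref{f3}), then follows from imposing that each correction be orthogonal to the kernel of $\Lp$ (resp. $\Lm$), which is what makes the inverse well-defined and is tied, as the paper notes, to obtaining the sharp estimate (\ref{4p5}).
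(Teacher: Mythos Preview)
Your proposal follows essentially the same route as the paper: decompose $S[\tilde u] = S[\tilde R] + \mathcal L[w] + \tilde N[w]$, Taylor-expand $a(\ve x)$ about $\ve\rho$, collect terms order by order in $\ve$, and at each order $k=1,2,3$ solve the pair $\mathcal L_+ A_{k,c}=F_k$, $\mathcal L_- B_{k,c}=G_k$, fixing the $f_j$'s by the Fredholm solvability conditions. That is exactly the paper's argument.

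One point in your write-up is misstated and would cause trouble in execution. You speak of four orthogonality conditions per order, sources orthogonal to ``$Q_c$, $Q_c'$, $\partial_c Q_c$, $yQ_c$'', and a $4\times 4$ nondegeneracy matrix. In fact the kernels of $\mathcal L_+$ and $\mathcal L_-$ are each \emph{one}-dimensional (spanned by $Q_c'$ and $Q_c$ respectively), so there are only \emph{two} solvability conditions at each order: $\int_\R F_k\,Q_c'=0$ and $\int_\R G_k\,Q_c=0$. These two conditions determine exactly two modulation coefficients per order ($f_1,f_2$ at order~$1$; $f_3,f_4$ at order~$2$; $f_5,f_6$ at order~$3$). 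The elements $\Lambda Q_c$ and $yQ_c$ are not kernel vectors but generalized ones ($\mathcal L_+\Lambda Q_c=-Q_c$, $\mathcal L_-(yQ_c)=-2Q_c'$); they enter only when you compute the resulting coefficients, via $\int_\R \Lambda Q_c\,Q_c\neq 0$ and $\int_\R yQ_c\,Q_c'\neq 0$ (this is where subcriticality $m<5$, i.e.\ $\theta>0$, matters). The remaining two orthogonalities in (\ref{OO})--(\ref{OO2}), namely $\int A_{k,c}Q_c=\int B_{k,c}Q_c'=0$, are not Fredholm conditions but follow automatically from the parity of the sources ($F_k$ odd/even, $G_k$ even/odd alternately), which the paper tracks explicitly. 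If you rewrite your bookkeeping with this $2$-by-$2$ structure rather than $4$-by-$4$, the rest of your outline goes through as written.
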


\begin{rem}
Some of the coefficients $\al_j, \beta_j$ have been explicitly computed in  \cite{Mu1}. Note that our notation slightly differs from that of \cite{Mu1}. Later, in Section \ref{AAA}, we will compute the remaining parameters.
\end{rem}
%\bea\label{F3}
%F_3 &  := &   \frac{a^{(3)}(\ve \rho)}{6 \tilde a^{m} (\ve \rho) }y^3 Q_c^m  -\frac{f_5}{2\tilde a(\ve\rho)} yQ_c  + \frac{m a''}{2a} y^2 Q_c^{m-1} A_{1,c}    -  f_3  A_{1,c}  + f_4(B_{1,c})_y \nonu\\
% & &  +\frac {ma'}{a}Q_c^{m-1}yA_{2,c}  -\frac 12 f_1 yA_{2,c} - [\frac 1\ve (B_{2,c})_t +f_2 \Lambda B_{2,c}] \nonu\\
% & &  + \frac 12  (m-1) \frac{a'(\ve \rho)}{\tilde a^{m-2} (\ve\rho)} yQ_c^{m-2} ( mA_{1,c}^2 + B_{1,c}^2 ) \nonu\\
%& & +\frac 1{6} (m-1)  \tilde a^2 (\ve\rho) Q_c^{m-3} \big\{  m(m-2)A_{1,c}^3 + 3(m-3) A_{1,c}B_{1,c}^2\big\};
%\eea
%\bea\label{G3}
%G_3  & := &  \frac{f_6}{\tilde a(\ve\rho)} \Lambda Q_c +  f_4 \frac{\tilde a'(\ve \rho)}{\tilde a^2(\ve\rho)} Q_c + \frac{ a''}{2a} y^2 Q_c^{m-1} B_{1,c}  -  f_3 B_{1,c}   - f_4(A_{1,c})_y \nonu\\
%& &  + [\frac 1\ve (A_{2,c})_t +f_2 \Lambda A_{2,c}] +\frac{a'}{a}Q_c^{m-1}yB_{2,c} -\frac 12 f_1 yB_{2,c} \nonu\\
%& & +    (m-1) \Big[ \frac{a'(\ve \rho)}{\tilde a^{m-2} (\ve\rho)} yQ_c^{m-2}  A_{1,c}B_{1,c} +\frac 1{2}  \tilde a^2 (\ve\rho) Q_c^{m-3} ((m-2)A_{1,c}^2B_{1,c} + B_{1,c}^3) \Big].
%\eea
%
In order to maintain the continuity of the argument, we have preferred to prove Proposition \ref{prop:decomp} in Section \ref{33}.

\medskip

From (\ref{4p5}), we have an estimate at the fourth order in $\ve$ for the associated error of the approximate solution $\tilde u$. It turns out that with this new estimate we can prove an improved version of \cite[Proposition 3.10]{Mu1}, after following step by step the lines of that proof. We claim:

\begin{prop}\label{prop:I} Let $m=3$ or $m\in [4,5)$. There exist $K_0,\ve_0>0$ such that the following holds for all $0<\ve <\ve_0$. There are  $C^1$-functions $c,v, \rho, \ga :[-T_\ve,\tilde T_\ve ]\rightarrow \R$ such that, for all $t\in [-T_\ve, \tilde T_\ve]$, one has
\be\label{INT41a}
\|u(t) -\tilde u(t; c(t), v(t), \rho(t), \ga(t)) \|_{H^1(\R)} \leq K_0 \ve^{3},
\ee
\be\label{vv}
 |\rho'(t) -v(t) -\ve^2 f_4(t) |+ |\ga'(t) -\frac 12 v'(t)\rho(t) -\ve^2 f_3(t) |\leq K_0 \ve^{3},
\ee
\be\label{cc}
|v'(t) - \ve f_1(t) - \ve^3 f_5(t)|+|c'(t) - \ve f_2(t) -\ve^3 f_6(t)| \leq K_0  \ve^{4},
\ee
and for $K>0$ independent of $K_0$,
\be\label{mTe}
|c(-T_\ve) -1| +|v(-T_\ve) -v_0| \leq K\ve^{10}.
\ee
Finally,
\be\label{r2}
|c(t) -C(t)| + |v(t)-V(t)| + |\rho'(t) -U'(t)| + |\ga'(t)-H'(t)| \leq KK_0 \ve^3.
\ee
\end{prop}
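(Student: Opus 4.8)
The plan is to establish Proposition~\ref{prop:I} by a modulation/bootstrap argument that upgrades \cite[Proposition 3.10]{Mu1} using the sharper residual estimate (\ref{4p5}). First I would set up the decomposition: for the exact solution $u(t)$ of (\ref{aKdV}) defined in Theorem~\ref{MT}, I would introduce time-dependent modulation parameters $(c(t),v(t),\rho(t),\ga(t))$ through four orthogonality conditions imposed on the difference $\eta(t) := u(t) - \tilde u(t; c(t),v(t),\rho(t),\ga(t))$, matched to the four null directions of the linearized NLS operator around $\tilde R$ (namely, the generators of scaling, boosts, translations and phase). The implicit function theorem, applied as long as $\|\eta(t)\|_{H^1(\R)}$ stays small, guarantees that these parameters exist, are $C^1$, and satisfy (\ref{r1}) on a maximal interval. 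The backward condition (\ref{mTe}) is then fixed by the asymptotic behavior (\ref{Minfty})--(\ref{expode}) of $u(t)$ as $t\to-\infty$, where $u(t)$ is genuinely close to the bare soliton of scaling $1$ and velocity $v_0$, so the initialization matches Lemma~\ref{ODE} up to $O(\ve^{10})$.

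Next I would derive the modulation equations. Writing the equation satisfied by $\eta(t)$, one has $i\eta_t + \eta_{xx} + a(\ve x)(|\tilde u + \eta|^{m-1}(\tilde u + \eta) - |\tilde u|^{m-1}\tilde u) = -S[\tilde u](t)$, where $S[\tilde u]$ is the residual (\ref{2.2bis}). Substituting the decomposition (\ref{S2}) and pairing with the four orthogonality directions kills the leading quadratic-form contributions and forces the coefficients of $\mathcal F_0$ in (\ref{F0mod}) to be controlled by $\tilde S[\tilde u]$ together with the nonlinear-in-$\eta$ terms; this is exactly what produces the dynamical laws (\ref{vv})--(\ref{cc}). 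The key point is that the velocity and scaling laws (\ref{cc}) are accurate to order $\ve^4$ precisely because (\ref{4p5}) provides an $O(\ve^4)$ residual bound, whereas the translation and phase laws (\ref{vv}) are only accurate to $\ve^3$ — the mismatch being the standard loss of one power coming from the fact that the translation and phase directions pair against $\partial_\rho\tilde u$ and $\tilde u$ rather than against the faster-decaying scaling/boost directions. This is where the improvement over (\ref{rhoplus})--(\ref{cplus}) genuinely enters, and where the new corrections $f_5,f_6$ of (\ref{f5})--(\ref{f6}) appear in the laws.

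The heart of the proof is the bootstrap estimate (\ref{INT41a}). I would run an energy-virial functional argument adapted to NLS: build a coercive Lyapunov functional $\mathcal{N}(t)$ equivalent to $\|\eta(t)\|_{H^1(\R)}^2$ modulo the four orthogonality directions (coercivity of the linearized NLS operators $\Lp,\Lm$ on the orthogonal complement being the standard spectral input), and estimate its time derivative. The residual contributes a source term bounded by $\|\tilde S[\tilde u]\|_{H^1}\|\eta\|_{H^1} \lesssim \ve^4(e^{-\ve\mu|\rho|}+\ve)\|\eta\|_{H^1}$ via (\ref{4p5}), the modulation-parameter errors contribute terms controlled by the right-hand sides of (\ref{vv})--(\ref{cc}), and the nonlinear remainder is cubic-or-higher in $\eta$. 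Using the sign of the momentum monotonicity (\ref{dPa}) to absorb the linear-in-time growth over the interaction window $[-T_\ve,\tilde T_\ve]$, and integrating the exponentially localized source $e^{-\ve\mu|\rho(t)|}$ in time (which yields an $O(\ve^3)$ contribution after the change of variables $dt \approx d\rho/v_0$, since $\int_\R e^{-\ve\mu|\rho|}\,\ve\,dt \lesssim \ve^2$ and one gains from the $\ve^4$ prefactor), I would close the bootstrap at the level $\|\eta(t)\|_{H^1}\lesssim \ve^3$, consistent with (\ref{INT41a}). Finally, (\ref{r2}) follows by Gronwall-comparing the modulation system (\ref{vv})--(\ref{cc}) against the reference ODE (\ref{c}) of Lemma~\ref{ODE}, the $O(\ve^3)$ gap being the integrated effect of the $\ve^3 f_5,\ve^3 f_6$ corrections and the $\ve^2 f_4,\ve^2 f_3$ terms over a time of length $O(\ve^{-1-1/100})$.

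The main obstacle I expect is the simultaneous control of the bootstrap constant: because the residual bound (\ref{4p5}) degrades from $\ve^4 e^{-\ve\mu|\rho|}$ to $\ve^5$ in the far region where $e^{-\ve\mu|\rho|}$ ceases to help, one must verify that the non-localized $\ve^5$ tail, integrated over the full window of length $T_\ve \sim \ve^{-1-1/100}$, still produces only an $O(\ve^3)$ contribution to $\|\eta\|_{H^1}$ — this forces the precise choice of the exponent $1/100$ in (\ref{Te}) and requires keeping the constants $K$ and $K_0$ strictly separated, as emphasized in the statement (with $K$ independent of $K_0$ in (\ref{mTe}) and (\ref{r2})). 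The delicate balancing of these two powers of $\ve$ against the length $T_\ve$, while maintaining coercivity of the linearized operators uniformly along the slowly-varying profile, is the technically sharpest point; everything else is a faithful, if lengthy, transcription of the scheme of \cite[Proposition 3.10]{Mu1} with the improved residual.
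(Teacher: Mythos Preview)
Your proposal is correct and matches the paper's approach: the paper itself gives no independent proof of Proposition~\ref{prop:I}, stating only that ``with this new estimate we can prove an improved version of \cite[Proposition 3.10]{Mu1}, after following step by step the lines of that proof,'' and your outline (modulation via four orthogonality conditions, coercivity of $\mathcal L_\pm$, energy/bootstrap control of $\eta$ fed by the residual bound (\ref{4p5}), then Gronwall comparison with (\ref{c}) for (\ref{r2})) is precisely that scheme. One small caveat: the momentum monotonicity (\ref{dPa}) is used in \cite{Mu1} chiefly for the post-interaction asymptotic stability (your Step~2 in Section~\ref{5}), not inside the interaction window $[-T_\ve,\tilde T_\ve]$, where the closure comes directly from integrating the localized source $\ve^4 e^{-\ve\mu|\rho|}$ (giving $O(\ve^3)$) and the flat tail $\ve^5$ over a window of length $O(\ve^{-1-1/100})$ (giving $O(\ve^{4-1/100})$), exactly as you compute.
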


\begin{rem}
Note that, compared with estimates (3.54)-(3.55) in \cite{Mu1}, now the terms $\ve^2 f_3(t)$, $\ve^2 f_4(t)$, $\ve^3 f_5(t)$ and $\ve^3 f_6(t)$ are dynamically nontrivial, compared with the error on the right hand side. 
\end{rem}
\begin{rem}
Note that estimates (\ref{r2}) improve (\ref{r1}). In addition, (\ref{mTe}) are consequences of  (\ref{expode}) at time $-T_\ve$, and (\ref{c}). 
\end{rem}

Let us introduce new limiting scaling and velocities, which will differ from the expected ones. We define
\be\label{tilcv}
\tilde c_\infty := c(\tilde T_\ve), \quad\hbox{ and }\quad \tilde v_\infty := v(\tilde T_\ve).
\ee
The key result of this paper is the following
\begin{lem}[Existence of a defect]\label{Key}
There are $\kappa_0,\ve_0>0$ such that, for all $0<\ve<\ve_0$, one has
\be\label{Key1}
\tilde c_\infty =c_\infty + O(\ve^{10}) , \qquad  |\tilde v_\infty -v_\infty| \geq \kappa_0 \ve^2,
\ee
provided $v_0\neq \tilde v_0$, for some $\tilde v_0\geq 0$.
\end{lem}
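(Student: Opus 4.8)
The plan is to integrate the refined dynamical system of Proposition \ref{prop:I} over the whole interaction window $[-T_\ve,\tilde T_\ve]$ and to read off the limiting scaling and velocity by comparing with the exactly integrable model $(C,V,U,H)$ of Lemma \ref{ODE}. First I would use the relation (\ref{CV}), $V^2 = v_0^2 + 4\la_0(C-1)$, which is an \emph{exact} conservation-type identity for the leading-order system, together with the asymptotics $C(t)\to c_\infty(1+O(\ve^{10}))$, $V(t)\to v_\infty(1+O(\ve^{10}))$. By (\ref{r2}) we already know $|c-C|+|v-V|\le KK_0\ve^3$, so the claim $\tilde c_\infty = c_\infty + O(\ve^{10})$ will follow almost directly once I control how far the true parameters $c,v$ drift from $C,V$ at the final time, using the initial data (\ref{mTe}) and the fact that the $f_5,f_6$ corrections only enter at order $\ve^3$ in $c',v'$. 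The scaling defect is therefore \emph{not} the source of inelasticity; it is absorbed into $O(\ve^{10})$.

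The heart of the matter is the velocity. The idea is that $V(t)$ and $v(t)$ obey the \emph{same} leading law $\ve f_1$, but $v(t)$ carries the genuine third-order correction $\ve^3 f_5(\ve t)$, while $V(t)$ does not. Subtracting the two equations and integrating, I expect
\be\label{defectintegral}
\tilde v_\infty - v_\infty = \ve^3 \int_{-T_\ve}^{\tilde T_\ve} f_5(\ve s)\, ds + (\text{lower order}),
\ee
and after the change of variables $\tau = \ve s$ this becomes $\ve^2 \int_{\R} f_5\,d\tau$ up to exponentially small tail errors coming from (\ref{IP}) and the asymptotic flatness (\ref{ahyp}). The whole point is then to show that this integral is bounded away from zero. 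Using the explicit form (\ref{f5}), $f_5$ is a fixed linear combination of $a^{(3)}/a$, $(a'a''/a^2)(1,\,v^2/c)$ and $(a'^3/a^3)(1,\,v^2/c)$ with universal coefficients $\delta_1,\dots,\delta_5$; along the trajectory $v^2/c$ is itself a smooth function of $\tau$ determined to leading order by (\ref{CV}). I would integrate the $a^{(3)}/a$ and $a'a''/a^2$ pieces by parts to rewrite everything in terms of integrals of $a'^3/a^3$ and $a'a''/a^2$ weighted by the (slowly varying, hence nearly constant on the fast scale) factors $1$ and $v^2/c$; the surviving quantity is a polynomial in the endpoint values $a\to 1,2$ plus a residual integral. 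The nonvanishing, and its sign, is what produces the threshold velocity $\tilde v_0$: the integral vanishes only for one exceptional value of the incoming velocity $v_0$, and the hypothesis $v_0\neq \tilde v_0$ excludes exactly that value. In the cubic case $m=3$ the extra symmetry forces $\tilde v_0 = 0$, so the integral never vanishes for $v_0>0$.

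The main obstacle I anticipate is precisely this last \emph{nonvanishing} computation: proving $|\int_\R f_5\,d\tau| \ge \kappa_0 > 0$ uniformly in $\ve$, rather than merely $O(\ve^2)$. Several contributions of the same magnitude must be shown not to cancel, and this requires either the explicit integration-by-parts reduction above (exploiting that $a'\to 0$ at both ends so all boundary terms drop) or an exact evaluation using the explicit profile (\ref{QQ}) to pin down the coefficients $\delta_j,\eta_j$. The delicate point is that $v^2/c$ varies across the interaction, so I cannot simply pull it out of the integral; I would instead freeze it via (\ref{CV}) and track the error in doing so. A secondary, more routine, difficulty is making the passage from the finite window $[-T_\ve,\tilde T_\ve]$ to the full line $\R$ in (\ref{defectintegral}) rigorous: the tail contributions are controlled by $Ke^{-\mu\ve|\rho|}$ from (\ref{IP}) together with $U(t)\to+\infty$, and since $T_\ve = v_0^{-1}\ve^{-1-1/100}$ the neglected tails are of size $e^{-\mu\ve^{-1/100}}\ll \ve^{10}$, hence harmless against the $\kappa_0\ve^2$ lower bound.
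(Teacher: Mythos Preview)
Your plan contains a genuine gap in the velocity computation. The formula
\[
\tilde v_\infty - v_\infty \;=\; \ve^3\!\int_{-T_\ve}^{\tilde T_\ve} f_5(s)\,ds + (\text{lower order})
\]
is incorrect: there are other contributions of the \emph{same} order $\ve^2$ that you have dropped. When you subtract $v'-V'=\ve\bigl(f_1(c,\rho)-f_1(C,U)\bigr)+\ve^3 f_5+O(\ve^4)$, the first bracket is \emph{not} lower order. Indeed $\rho'-U'=(v-V)+\ve^2 f_4+O(\ve^3)$, so integrating over $[-T_\ve,t]$ gives $\rho-U=O(\ve)$; since $\partial_\rho f_1=O(\ve)$, the term $\ve\,\partial_\rho f_1\,(\rho-U)$ is $O(\ve^3)$ and, after integration in time over an interval of length $\sim\ve^{-1}$, produces an $O(\ve^2)$ contribution to $\tilde v_\infty-v_\infty$. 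An analogous feedback from $c-C$ (driven by $\ve^3 f_6$) also enters at order $\ve^2$. The paper avoids this coupled linearization entirely: it integrates $(v^2)'$ and $(\log c)'$ directly, using that $\ve f_1\rho'$ and $\ve f_2\rho'/c$ are exact derivatives of known functions of $\ve\rho$. This produces $v^2(t)=v_0^2+4\la_0(c(t)-1)+\ve^2 k(t)+O(\ve^{3-})$, where the defect $k(\tilde T_\ve)$ involves not only $\int f_5 v$ but also $-\int f_1 f_4$ and a term coming from the scaling correction $h$. It is this full combination, not $\int f_5$ alone, whose nonvanishing yields the threshold $\tilde v_0$.

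Your treatment of the scaling identity $\tilde c_\infty=c_\infty+O(\ve^{10})$ is also too quick. The bound (\ref{r2}) only gives $|c-C|\le K\ve^3$, and the naive integration of $\ve^3 f_6$ over $[-T_\ve,\tilde T_\ve]$ would suggest an $O(\ve^2)$ defect in $\tilde c_\infty$, the same size as the velocity defect. That no such defect appears is a \emph{cancellation}, not a crude estimate: after the change of variables the integral $h(\tilde T_\ve)$ reduces (via integration by parts of the $a'a''/a^{q}$ and $a^{(3)}/a^q$ terms against $a'^3/a^{q+2}$) to a linear combination of the coefficients $\eta_j$ that vanishes identically. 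You need to carry this out; it is not automatic.
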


\begin{proof}
1. It is not to difficult to visualize that $|c(t)-C(t)|\leq K\ve^3$ (see \ref{r2}), (\ref{cc}) and Lemma \ref{ODE} imply the positivity of $c(t)$, uniformly in $\ve$. Additionally, from (\ref{vv})-(\ref{cc}), (\ref{f1f2}) and the boundedness properties of the functions $f_j(t)$,  one has
\bee
c'(t) & = & \ve f_2(t) + \ve^3 f_6(t) + O(\ve^4) = \frac{4\ve a'(\ve \rho(t)) c(t)}{(5-m)a(\ve \rho(t))} (\rho'(t) + \ve^2 f_4(t) + O(\ve^3)) + \ve^3 f_6(t) + O(\ve^4) \\
& =& c(t)  \frac{d}{dt}\big[Ê\log (a^{4/(5-m)} (\ve \rho(t))) \big] +  \ve^3\Big[\frac{4 a'(\ve \rho(t)) c(t)}{(5-m)a(\ve \rho(t))}  f_4(t) +  f_6(t) \Big] + O(\ve^4),
\eee
uniformly on $[-T_\ve, \tilde T_\ve]$. Dividing by $c(t)$ and integrating, we obtain
$$
 \log \frac{c(t)}{c(-T_\ve)}  =   \log \big( \frac{a^{4/(5-m)} (\ve \rho(t))}{a^{4/(5-m)} (\ve \rho(-T_\ve))} \big)  +  \ve^3 \int_{-T_\ve}^t \Big[\frac{4 a'(\ve \rho(s)) f_4(s) }{(5-m)a(\ve \rho(s)) }  +  \frac{f_6(s)}{c(s)} \Big]ds + O(\ve^{3-1/100}).
$$
From (\ref{mTe}) and (\ref{ahyp}), we have
\be\label{csol}
c(t) = a^{4/(5-m)} (\ve \rho(t)) + \ve^2 h(t) + O(\ve^{3-1/100}),
\ee
where
\be\label{ht}
h(t) := \ve a^{4/(5-m)} (\ve \rho(t)) \int_{-T_\ve}^t \Big[\frac{4 a'(\ve \rho(s)) f_4(s) }{(5-m)a(\ve \rho(s))}  +  \frac{f_6(s)}{c(s)} \Big]ds.
\ee
In Section \ref{B} we will prove that  
\be\label{posi}
h(\tilde T_\ve) \sim 0.
\ee
In particular,
$$
\tilde c_\infty = c(\tilde T_\ve) = a^{4/(5-m)} (\ve \rho(\tilde T_\ve))  + \ve^2 h(\tilde T_\ve) + O(\ve^{3-1/100}) = c_\infty + o(\ve^2),
$$ 
which proves the first case in (\ref{Key1}). 

\medskip

\noindent
2. In order to prove the second identity, note that
\bee
v'(t) v(t)& = & \ve f_1(t)v(t) + \ve^3 f_5(t)v(t) + O(\ve^4) \\
& =&  \ve f_1(t) (\rho'(t) -\ve^2 f_4(t) + O(\ve^3)) + \ve^3 f_5(t)v(t) + O(\ve^4)\\
& =& \frac{8 \ve a'(\ve \rho(t)) c(t)}{(m+3)a(\ve \rho(t))} \rho'(t) + \ve^3( -f_1(t) f_4(t) +  f_5(t)v(t)) + O(\ve^4)
\eee
uniformly on $[-T_\ve, \tilde T_\ve]$. Replacing (\ref{csol}) in the above identity, we get
\bee
\frac 12 (v^2(t))' & =  &   \frac{8 \ve a'(\ve \rho(t))  }{m+3} a^{\frac{m-1}{5-m}} (\ve \rho(t))   \rho'(t)  \\
& & +   \ve^3 \Big[ \frac{8a'(\ve \rho(t)}{(m+3)a(\ve \rho(t))} h(t) v(t) -f_1(t)  f_4(t)  + f_5(t)v(t) \Big] + O(\ve^{4-1/100}).
\eee
Therefore, after integration
\bea
 v^2(t) & = &  v_0^2  +  \frac{4 (5-m)}{m+3}  (a^{4/(5-m)} (\ve \rho(t)) -1) + \ve^2 k(t)  + O(\ve^{3-2/100})\nonu\\
 & =&  v_0^2  +  4\la_0  (c(t) -1) + \ve^2 k(t)  + O(\ve^{3-2/100}), \label{v2c}
\eea
with
$$
k(t) := 2 \ve \int_{-T_\ve}^t \Big[ \frac{8a'(\ve \rho(t))}{(m+3)a(\ve \rho(t))} h(s) v(s) -f_1(s)  f_4(s) + f_5(s)v(s) \Big]ds.
$$
Now we claim that, there is $\tilde v_0\geq 0$ (equals zero if $m=3$), such that, for all $v_0\neq \tilde v_0$, 
\be\label{kTe}
k(\tilde T_\ve) \sim \kappa_1 \neq 0,
\ee
and then
$$
\tilde v_\infty^2 = v^2(\tilde T_\ve)  = v_0^2 + 4\la_0(c_\infty-1)  +\ve^2 k(\tilde T_\ve) = v_\infty^2  +\ve^2 \kappa_1 + o(\ve^2),
$$
which proves the second assertion. The proof of the non degeneracy condition $ \kappa_1 \neq 0 $ is carried out in Section \ref{B}.  
\end{proof}

\bigskip

\section{Proof of the Main Theorems}\label{5}

\medskip

Finally, in this last section we prove Theorems \ref{MTL} and \ref{MTLsta}.  In order to simplify our arguments, we split the proof into several steps.

\medskip

\noindent
{\bf Step 1.  Behavior at $t=\tilde T_\ve$.} Following the argument described in subsection 3.7.4 of \cite{Mu1}, it is not difficult to conclude that 
$$
\big\|\tilde u(\tilde T_\ve; c(\tilde T_\ve),v(\tilde T_\ve),\rho(\tilde T_\ve),\ga(\tilde T_\ve))  - 2^{-1/(m-1)} Q_{c_\infty}(\cdot - \rho_\ve) e^{\frac i2(\cdot)\tilde v_\infty} e^{i \ga_\ve} \big\|_{H^1(\R)} \leq K \ve^{3},%\quad |\rho_1'(T_\ve)|\leq K_0 \ve^{1/2}.
$$
for some fixed $\ga_\ve \in \R$, $\rho_\ve :=\rho(\tilde T_\ve)$, and $K>0$ independent of $\ve$. Note that we have used (\ref{IP}), (\ref{tilcv}), the composition of $\tilde u$ given in (\ref{defv}), and the fact that $\rho_\ve $ satisfies
\be\label{INT42}
 \frac {99}{100}v_0 T_\ve \leq \rho_\ve \leq \frac {101}{100}(2v_\infty-v_0) T_\ve.
\ee
Therefore, from (\ref{INT41a}) and the previous estimate, one has
\be\label{INT41}
\big\|u(\tilde T_\ve)  - 2^{-1/(m-1)} Q_{c_\infty}(\cdot - \rho_\ve) e^{\frac i2(\cdot)\tilde v_\infty} e^{i\ga_\ve} \big\|_{H^1(\R)} \leq K \ve^{3},%\quad |\rho_1'(T_\ve)|\leq K_0 \ve^{1/2}.
\ee
Moreover, from Lemma \ref{Key}, there is $\kappa_0>0$, independent of $\ve$, such that 
\be\label{defect}
 |\tilde v_\infty -v_\infty| > \kappa_0 \ve^2.
\ee
We recall that this identity and (\ref{INT41}) imply that $\tilde v_\infty$ are different from $v_\infty$ by a quantity larger than the global error associated to the approximate solution.

\medskip

\noindent
{\bf Step 2. Propagation of the defect.} Now we prove that the defect is still present as $t\to +\infty$. The key idea is to use the stability of $Q$ to propagate the error (\ref{defect}). Indeed, from (\ref{INT41}), and using \cite[Proposition 2.3]{Mu1} with $p_m=3,$\footnote{A careful study of the proof of Proposition 2.3 in \cite{Mu1} shows that we can take $p_m=3$ with similar conclusion.} provided $\ve_0$ is taken smaller if necessary, we get the existence of a constant $K>0$ and $C^1$ modulation parameters  $\rho(t), \ga(t) \in \R$, defined in $[\tilde T_\ve,+\infty)$, and such that 
$$
w(t) = u(t) -  2^{-1/(m-1)}  Q_{c_\infty} (\cdot - \tilde v_\infty t -\rho(t))e^{\frac i2 (\cdot) \tilde v_\infty}  e^{i\ga(t)}
$$
satisfies, for all $t\geq \tilde T_\ve$,
\be\label{S}
\| w (t) \|_{H^1(\R)} +|\rho'(t)|  \leq K\ve^{3}.
\ee
From (\ref{defect}) and (\ref{S}), Theorem \ref{MTLsta} is proved.

\medskip

\noindent
{\bf Step 3. Conclusion.} Let us prove Theorem \ref{MTL}. By contradiction, let us assume that, for $\al>0$ small, there exist $ T>T_\ve$ very large, and parameters $\tilde \rho(t),\tilde \ga(t)\in \R$, defined for all $t\geq \tilde T_\ve$ large, such that 
$$
\tilde w(t,x) := u(t,x) - 2^{-1/(m-1)} Q_{c_\infty} (x- v_\infty t- \tilde \rho(t))e^{ixv_\infty/2} e^{i\tilde \ga(t)}
$$ 
satisfies
\be\label{T}
 \|\tilde w(T)\|_{H^1(\R)} \leq \al \ve^{2}.
\ee
Therefore, from (\ref{S}) and (\ref{T}), and the triangle inequality,
$$
\| Q_{c_\infty} (\cdot - v_\infty T- \tilde \rho(T))e^{\frac i2(\cdot )v_\infty} e^{i\tilde \ga(T)}  -  Q_{c_\infty} (\cdot - \tilde v_\infty T-\rho(T))e^{\frac i2 (\cdot) \tilde v_\infty}  e^{i\ga(T)}\|_{H^1(\R)} \leq K\ve^2 (\al+\ve),
$$
or
$$
\| Q_{c_\infty} (\cdot - (v_\infty-\tilde v_\infty) T- (\tilde \rho(T) -\rho(T)))e^{\frac i2(\cdot )v_\infty} e^{i(\tilde \ga(T) - \ga(T))}  -  Q_{c_\infty} e^{\frac i2 (\cdot) \tilde v_\infty}  \|_{H^1(\R)} \leq K\ve^2 ( \al +\ve).
$$
A simple argument shows that, for some constant $K>0$ independent of $\ve$, one has
$$
 |\tilde v_\infty -v_\infty|  + |(v_\infty-\tilde v_\infty) T+ \tilde \rho(T) -\rho(T)| + |\tilde \ga(T)-\ga(T)| \leq K\ve^2(\al+\ve), 
$$
otherwise the previous inequality does not hold.  By taking $\al>0$ smaller and $T$ larger if necessary, this result is in contradiction with (\ref{defect}). 

\bigskip

\section{Approximate solution revisited}\label{33}

\medskip

This section is devoted to the proof of Proposition \ref{prop:decomp}. 

\begin{proof}

We revisit the proof of  \cite[Proposition 3.3]{Mu1} and the resolution of the corresponding linear systems carried out in that paper. In what follows, we state the necessary modifications. First of all, one has
$$
S[\tilde u] = S[\tilde R] + \mathcal L [w]  + \tilde N[w], 
$$
where $S[\tilde R] = i\tilde R_t + \tilde R_{xx} +a(\ve x) |\tilde R|^{m-1} \tilde R$,
\be\label{Lw}
\mathcal L[w] := iw_t + w_{xx}  +  \frac{a(\ve x)}{2a(\ve\rho)} Q_c^{m-1}(y) [ (m+1) w + e^{2i\Theta}(m-1) \bar w],  
\ee
and
\be\label{tN}
\tilde N[w]  :=  a(\ve x) \Big\{ |\tilde R + w|^{m-1}(\tilde R + w) -|\tilde R|^{m-1} \tilde R  -  \frac{Q_c^{m-1}(y) }{2a(\ve\rho)} [ (m+1) w +  e^{2i\Theta}(m-1) \bar w] \Big\}.
\ee
In what follows, we compute these terms this time up to third order in $\ve$.

\medskip

\noindent
{\bf Step 1.} The first modification comes at the level of Claim 2, where a Taylor expansion up to fifth order gives  us
\be\label{eq:SQ34}
S[\tilde R]   =  \big[   F_0^R  + \ve F_1^R + \ve^2 F_2^R  +  \ve^3 F_{3}^R +\ve^4 F_{4}^R  \big] (t, y)  e^{i\Theta}  +\ve^5  f^R(t) F_c^R(y) e^{i\Theta},
\ee
where $F_0^R$ is given now by the expression
\bea
 F_0^R & := &      - \frac 12 (v' -  \ve f_1 -\ve^3 f_5) \frac {yQ_c}{\tilde a(\ve \rho)}   + i( c'- \ve f_2-\ve^3 f_6) \frac{\Lambda Q_c}{\tilde a(\ve \rho)}  \nonu \\
& &    -(\ga' + \frac 12 v'\rho  -\ve^2 f_3) \frac{Q_c}{\tilde a(\ve \rho)}   - i(\rho' -v -\ve^2 f_4) [  \frac{Q_c'}{\tilde a(\ve \rho)} - \frac{\ve \tilde a'(\ve \rho)}{\tilde a^2(\ve\rho)} Q_c] .  \label{F0R2}
\eea
$F_1^R$ and $F_2^R$ do not change. They are given by
\be\label{F1R}
 F_1^R(t, y) :=  \frac{a' (\ve \rho)}{\tilde a^m (\ve \rho)}yQ_c\big[ Q_c^{m-1} -\frac {4c}{m+3}\big] +i \frac{a' (\ve \rho) v }{\tilde a^m(\ve \rho)}\big[ \frac{4c}{5-m}\Lambda Q_c -\frac{1}{m-1}Q_c\big],
\ee 
\be\label{F2R}
 F_2^R(t,y)  :=  \frac{a''(\ve \rho)}{2 \tilde a^{m} (\ve \rho) }y^2 Q_c^m  -\frac{f_3}{\tilde a(\ve\rho)} Q_c -i \frac{f_4}{\tilde a(\ve\rho)} Q_c'.
\ee
The novelty is the term $F_3^R$, given by the expression
\be\label{F3R}
 F_3^R(t,y)  :=  \frac{a^{(3)}(\ve \rho)}{6 \tilde a^{m} (\ve \rho) }y^3 Q_c^m  -\frac{f_5}{2\tilde a(\ve\rho)} yQ_c + i \frac{f_6}{\tilde a(\ve\rho)} \Lambda Q_c +  if_4 \frac{\tilde a'(\ve \rho)}{\tilde a^2(\ve\rho)} Q_c.
\ee
It is not difficult to see that $\|F_4^R (t)\|_{H^1(\R)} \leq K e^{-\ve\mu|\rho(t)|}$. Finally, $\abs{f^R(t)}\leq K$, $ F_c^R \in \mathcal S$. Therefore, for every $t\in [-T_\ve, \tilde T_\ve]$,
$$
\| \ve^4 F_4^R (t,y) + \ve^5 f(t) F_c^R (y) \|_{H^1(\R)} \leq K\ve^4( e^{-\ve\mu|\rho(t)|} +\ve).
$$

\medskip

\noindent
{\bf Step 2.}
Now we consider the computation, up to third order in $\ve$, of \cite[Claim 3]{Mu1}, which deals with $\mathcal L[w]$, previously introduced in (\ref{Lw}). The computations are very similar. We get in this opportunity
\bee
 \mathcal L[w] & = &  -\sum_{k=1}^3 \ve^{k} \big[ \mathcal L_+ (A_{k,c})  + i \mathcal L_- (B_{k,c}) \big] e^{i\Theta}  -\frac12  ( v' - \ve f_1 -\ve^3 f_5) y w   \nonumber \\
& &  + i ( c' - \ve f_2-\ve^3 f_6) \partial_c w  -  (\ga' + \frac 12 v' \rho -\ve^2 f_3 )w -  i(\rho' -v -\ve^2 f_4) w_y \nonumber \\
& &  + \ve^2 [ F_{2}^{L}(t, y) +  iG_{2}^{L}(t, y)] e^{i\Theta}+ \ve^3 [ F_{3}^{L}(t, y) +  iG_{3}^{L}(t, y)] e^{i\Theta} + \ve^4 f^L(t)  F^{L}_c(y) e^{i\Theta}  .
\eee
Here, as already computed in \cite{Mu1},
\be\label{F2L}
 F_{2}^{L}(t, y) := m\frac{a' }{a}Q_c^{m-1}y A_{1,c} - \frac 12 f_1 y A_{1,c} - [\frac 1\ve (B_{1,c})_t + f_2 \Lambda B_{1,c}],
\ee
\be\label{G2L}
 G_{2}^{L}(t, y) :=  \frac 1\ve (A_{1,c})_t + f_2 \Lambda A_{1,c}  +\frac{a' }{a} Q_c^{m-1}y B_{1,c}  - \frac 12f_1y B_{1,c},
\ee
and the third order terms are
\bea\label{F3L}
 F_{3}^{L}(t, y) & := &   \frac{m a''}{2a} y^2 Q_c^{m-1} A_{1,c}    -  f_3  A_{1,c}  + f_4(B_{1,c})_y \nonu\\
 & &  +\frac {ma'}{a}Q_c^{m-1}yA_{2,c}  -\frac 12 f_1 yA_{2,c} - [\frac 1\ve (B_{2,c})_t +f_2 \Lambda B_{2,c}],
\eea
\bea\label{G3L}
 G_{3}^{L}(t, y)&  := &  \frac{ a''}{2a} y^2 Q_c^{m-1} B_{1,c}  -  f_3 B_{1,c}   - f_4(A_{1,c})_y + [\frac 1\ve (A_{2,c})_t +f_2 \Lambda A_{2,c}] \nonu\\
 & &   +\frac{a'}{a}Q_c^{m-1}yB_{2,c} -\frac 12 f_1 yB_{2,c} .
\eea
In addition, there exist $K, \mu>0$ such that
\be\label{34}
\| \ve^4 f^{L}(t)F^{L}_c e^{i\Theta}\|_{H^1(\R)}\leq K \ve^4(e^{-\ve\mu |\rho(t)|} +\ve) .
\ee

\medskip

\noindent
{\bf Step 3}. Finally, we consider the improvement of \cite[Claim 4]{Mu1}, namely the term $\tilde N[w]$ defined in (\ref{tN}). The computations here need more care, since several new terms appear. Following the proof in \cite{Mu1}, one has now the improved decomposition
\be\label{Ndec}
\tilde N[w] =  \ve^2 (N^{2,1} + i N^{2,2}) e^{i\Theta} +\ve^3 (N^{3,1} + i N^{3,2}) e^{i\Theta} + O_{H^1(\R)}( \ve^4 e^{-\ve\mu|\rho(t)|}),
\ee
with the previously known second order terms 
\be\label{N21}
N^{2,1}  :=  \frac 12 (m-1)\tilde a(\ve \rho)Q_c^{m-2} (mA_{1,c}^2 +B_{1,c}^2 ) , \quad  N^{2,2}  :=    (m-1) \tilde  a (\ve \rho)Q_c^{m-2}  A_{1,c}B_{1,c},
\ee
and the new, third order terms:
\bea\label{N31}
%N^{3,1}&  := &  \frac 12  (m-1) \frac{a'(\ve \rho)}{\tilde a^{m-2} (\ve\rho)} yQ_c^{m-2} ( mA_{1,c}^2 + B_{1,c}^2 ) \\
%& & +\frac 1{24} (m-1)(m-3) \tilde a^2 (\ve\rho) Q_c^{m-3} \big\{  (4m+1)A_{1,c}^3 +9 A_{1,c}B_{1,c}^2  \big\} \\
%& & +\frac 1{8}  (m+1)(m-1) \tilde a^2 (\ve\rho) Q_c^{m-3} \big\{  A_{1,c}^3 + A_{1,c}B_{1,c}^2  \big\};
N^{3,1}&  := & (m-1) Q_c^{m-3} \Big[ \tilde a (\ve\rho) Q_c (m A_{1,c} A_{2,c} + B_{1,c} B_{2,c})  +  \frac 12  \frac{a'(\ve \rho)}{\tilde a^{m-2} (\ve\rho)} yQ_c ( mA_{1,c}^2 + B_{1,c}^2 ) \nonu\\
& & +\frac 1{6}   \tilde a^2 (\ve\rho)  \big\{  m(m-2)A_{1,c}^3 + 3(m-2) A_{1,c}B_{1,c}^2\big\} \Big];
\eea
\bea\label{N32}
N^{3,2} & := &    (m-1) Q_c^{m-3} \Big[ \tilde a (\ve\rho) Q_c (A_{1,c} B_{2,c} + A_{2,c}B_{1,c}) \nonu\\
& & +  \frac{a'(\ve \rho)}{\tilde a^{m-2} (\ve\rho)} yQ_c A_{1,c}B_{1,c} +\frac 1{2}  \tilde a^2 (\ve\rho)  ((m-2)A_{1,c}^2B_{1,c} + B_{1,c}^3) \Big].
%& & +\frac 1{8}  (m-1) \tilde a^2 (\ve\rho) Q_c^{m-3}  ((m+1)A_{1,c}^2B_{1,c} + (m+1)B_{1,c}^3). 
\eea
Indeed, since $m=3$ or $m\in [4, 5)$, one has the following third order expansion of $\tilde N[w]$:
\bea\label{N2}
 \tilde N[w]  & = & \frac{(m-1)a(\ve x)}{2\tilde a^{m-2}} Q_c^{m-2}  \big\{  e^{i\Theta} |w|^2 + 2 \re (e^{i\Theta} \bar w) w  +     (m-3)e^{i\Theta} (\re (e^{i\Theta} \bar w))^2 \big\} \nonu \\ 
& & + (m+1)(m-1)(m-3) \frac{a(\ve x)}{48}\frac{Q_c^{m-3}}{\tilde a^{m-3}}\Big[ 3e^{2i\Theta} |w|^2 \bar w  +3|w|^2w +e^{-2i\Theta} w^3 +\frac{(m-5)}{(m+1)} e^{4i\Theta} \bar w^3 \Big]\nonu \\
& & + \frac 18 (m+1)(m-1) a(\ve x)\frac{Q_c^{m-3}}{\tilde a^{m-3}} |w|^2 w  + O_{H^1(\R)}(\ve^4 e^{-\ve \mu |\rho(t)|}).
\eea
It is easy to check that this term simplifies enormously  when $m=3$. However, we will consider the general case.

\medskip 
 
We replace $w$  in the above expression and we arrange the obtained terms according to the powers of $\ve$ and between real and imaginary parts. We perform this computation in several steps. First, note that
$$
a(\ve x) = a(\ve\rho) + \ve a'(\ve \rho) y + O(\ve^2 y^2). 
$$
On the other hand, from (\ref{defW}),
$$
|w|^2 = \ve^2( A_{1,c}^2 + B_{1,c}^2  )+ 2\ve^3(A_{1,c}A_{2,c} + B_{1,c}B_{2,c})+ O_{H^1(\R)}(\ve^4 e^{-\ve\mu |\rho(t)|}).
$$
Similarly $\re (e^{i\Theta} \bar w) = \ve A_{1,c} + \ve^2 A_{2,c} + \ve^3 A_{3,c}$. Therefore
\bee
\re (e^{i\Theta} \bar w) w  &  = &  \ve^2 (A_{1,c}^2  + iA_{1,c} B_{1,c}) e^{i\Theta} + \ve^3(2A_{1,c}A_{2,c} + i(A_{1,c}B_{2,c}+ A_{2,c}B_{1,c}))e^{i\Theta} \\
& & \qquad + O_{H^1(\R)}(\ve^4 e^{-\ve\mu |\rho(t)|}),
\eee
and
$$
e^{i\Theta}(\re (e^{i\Theta} \bar w))^2 = \ve^2  A_{1,c}^2e^{i\Theta}+ 2\ve^3  A_{1,c} A_{2,c}e^{i\Theta}  + O_{H^1(\R)}(\ve^4 e^{-\ve\mu |\rho(t)|}).
$$
On the other hand,
$$
|w|^2 w =\ve^3[ A_{1,c}^3 +  A_{1,c}B_{1,c}^2 + i (A_{1,c}^2B_{1,c} + B_{1,c}^3)]e^{i\Theta}  +O_{H^1(\R)}(\ve^4 e^{-\ve\mu |\rho(t)|}),
$$
and
$$
w^3 = \ve^3[ A_{1,c}^3 -  3A_{1,c}B_{1,c}^2 + i (3A_{1,c}^2B_{1,c} - B_{1,c}^3)]e^{3i\Theta}  +O_{H^1(\R)}(\ve^4 e^{-\ve\mu |\rho(t)|}).
$$
Collecting these expansions and replacing in (\ref{N2}) we obtain, after some simplifications,
\bee
\tilde N[w]  & = &   \frac 12 \ve^2 (m-1)\tilde a (\ve\rho) Q_c^{m-2} \big\{  mA_{1,c}^2 + B_{1,c}^2   + 2i A_{1,c}B_{1,c}  \big\}e^{i\Theta}  \\
& & + \ve^3 (m-1)\tilde a (\ve\rho) Q_c^{m-2} \big\{  m A_{1,c} A_{2,c} + B_{1,c} B_{2,c}   + i (A_{1,c} B_{2,c} + A_{2,c}B_{1,c})  \big\}e^{i\Theta} \\
& &   + \frac 12 \ve^3 (m-1) \frac{a'(\ve \rho)}{\tilde a^{m-2} (\ve\rho)} yQ_c^{m-2} \big\{  mA_{1,c}^2 + B_{1,c}^2   + 2i A_{1,c}B_{1,c}  \big\}e^{i\Theta}\\
& & +\frac 1{24} \ve^3 (m-1)(m-3) \tilde a^2 (\ve\rho) Q_c^{m-3} \big\{  (4m+1)A_{1,c}^3 +9 A_{1,c}B_{1,c}^2 + 3i (3A_{1,c}^2B_{1,c} - B_{1,c}^3) \big\}e^{i\Theta} \\
& & +\frac 1{8} \ve^3 (m+1)(m-1) \tilde a^2 (\ve\rho) Q_c^{m-3} \big\{  A_{1,c}^3 + A_{1,c}B_{1,c}^2 + i (A_{1,c}^2B_{1,c} + B_{1,c}^3) \big\}e^{i\Theta} \\
& & +  O_{H^1(\R)}(\ve^4 e^{-\ve\mu |\rho(t)|}). 
\eee
From the decomposition into real and imaginary parts, and additional simplifications, we get (\ref{N31})-(\ref{N32}), and finally (\ref{Ndec}).

\medskip

\noindent
{\bf Step 4. First conclusion.} Collecting the previous estimates, we get 
\be\label{S2add}
 S[\tilde u](t,x)    =   \big[ \mathcal F_0(t, y) +  \ve \mathcal F_1(t,y) +  \ve^2 \mathcal F_2(t, y) + \ve^3 \mathcal F_3(t, y)+ \ve^4 \mathcal F_4(t, y)  +   \ve^5 f(t) \mathcal F_c(y) \big] e^{i\Theta}.
\ee
The term $\mathcal F_0$ is defined in (\ref{F0mod}). In addition,
\be\label{F12}
\mathcal F_k(t, y)  :=  F_k(t, y) + i G_k(t, y)  - \big[  \mathcal L_+ (A_{k,c})  + i \mathcal L_- (B_{k,c}) \big],  \quad k=1,2,3;
\ee
with $F_1$, $G_1$ given by ($\Lambda Q_c := \partial_c Q_c$)
\be
F_1 :=  \frac{a' (\ve \rho)}{\tilde a^m (\ve \rho)}yQ_c\big[ Q_c^{m-1} -\frac {4c}{m+3}\big], \quad 
 G_1 := \frac{a' (\ve \rho) v }{\tilde a^m(\ve \rho)}\big[ \frac{4c}{5-m}\Lambda Q_c -\frac{1}{m-1}Q_c\big],\label{G1F1}
\ee 
\bea\label{F2}
F_2 & :=& \re \{F_2^R\} + F_2^L + N^{2,1}  \qquad \hbox{(cf. \eqref{F2R}, \eqref{F2L} and \eqref{N21})}\nonu\\
& =&  \frac{a''}{2 \tilde a^{m}  } y^2 Q_c^m + m\frac{a' }{a}Q_c^{m-1}y A_{1,c} - \frac 12 f_1yA_{1,c} - \frac 1\ve ( B_{1,c})_t - f_2 \Lambda B_{1,c} \nonu\\
& & \qquad +\frac 12 (m-1)\tilde a Q_c^{m-2} (mA_{1,c}^2 +B_{1,c}^2 )  - \frac{f_3(t)}{\tilde a} Q_c ,
%& & \qquad + a^{1/2}Q_c (3A_{1,c}^2 +B_{1,c}^2 ).
\eea
($\Lambda A_{1,c} := \partial_c A_{1,c}$ and so on) and  from (\ref{F2R})-(\ref{G2L}) and (\ref{N21}),
\bea\label{G2}
G_2 &  := &  \ima \{F_2^R\} + G_2^L + N^{2,2}  \nonu\\
& =&   \frac 1\ve (A_{1,c})_t + f_2 \Lambda A_{1,c}  +\frac{a' }{a}Q_c^{m-1} y B_{1,c} - \frac 12 f_1 y B_{1,c}   + (m-1) \tilde a  Q_c^{m-2}  A_{1,c}B_{1,c}  -  \frac{f_4}{\tilde a} Q_c'  .\nonu\\
& &  %+ 2  a^{1/2}Q_c  A_{1,c}B_{1,c};
\eea
The third order terms are new in the decomposition. They are given by the expressions
\bea\label{F3}
F_3 &  := &  \re \{F_3^R\} + F_3^L + N^{3,1}  \qquad \hbox{(cf. \eqref{F3R}, \eqref{F3L} and \eqref{N31})} \nonu\\
& =&  \frac{a^{(3)}(\ve \rho)}{6 \tilde a^{m} (\ve \rho) }y^3 Q_c^m  -\frac{f_5}{2\tilde a(\ve\rho)} yQ_c  + \frac{m a''}{2a} y^2 Q_c^{m-1} A_{1,c}    -  f_3  A_{1,c}  + f_4(B_{1,c})_y \nonu\\
 & &  +\frac {ma'}{a}Q_c^{m-1}yA_{2,c}  -\frac 12 f_1 yA_{2,c} - [\frac 1\ve (B_{2,c})_t +f_2 \Lambda B_{2,c}] \nonu\\
 & & +(m-1)\tilde a (\ve\rho) Q_c^{m-2}(m A_{1,c} A_{2,c} + B_{1,c} B_{2,c})   + \frac 12  (m-1) \frac{a'(\ve \rho)}{\tilde a^{m-2} (\ve\rho)} yQ_c^{m-2} ( mA_{1,c}^2 + B_{1,c}^2 ) \nonu\\
& & +\frac 1{6} (m-1)  \tilde a^2 (\ve\rho) Q_c^{m-3} \big\{  m(m-2)A_{1,c}^3 + 3(m-2) A_{1,c}B_{1,c}^2\big\},
\eea
and
\bea\label{G3}
G_3  & := &  \ima \{F_3^R\} + G_3^L + N^{3,2}  \qquad \hbox{(cf. \eqref{F3R}, \eqref{G3L} and \eqref{N32})} \nonu\\
& =& \frac{f_6}{\tilde a(\ve\rho)} \Lambda Q_c +  f_4 \frac{\tilde a'(\ve \rho)}{\tilde a^2(\ve\rho)} Q_c + \frac{ a''}{2a} y^2 Q_c^{m-1} B_{1,c}  -  f_3 B_{1,c}   - f_4(A_{1,c})_y  + [\frac 1\ve (A_{2,c})_t +f_2 \Lambda A_{2,c}]  \nonu\\
& & +\frac{a'}{a}Q_c^{m-1}yB_{2,c} -\frac 12 f_1 yB_{2,c} + (m-1)\tilde a (\ve\rho) Q_c^{m-2} (A_{1,c} B_{2,c} + A_{2,c}B_{1,c}) \nonu\\
& & +    (m-1) \Big[ \frac{a'(\ve \rho)}{\tilde a^{m-2} (\ve\rho)} yQ_c^{m-2}  A_{1,c}B_{1,c} +\frac 1{2}  \tilde a^2 (\ve\rho) Q_c^{m-3} ((m-2)A_{1,c}^2B_{1,c} + B_{1,c}^3) \Big].
\eea
Moreover, suppose that $(A_{k,c}, B_{k,c})$ satisfy (\ref{IP}) for $k=1, 2$ and $3$. Then 
\be\label{4p5a}
\|\ve^4 (\mathcal F_4(t, \cdot)  + \ve f(t) \mathcal F_c) e^{i\Theta}\|_{H^1(\R)}\leq K\ve^4 (e^{-\ve\mu |\rho(t)|} +  \ve),
\ee
uniformly in time. The objective now is to set $\mathcal F_k \equiv 0$ for $k=1,2$ and 3, which amounts to solve, for $t \in [-T_\ve, \tilde T_\ve]$ fixed, the linear systems in the $y$ variable
$$
(\Omega_k) \qquad   \mathcal L_+ (A_{k,c}) =F_k; \quad    \mathcal L_- (B_{k,c})=G_k.
$$
The cases $k=1,2$ were solved in \cite{Mu1}; for the sake of completeness we state these results without proofs. The case $k=3$ is one of the novelties of this paper. In the next step, the following results will be required.

\subsection*{Spectral properties of linear NLS operators} Fix $c>0$,  $m \in [2, 5)$, and let
\be\label{defLy}
 \mathcal{L}_+ w  :=  - w_{yy} +  cw - m Q_c^{m-1} w, \qquad  \mathcal{L}_- w :=  - w_{yy} +  cw -  Q_c^{m-1} w;
\ee
where $w=w(y)$.  Then one has

\begin{lem}\label{Linear} The linear operators $\mathcal{L}_{\pm}$, defined on $L^2(\R)$ by \eqref{defLy}, have domain $H^2(\R)$. In addition, they are self-adjoint and satisfy the following properties:
\ben
%\item \emph{First eigenvalue}. There exist a unique $\lambda_m>0$ such that  $\mathcal{L}_+  Q_c^{\frac {m+1}2} =-\lambda_m Q_c^{\frac {m+1}2} $. 
%
%\medskip

\item The kernel of $\mathcal{L}_+$ and $\mathcal L_-$ is spanned by $Q_c'$ and $Q_c$ respectively. Moreover,
\be\label{LaQc}
\Lambda Q_c := \partial_{c'} {Q_{c'}}\big|_{ c'=c} = \frac 1c \Big[\frac 1{m-1} Q_c + \frac 12 yQ'_c \Big],
\ee
satisfies $\mathcal{L}_+ (\Lambda Q_c)=- Q_c$. Finally, the continuous spectrum of $\mathcal L_\pm$ is given by $\sigma_{cont}(\mathcal L_\pm) =[c,+\infty)$.

\medskip

\item \emph{Inverse}. For all   $h=h(y) \in L^2(\R)$ such that $\int_\R h Q'_c=0$ $($resp. $\int_\R h Q_c=0)$, there exists a unique $ h_+ \in H^2(\R)$ $($resp. $h_-\in H^2(\R))$  such that $\int_\R h_+Q'_c=0$ $($resp. $\int_\R h_- Q_c =0)$ and $\mathcal{L}_+ h_+ = h$ $($resp $\mathcal{L}_- h_- = h)$. Moreover, if $h$ is even $($resp. odd$)$, then $h_+$ is even $($resp. $h_- $ is odd$)$.

\medskip

\item \emph{Regularity in the Schwartz space $\mathcal S(\R)$}. For $h\in H^2(\R)$,  $\mathcal{L}_\pm h \in \mathcal{S}(\R)$ implies $h\in \mathcal{S}(\R)$.

%\medskip
%
%\item\label{6a} \emph{Coercivity}.  There exists $\nu_0>0$ such that the following is satisfied.
%
%\ben
%\item For $w=w(y)\in H^1(\R)$, define
%$$
%\mathcal B[w,w] := % \frac 12 \int_\R\re w  \mathcal L_+ (\re w) + \frac 12 \int_\R\ima w  \mathcal L_-( \ima w)  
% \frac 12\int_\R (|w_y|^2+ |w|^2 - Q_c^{m-1} |w|^2 - (m-1) Q_c^{m-1} (\re w)^2) 
%$$
%Suppose that $\ima \int_\R \bar w Q_c = \re \int_\R \bar w Q_c' =0 $. Then one has 
%$$
%\mathcal B[w,w] \geq \nu_0 \int_\R |w|^2  -K\abs{\re \int_\R \bar w Q_c }^2.
%$$
%for some $ K>0$.
%
%\medskip
%
%\item Suppose now that for $v\neq 0$, and $\theta\in \R$ one has  
%$$
% \re \int_\R  \bar w Q_c' e^{iyv/2}e^{i\theta} = \ima \int_\R \bar w Q_c e^{iyv/2}e^{i\theta} =0.
% $$
%Then
%$$
%\tilde{\mathcal B}[w,w] \geq \nu_0 \int_\R |w|^2 -K\abs{\re \int_\R \bar w Q_ce^{iyv/2}e^{i\theta}}^2,
%$$
%where $\tilde{\mathcal B}[w,w] :={\mathcal B}[w e^{iyv/2}e^{i\theta} ,  w e^{iyv/2}e^{i\theta}]$.
%\een
% 
\een
\end{lem}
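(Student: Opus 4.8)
The plan is to treat $\mathcal{L}_\pm$ as compactly-perturbed copies of the free operator $-\partial_y^2 + c$ and to read off all three conclusions from standard one-dimensional Schr\"odinger theory. First I would record that, since $Q_c$ lies in the Schwartz class, the potential $mQ_c^{m-1}$ (resp. $Q_c^{m-1}$) is a bounded, exponentially decaying multiplication operator; hence $\mathcal{L}_\pm$ is a bounded self-adjoint perturbation of $-\partial_y^2 + c$, which by Kato--Rellich has domain $H^2(\R)$ and is self-adjoint. Because the potential decays to zero, Weyl's theorem on the invariance of the essential spectrum gives $\sigma_{ess}(\mathcal{L}_\pm) = \sigma_{ess}(-\partial_y^2 + c) = [c,+\infty)$, which is the asserted continuous spectrum, and since $c>0$ any spectrum in $(-\infty,c)$ consists of isolated eigenvalues of finite multiplicity.

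For part (1), the kernel elements come from the symmetries of the soliton equation. Differentiating $Q_c'' - cQ_c + Q_c^m = 0$ in $y$ gives $\mathcal{L}_+ Q_c' = 0$, while the equation itself rearranges to $\mathcal{L}_- Q_c = 0$; differentiating in $c$ yields $\mathcal{L}_+(\partial_c Q_c) = -Q_c$, and the closed form for $\Lambda Q_c = \partial_c Q_c$ follows by inserting the scaling $Q_c(y) = c^{1/(m-1)}Q(c^{1/2}y)$ and using the chain rule. To see the kernels are exactly one-dimensional I would argue at the ODE level: $\mathcal{L}_\pm w = 0$ is a second-order linear ODE whose (first-order-term-free) Wronskian of two independent solutions is a nonzero constant, so a solution independent of the explicit decaying one must grow exponentially at $+\infty$; hence only $Q_c'$ (resp. $Q_c$), up to scalars, lies in $H^2(\R)$.

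For part (2), $0$ is an isolated eigenvalue lying strictly below the essential spectrum, so the Fredholm alternative for self-adjoint operators applies: on $\{Q_c'\}^\perp$ (resp. $\{Q_c\}^\perp$) the operator has a bounded inverse, giving existence and uniqueness of $h_+$ (resp. $h_-$) orthogonal to the kernel with $\mathcal{L}_+ h_+ = h$. The parity statement follows because the even potential makes $\mathcal{L}_\pm$ commute with the reflection $y\mapsto -y$, so it preserves the even and odd subspaces; restricted to the even functions $\mathcal{L}_+$ has trivial kernel (its generator $Q_c'$ being odd), hence an even datum $h$ produces a unique even preimage, which is automatically orthogonal to $Q_c'$.

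Finally, for part (3) I would run an elliptic bootstrap followed by an asymptotic decay estimate. Writing the equation as $-h'' + ch = g + V h$ with $g := \mathcal{L}_\pm h \in \mathcal{S}(\R)$ and $V$ Schwartz, the identity $h'' = ch - Vh - g$ propagates $H^k$ regularity: if $h\in H^k$ then the right side is in $H^k$ (products of Schwartz functions with $H^k$ functions stay in $H^k$), so $h\in H^{k+2}$, and induction gives $h\in\bigcap_k H^k \subset C^\infty$. The remaining, and in my view the only genuinely delicate, point is rapid decay of $h$ and its derivatives: for large $|y|$ the equation is an exponentially small perturbation of $-h'' + ch = 0$, whose $L^2$ solution branch decays like $e^{-\sqrt{c}\,|y|}$, so by a Levinson-type argument $h$, together with all its derivatives (obtained by repeated differentiation of the equation), decays faster than any polynomial, whence $h\in\mathcal{S}(\R)$. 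The main obstacle throughout is thus not the algebraic identities but controlling this decay/growth dichotomy at infinity rigorously; everything else reduces to quoting standard self-adjoint perturbation and Fredholm theory.
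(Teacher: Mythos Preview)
Your proof is correct. The paper itself does not give a proof but simply refers to Weinstein and Martel--Merle for these classical facts, so there is no ``paper's proof'' to compare against; your sketch supplies precisely the standard arguments one finds in those references (Kato--Rellich for self-adjointness, Weyl's theorem for the essential spectrum, differentiation of the soliton equation in $y$ and $c$ for the kernel and $\Lambda Q_c$, the Wronskian growth/decay dichotomy for simplicity of the zero eigenvalue, the Fredholm alternative plus parity commutation for the inverse, and elliptic bootstrap followed by asymptotic ODE analysis for Schwartz regularity). The only minor imprecision is your identification of $\sigma_{ess}$ with the asserted $\sigma_{cont}$: these coincide here because exponentially decaying one-dimensional potentials admit no embedded eigenvalues in $[c,+\infty)$, a point worth stating explicitly.
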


For the proof of these properties see e.g. Weinstein \cite{We1}, and Martel-Merle \cite{MMcol1}.

\medskip

\noindent
{\bf Step 5. Resolution of linear systems.} The next step of the proof is to look at the linear systems appearing  in \cite[Subsection 3.4]{Mu1}. The first system, $(\Omega_1)$, does not suffer any modification, and is given by
$$
 (\Omega_1)   \qquad \mathcal L_+ A_{1,c}  = F_1,\quad \mathcal L_- B_{1,c}  = G_1,
$$   
with $F_1,G_1$ given in (\ref{G1F1}). It turns out that $\int_\R Q_c' F_1 = \int_\R Q_c G_1 =0$, therefore Lemma \ref{Linear} applies. We have existence and uniqueness of a solution in the Schwartz class for this linear system. Moreover, the solution of this system is given by (see Remark 3.3 in \cite{Mu1}).
\be\label{O1}
A_{1,c}(t, y) = \frac {a'}{\tilde a^{m}} c^{\frac 1{m-1}-\frac 12}A_{1}(\sqrt{c} y), \quad 
 B_{1,c} (t, y)  =  \frac {a' v}{\tilde a^{m}} c^{\frac 1{m-1}-1}B_{1}(\sqrt{c} y),  
\ee
with
\be\label{AB1}
A_1 (y) :=\frac 1{m+3} (y(yQ' -Q) +\xi Q') , \quad B_{1}(y) := -\frac{1}{2(5-m)}(y^2 +\chi )Q,
\ee
and $\xi, \chi$ given by
\be\label{xichi}
\xi := - \frac{ \int_\R (\frac 12Q^2  + y^2 Q'^2)}{\int_\R Q'^2} = -\frac{m+7}{2(m-1)} + \chi, \quad \chi :=  -\frac{\int_\R y^2 Q^2}{\int_\R Q^2}.
\ee
In addition, $A_{1,c}$ and $B_{1,c}$ satisfy (\ref{IP}), and the following orthogonality conditions
\be\label{OO}
\int_\R A_{1,c} Q_c = \int_\R A_{1,c} Q_c' = \int_\R B_{1,c} Q_c =\int_\R B_{1,c} Q_c'=0.
\ee
Finally, note that $A_{1,c}$ is odd and $B_{1,c}$ is even. 

\begin{rem}
Note that from $(\Omega_1)$, (\ref{O1}) and (\ref{G1F1}) we can conclude that 
\be\label{LA1B1}
\mathcal L_+ A_1 = yQ(Q^{m-1}-\frac 4{m+3}), \quad  \mathcal L_- B_1 = \frac{4}{5-m}\Lambda Q -\frac 1{m-1}Q.
\ee
This property will be useful to prove the Main Theorem.
\end{rem}

\medskip

\noindent
{\bf Step 6. Second order linear system.} Now we consider the  linear system, $(\Omega_2)$,  given by
$$
 (\Omega_2)   \qquad \mathcal L_+ A_{2,c}  =  F_2,\quad  \mathcal L_- B_{2,c}  = G_2,
$$   
with $F_2,G_2$ are given in (\ref{F2})-(\ref{G2}). It turns out that both terms contain a time derivative of the \emph{shape parameter} (cf. Claim 1 in \cite{Mu1} for more details). Indeed, $F_2$ and $G_2$ posses the terms $-\frac 1\ve (B_{1,c} )_t$ and $\frac 1\ve (A_{1,c})_t$ which, thanks to (\ref{O1}), can be written as follows (here $\rho_1'(t) := \rho'(t) -v -\ve^2f_4(t)$)
\bee
\frac 1\ve (A_{1,c})_t & = & \frac { \rho'}{\tilde a^{m} }\big[ a''  - \frac {m a'^2 }{(m-1) a }  \big] c^{\frac 1{m-1} -\frac 12} A_1(\sqrt{c} y) \\
& =& \frac {(v + \ve^2 f_4) }{\tilde a^{m} }\big[ a''  - \frac {m a'^2 }{(m-1) a }  \big] c^{\frac 1{m-1} -\frac 12} A_1(\sqrt{c} y) + O_{H^1(\R)}( |\rho_1'(t) | e^{-\ve\mu |\rho(t)|}).
\eee 
The term with the coefficient $\ve^2 f_4$ is too small to be considered (it leads to an error $O(\ve^4 e^{-\mu\ve |\rho(t)|})$), and the terms with coefficient $|\rho_1'(t)|$ can be added to the dynamical system (\ref{F0mod}). Then, we discard such terms. Similarly, using (\ref{f1f2}),
\bee
 \frac 1\ve (B_{1,c})_t &  =&   \frac{1 }{\tilde a^m} \big[  a'' v^2 + a' f_1   -\frac{m}{m-1} \frac{a'^2 v^2 }{a}  \big] c^{\frac 1{m-1} -1} B_1(\sqrt{c} y) + O((\ve^2|f_4| + \ve^3|f_5| + |\rho'_1(t)|) e^{-\ve\mu |\rho(t)|})\\
&  = &  \frac{1 }{\tilde a^m} \big[  a'' v^2 + \frac{8 a'^2 c}{(m+3)a}   -\frac{m}{m-1} \frac{a'^2 v^2 }{a}  \big] c^{\frac 1{m-1} -1} B_1(\sqrt{c} y),
\eee
where we have discarded the error terms following the same analysis as above. In addition, we replace (\ref{O1}) in $ F_2$, $ G_2$, and compute the term $\Lambda B_{1,c}= \partial_c B_{1,c}$, using (\ref{O1}). We finally obtain \cite{Mu1} simplified source terms, $\tilde F_{2}$ and $\tilde G_2$, given by
\bee
\tilde F_2(t,y) &  = & \frac{a''}{\tilde a^{m}} (\ve \rho(t)) \big[ F_{2,c}^I (y)+ \frac{v^2(t)}{c(t)} F_{2,c}^{II}(y) \big] \nonu\\
& &  + \frac{a'^2 }{\tilde a^{2m-1} } (\ve \rho(t))\big[ F_{2,c}^{III}(y) + \frac{v^2(t)}{c(t)}F_{2,c}^{IV}(y)\big]   -  \frac{f_3(t)}{\tilde a(\ve \rho(t))} Q_c(y),  
\eee
where $F_{2,c}^{(\cdot)} (y) = c^{\frac 1{m-1}} F_{2}^{(\cdot)}(\sqrt{c} y)$, and with 
\be\label{F21}
F_2^I(y) := \frac 12 y^2 Q^m(y),\qquad F_2^{II} (y):= -B_1(y),
\ee
\be\label{F23}
F_{2}^{III}(y)  : =  (mQ^{m-1}(y) -\frac 4{m+3}) yA_1(y)  +\frac m2 (m-1) Q^{m-2}(y)A_1^2(y) -\frac{8}{(m+3)}B_1(y);
\ee
\be\label{F24}
F_2^{IV}(y) := \frac 12 (m-1) Q^{m-2}(y) B_1^2(y) -\frac 2{5-m}yB_1' (y) - \frac {m-8}{5-m} B_1(y).
\ee
Note that each term above is even and thus orthogonal to $Q'$. On the other hand, 
$$
\tilde G_2 (y) :=   v(t) \big[ \frac{a'' }{\tilde a^m}(\ve \rho(t)) G_{2,c}^I(y) + \frac{a'^2 }{\tilde a^{2m-1}} (\ve \rho(t))G_{2,c}^{II}(y) \Big]   -\frac{f_4(t)}{\tilde a(\ve \rho(t))} Q_c'(y);%+ 2  a^{1/2}Q_c  A_{1,c}B_{1,c};
$$
with $G_{2,c}^{(\cdot)} (y)= c^{\frac 1{m-1} -\frac 12} G_{2}^{(\cdot)}(\sqrt{c} y)$ and $G_2^I (y):= A_1(y)$,
\be\label{G22}
G_2^{II}(y)  :=  \frac{m-6}{5-m}A_1(y) +(Q^{m-1}-\frac 4{m+3}) yB_1(y)  + \frac{2}{5-m}yA_1' (y) + (m-1)Q^{m-2} A_1 B_1(y).
\ee
Now $(\Omega_2)$ is replaced by the linear system $(\tilde \Omega_2)$:
$$
(\tilde \Omega_2) \qquad \mathcal L_+ A_{2,c} = \tilde F_2, \quad  \mathcal L_- B_{2,c} = \tilde G_2.
$$
Lemma 3.6 in \cite{Mu1} ensures that there exist unique solution to $(\tilde \Omega_2)$ satisfying $A_{2,c}$  even and $B_{2,c}$ odd, the estimates (\ref{IP}), and the following decomposition:
\bea\label{A2}
A_{2,c}(t, y) & = & \frac{a''}{\tilde a^{m}} ( A_{2,c}^I(y) + \frac {v^2}c A_{2,c}^{II}(y)  )  + \frac{a'^2}{\tilde a^{2m-1}} ( A_{2,c}^{III}(y) + \frac{v^2}c A_{2,c}^{IV}(y))  + \frac{f_3}{\tilde a} \Lambda Q_c,
\eea
with $A_{2,c}^{(\cdot)} (y) = c^{\frac 1{m-1} -1} A_{2}^{(\cdot)}(\sqrt{c} y)$, $A_{2}^{(\cdot)} $ even, and
\be\label{B2}
B_{2,c}(t, y) = \frac{a''v}{\tilde a^{m}} B_{2,c}^I(y) +\frac{a'^2v}{\tilde a^{2m-1}} B_{2,c}^{II}(y) +  \frac{f_4}{2\tilde a} y Q_c,
\ee
with $B_{2,c}^{(\cdot)} (y) = c^{\frac 1{m-1} -\frac 32} B_{2}^{(\cdot)}(\sqrt{c} y)$ and $B_{2}^{(\cdot)} $ odd. Finally, $A_{2,c}$ and $B_{2,c}$ satisfy the orthogonality conditions
\be\label{OO2}
\int_\R A_{2,c} Q_c = \int_\R A_{2,c} Q_c' = \int_\R B_{2,c} Q_c =\int_\R B_{2,c} Q_c'=0,
\ee
provided $f_3,f_4$ are of the form (\ref{f3})-(\ref{f4}), with
\be\label{albe}
\al_{(\cdot)} := \frac 1{2\theta M[Q]} \int_\R \Lambda Q F_2^{(\cdot)}, \qquad \beta_{(\cdot)} := -\frac 1{M[Q]} \int_\R yQ G_2^{(\cdot)}.
\ee
Later, in Lemma \ref{albet}, we will give explicit expressions for these parameters. 

\medskip

\noindent
{\bf Step 7. Third order linear system.}  Now we solve the last linear system. From Proposition \ref{prop:decomp}, more precisely (\ref{F12}), we seek for a solution of the following system,
\be\label{O3}
({\Omega}_3)
\qquad \mathcal{L}_+  A_{3,c} =  F_3,\quad \mathcal{L}_- B_{3,c} = G_3,
\ee
where $ F_3$ and $ G_3$ were defined in (\ref{F3})-(\ref{G3}). As in the previous linear system, $F_3$ and $G_3$ contain terms with time derivatives, that we proceed to simplify. Indeed, from the decomposition (\ref{A2}) and (\ref{f3}), one has
\bea\label{A2cmod}
A_{2,c} &  = & \frac{a''}{\tilde a^{m}} \big[ (A_{2,c}^I + \al_1 \Lambda Q_c) + \frac {v^2}c (A_{2,c}^{II} + \al_2 \Lambda Q_c) \big]  \nonu\\
& & + \frac{a'^2}{\tilde a^{2m-1}} \big[ (A_{2,c}^{III} + \al_3 \Lambda Q_c) + \frac{v^2}c (A_{2,c}^{IV} + \al_4 \Lambda Q_c) \big].
\eea
Therefore, since $\rho_1' = \rho' - v - \ve f_4  $ and $v' = \ve f_1 + \ve^3 f_5$,
\bea
\frac 1\ve (A_{2,c})_t & =& (\frac{a''}{\tilde a^{m}})' v \big[ (A_{2,c}^I + \al_1 \Lambda Q_c) + \frac {v^2}c (A_{2,c}^{II} + \al_2 \Lambda Q_c) \big]  \nonu\\
& & + (\frac{a'^2}{\tilde a^{2m-1}})' v \big[ (A_{2,c}^{III} + \al_3 \Lambda Q_c) + \frac{v^2}c (A_{2,c}^{IV} + \al_4 \Lambda Q_c) \big] \nonu\\
& & + 2 \frac{a''}{\tilde a^{m}}  \frac {v f_1}c (A_{2,c}^{II} + \al_2 \Lambda Q_c)+ 2 \frac{a'^2}{\tilde a^{2m-1}} \frac{v f_1}c (A_{2,c}^{IV} + \al_4 \Lambda Q_c) \nonu\\
& & + O( (|\rho_1'| +  \ve^{-1}|v' -\ve f_1 -\ve^3 f_5| + \ve^2 |f_3|)e^{-\mu\ve|\rho(t)|}) \nonu \\
& = :& \tilde A_{2,c}  + O( (|\rho_1'| +  \ve^{-1}|v' -\ve f_1 -\ve^3 f_5| + \ve^2 |f_3|)e^{-\mu\ve|\rho(t)|})\label{tA2c}. 
\eea
The error term in the last row above can be neglected either by putting it on the dynamical system $\mathcal F_0$, or in the error term $\tilde S[\tilde u](t)$. Similarly, from (\ref{B2}) and (\ref{f4}),
\bea\label{B2cmod}
B_{2,c} =\frac{a''}{\tilde a^{m}} v (B_{2,c}^I + \frac{\beta_1}{2c} y Q_c)   + \frac{a'^2}{\tilde a^{2m-1}} v (B_{2,c}^{II} + \frac{\beta_2}{2c} y Q_c),
\eea
and then
\bea
\frac 1\ve (B_{2,c})_t & =& (\frac{a''}{\tilde a^{m}})' v^2 (B_{2,c}^I + \frac{\beta_1}{2c} y Q_c)   + (\frac{a'^2}{\tilde a^{2m-1}})' v^2 (B_{2,c}^{II} + \frac{\beta_2}{2c} y Q_c) \nonu\\
& & +  \frac{a''}{\tilde a^{m}}  f_1 (B_{2,c}^I + \frac{\beta_1}{2c} y Q_c)   +  \frac{a'^2}{\tilde a^{2m-1}}  f_1 (B_{2,c}^{II} + \frac{\beta_2}{2c} y Q_c) \nonu\\
& & + O( (|\rho_1'| +  \ve^{-1}|v' -\ve f_1 -\ve^3 f_5| + \ve^2 |f_3|)e^{-\mu\ve|\rho(t)|})\nonu\\
& = :& \tilde B_{2,c} + O( (|\rho_1'| +  \ve^{-1}|v' -\ve f_1 -\ve^3 f_5| + \ve^2 |f_3|)e^{-\mu\ve|\rho(t)|}) \label{tB2c}.
\eea
\begin{rem}
Note that, from $(\Omega_2)$, (\ref{A2cmod}), (\ref{B2cmod}) and (\ref{F21})-(\ref{G22}), we can conclude that 
\be\label{LA2B2}
\begin{cases}
\mathcal L_+ A_2^I = F_2^{I} -\al_1 Q, \quad \mathcal L_+ A_2^{II} = F_2^{II} -\al_2 Q, \quad  \mathcal L_+ A_2^{III} = F_2^{III} -\al_3 Q ,\\
 \mathcal L_+ A_2^{IV} = F_2^{IV} -\al_4 Q, \quad  \mathcal L_- B_1^{I} = G_2^{I} -\beta_1 Q', \quad \mathcal L_- B_2^{II} = G_2^{II} -\beta_2 Q' . \end{cases}
\ee
\end{rem} 
Now we replace $\tilde A_{2,c}$ and $\tilde B_{2,c}$ in (\ref{F3}) and (\ref{G3}). A simple remark that will be useful later is the fact that (\ref{A2cmod}), (\ref{B2cmod}) and (\ref{OO2}) imply that 
\bea
& & \int_\R (A_{2,c}^I + \al_1 \Lambda Q_c) Q_c =\int_\R (A_{2,c}^{II} + \al_2 \Lambda Q_c)Q_c =0, \label{OrthA}\\
& &  \int_\R (A_{2,c}^{III} + \al_3 \Lambda Q_c) Q_c =\int_\R  (A_{2,c}^{IV} + \al_4 \Lambda Q_c)Q_c=0,\label{OrthAp} \quad \hbox{ and}\\
& & \int_\R (B_{2,c}^I + \frac{\beta_1}{2c} y Q_c)Q_c' = \int_\R (B_{2,c}^{II} + \frac{\beta_2}{2c} y Q_c)Q_c' =0.\label{OrthB}
\eea
Let us come back to our problem. We get then new, simplified terms $\tilde F_3$ and $\tilde G_3$. Note that $\tilde F_3$ and $\tilde G_3$ are odd and even functions in the $y$ variable, respectively. In what follows, we consider the modified linear system
$$
(\tilde \Omega_3) \qquad \mathcal L_+ (A_{3,c}) = \tilde F_3, \qquad \mathcal L_- (B_{3,c}) = \tilde G_3.  
$$
According to Lemma \ref{Linear}, this system has unique solutions provided the two orthogonality conditions
\be\label{OOO}
\int_\R \tilde F_3 Q_c' =\int_\R \tilde G_3 Q_c =0,
\ee
are satisfied,  for all $t\in [-T_\ve, \tilde T_\ve]$. In particular, we claim the following 
\begin{lem}\label{LOOO}
There are \emph{unique} parameters $\delta_j,\nu_j\in \R$ such that, for $f_5(t)$ and $f_6(t)$ given in (\ref{f5})-(\ref{f6}), the orthogonality conditions (\ref{OOO}) are satisfied.
\end{lem}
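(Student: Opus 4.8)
The plan is to read off the two scalar identities in \eqref{OOO} as linear equations for the still-undetermined parameters $\delta_j$ and $\nu_j$ (the $\eta_j$ of \eqref{f6}), exploiting the fact that $f_5$ and $f_6$ enter $\tilde F_3$ and $\tilde G_3$ only through the single terms $-\tfrac{f_5}{2\tilde a}yQ_c$ and $\tfrac{f_6}{\tilde a}\Lambda Q_c$ inherited from \eqref{F3R}. First I would record the parity structure: after the substitutions \eqref{tA2c}--\eqref{tB2c}, the source $\tilde F_3$ is odd and $\tilde G_3$ is even in $y$, so the pairings $\int_\R \tilde F_3 Q_c'$ and $\int_\R \tilde G_3 Q_c$ are against functions of matching parity, and $Q_c'$ (odd), $Q_c$ (even) are exactly the kernel directions for which Lemma \ref{Linear}(2) applies once \eqref{OOO} holds.

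Next I would organise $\tilde F_3$ and $\tilde G_3$ as finite sums of terms $\Pi(\ve\rho,v,c)\,g(\sqrt c\,y)$, where $\Pi$ ranges over the independent \emph{profiles} $\tfrac{a^{(3)}}{a}$, $\tfrac{a'a''}{a^2}$, $\tfrac{a'^3}{a^3}$ (and their $\tfrac{v^2}{c}$-weighted versions in $\tilde F_3$, respectively the $v$- and $\tfrac{v^3}{c}$-weighted versions in $\tilde G_3$) and $g$ is a fixed Schwartz function built from $Q,A_1,B_1$ and the $A_2^{(\cdot)},B_2^{(\cdot)}$. This profile list closes because of the explicit form of the lower-order correctors: $A_{1,c},B_{1,c}\propto a'/\tilde a^m$ by \eqref{O1}, and $A_{2,c},B_{2,c}$ carry only $a''/\tilde a^m$ and $a'^2/\tilde a^{2m-1}$ by \eqref{A2}--\eqref{B2}, so differentiating in $t$ (which turns one factor $a^{(k)}$ into $a^{(k+1)}$ up to a velocity factor from $\rho'\approx v$) and forming the quadratic/cubic combinations of $N^{3,1},N^{3,2}$ produce no new profile. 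The crucial simplification is that the second-order orthogonality relations \eqref{OrthA}--\eqref{OrthB} kill several profiles automatically: the $a^{(3)}$- and $\tfrac{v^2}{c}$-weighted pieces coming from $(\cdot)'$ in \eqref{tA2c}--\eqref{tB2c} are attached exactly to $A_{2,c}^{(\cdot)}+\al_{(\cdot)}\Lambda Q_c$ or $B_{2,c}^{(\cdot)}+\tfrac{\beta_{(\cdot)}}{2c}yQ_c$, which are orthogonal to $Q_c$ and $Q_c'$ respectively. This is precisely what leaves $f_5$ with a bare $\tfrac{a^{(3)}}{a}$-profile in \eqref{f5}, and what collapses the $f_4$-dependence of \eqref{G3} into the single explicit term of \eqref{f6}.

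With the automatically-orthogonal contributions removed, \eqref{OOO} splits into one scalar equation per surviving profile. In each such equation the coefficient of $f_5$ is $-\tfrac{1}{2\tilde a}\int_\R yQ_cQ_c' = \tfrac{1}{4\tilde a}\int_\R Q_c^2\neq 0$, so matching the five profiles of $\tilde F_3$ determines $\delta_1,\dots,\delta_5$ uniquely; likewise the coefficient of $f_6$ is $\tfrac{1}{\tilde a}\int_\R \Lambda Q_c\,Q_c=\tfrac{5-m}{4(m-1)\tilde a}\,c^{-1}\!\int_\R Q_c^2$, nonzero precisely because $m<5$, so the remaining profiles of $\tilde G_3$ determine the $f_4$-coefficient and $\nu_1,\dots,\nu_4$ uniquely. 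A short computation using $\tfrac{\tilde a'}{\tilde a}=\tfrac{1}{m-1}\tfrac{a'}{a}$ then identifies that $f_4$-coefficient with $-\tfrac{4cf_4a'}{(5-m)a}$, recovering the first term of \eqref{f6}.

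The hard part is the bookkeeping of the second step: checking that, after \eqref{tA2c}--\eqref{tB2c} and the expansion of $N^{3,1},N^{3,2}$, the profile list genuinely closes and that every $\tfrac{v^2}{c}$-weighted $a^{(3)}$ term (and every spurious $a'a''$, $a'^3$ term) is indeed multiplied by one of the orthogonal combinations of \eqref{OrthA}--\eqref{OrthB}; this is a lengthy but mechanical parity-and-degree count. The only structural input beyond this bookkeeping is the non-degeneracy $\int_\R \Lambda Q_c\,Q_c\neq 0$, i.e. the subcriticality $m<5$, which is exactly what guarantees that $f_6$ can be solved for at all.
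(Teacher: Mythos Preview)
Your proposal is correct and follows essentially the same approach as the paper: organise $\tilde F_3,\tilde G_3$ by the finite list of potential profiles $a^{(3)}/a$, $a'a''/a^2$, $a'^3/a^3$ (with the appropriate $v^2/c$ or $v$ weights), observe that the time-derivative contributions $\tilde A_{2,c},\tilde B_{2,c}$ pair against $Q_c,Q_c'$ to zero by the second-order orthogonalities \eqref{OrthA}--\eqref{OrthB}, and then read off $\delta_j,\eta_j$ profile-by-profile using the nondegeneracy of $\int yQ_cQ_c'$ and $\int \Lambda Q_cQ_c$ (the latter requiring $m<5$). The paper simply carries out in full the ``lengthy but mechanical'' bookkeeping you defer, splitting $\int\tilde F_3Q_c'$ and $\int\tilde G_3Q_c$ into twelve labelled terms each and evaluating them one by one to obtain explicit formulas \eqref{de1}--\eqref{de5} and \eqref{et1}--\eqref{et4}; these explicit values are then needed downstream in Section~\ref{B}, so the paper cannot stop at the existence/uniqueness level as you do.
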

The proof of this result is a long, tedious but straightforward computation, that we carry out in Section \ref{AAA}. Note finally that we can choose $A_{3,c}$ odd and $B_{3,c}$ even, and moreover, they satisfy (\ref{IP}).

\medskip

\noindent
{\bf Step 8. Final conclusion.} Having solved three linear systems in the decomposition (\ref{S2add}), the error term is given now by the quantity
$$
 S[\tilde u](t,x)  =   \mathcal F_0(t, y)  e^{i\Theta} + \tilde S[\tilde u](t,x),
$$
with $\mathcal F_0$ in (\ref{F0mod}), and $\tilde S[\tilde u](t) = \ve^4 [\mathcal F_4(t, y) + \ve f(t) \mathcal F_c(y)] e^{i\Theta}$. Moreover, from (\ref{4p5a}), we have, for some constants $K,\mu>0$, 
\be\label{eroor}
\| \tilde S[\tilde u](t) \|_{H^1(\R)} \leq K \ve^4(\ve + e^{-\mu\ve |\rho(t)|}), \quad t\in [-T_\ve, \tilde T_\ve],
\ee
as required in (\ref{4p5}). The proof is complete, provided Lemma \ref{LOOO} is satisfied. The next section is devoted to the proof of this result.
\end{proof}

\bigskip

\section{Proof of  Lemma \ref{LOOO}}\label{AAA}

\medskip

\noindent
{\bf Step 1.} Recall that the soliton is given by $Q_c (y) = c^{1/(m-1)} Q(\sqrt{c} y)$.  We prove the left hand identity in (\ref{OOO}). From (\ref{F3}),  and  (\ref{tB2c}), we get 
\bee
\int_\R \tilde F_3 Q_c' &=&  \frac{a^{(3)}}{6 \tilde a^{m} } \int_\R y^3 Q_c^mQ_c'  -\frac{f_5}{2\tilde a } \int_\R yQ_cQ_c'  + \frac{m a''}{2a} \int_\R y^2 Q_c^{m-1} Q_c' A_{1,c}    -  f_3  \int_\R A_{1,c}Q_c'   \nonu\\
 & &  + f_4 \int_\R (B_{1,c})_y Q_c'   +\frac {ma'}{a}\int_\R Q_c^{m-1} Q_c' yA_{2,c}  -\frac 12 f_1 \int_\R y Q_c' A_{2,c} - \int_\R Q_c' \tilde B_{2,c}   \nonu\\
 & & - f_2\int_\R Q_c'  \Lambda B_{2,c} +(m-1)\tilde a  \int_\R Q_c'Q_c^{m-2}(m A_{1,c} A_{2,c} + B_{1,c} B_{2,c})    \nonu\\
& & + \frac 12  (m-1) \frac{a'}{\tilde a^{m-2} } \int_\R yQ_c^{m-2}Q_c' ( mA_{1,c}^2 + B_{1,c}^2 )  \nonu\\
&& +\frac 1{6} (m-1)  \tilde a^2 \int_\R Q_c' Q_c^{m-3} \big[  m(m-2)A_{1,c}^3 + 3(m-2) A_{1,c}B_{1,c}^2\big]  =: \sum_{j=1}^{12} I_j.
\eee
First of all, note that from (\ref{OO}) one has $I_4 \equiv 0$. Let us note that $\theta = \frac 1{m-1} -\frac 14$. We have
$$
I_1 =  -\frac{a^{(3)}}{2(m+1) \tilde a^{m} } \int_\R y^2 Q_c^{m+1} =-\frac{a^{(3)}c^{2\theta}}{2(m+1) \tilde a^{m} }  \int_\R y^2 Q^{m+1}.
$$
On the other hand, from (\ref{O1}) and (\ref{AB1}),
$$
I_2 =  \frac{f_5}{4\tilde a} \int_\R Q_c^2 =   \frac{f_5c^{2\theta}}{4\tilde a} \int_\R Q^2,
%$$
%A_1 (y) :=\frac 1{m+3} (y(yQ' -Q) +\xi Q') , \quad B_{1}(y) := -\frac{1}{2(5-m)}(y^2 +\chi )Q,
%$$
\qquad I_3 = \frac{m a'' a' c^{2\theta}}{2 \tilde a^{2m-1}} \int_\R y^2  Q^{m-1}Q'  A_1 .% = \frac{m a'' a' c^{2\theta}}{2(m+3) \tilde a^{2m-1}} \int_\R y^2  Q^{m-1}Q'  (y^2Q' - yQ +\xi Q').
$$
From  the identity $Q_c'' = cQ_c -Q_c^m$, (\ref{OO}), (\ref{O1}), (\ref{AB1}) and (\ref{f4}), 
$$
I_5 = f_4 \int_\R B_{1,c} Q_c^m  = c^{2\theta} \frac{v^2}c \Big[ \beta_1 \frac{a'a''}{\tilde a^{2m-1}} +\beta_2 \frac{a'^3}{\tilde a^{3m-2}}\Big] \int_\R B_1 Q^{m}.  %= -\frac{c^{2\theta}}{2(5-m)} \frac{v^2}c \Big[ \beta_1 \frac{a'a''}{\tilde a^{2m-1}} +\beta_2 \frac{a'^3}{\tilde a^{3m-2}}\Big] \int_\R  (y^2 +\chi ) Q^{m+1}.
$$
From (\ref{A2cmod})
\bee
I_6 & = &  
% \frac{ma'a''}{\tilde a^{2m-1}} \int_\R y Q_c^{m-1} Q_c'  \big[ (A_{2,c}^I + \al_1 \Lambda Q_c) + \frac {v^2}c (A_{2,c}^{II} + \al_2 \Lambda Q_c) \big]  \\
%& & +\frac{ma'^3}{\tilde a^{3m-2}} \int_\R y Q_c^{m-1} Q_c'   \big[ (A_{2,c}^{III} + \al_3 \Lambda Q_c) + \frac{v^2}c (A_{2,c}^{IV} + \al_4 \Lambda Q_c) \big]
 \frac{ma'a'' c^{2\theta}}{\tilde a^{2m-1}} \int_\R y Q^{m-1} Q'  \big[ (A_{2}^I + \al_1 \Lambda Q) + \frac {v^2}c (A_{2}^{II} + \al_2 \Lambda Q) \big]  \\
& & +\frac{ma'^3c^{2\theta}}{\tilde a^{3m-2}} \int_\R y Q^{m-1} Q'   \big[ (A_{2}^{III} + \al_3 \Lambda Q) + \frac{v^2}c (A_{2}^{IV} + \al_4 \Lambda Q) \big].
%& =& \frac{ma'a'' c^{2\theta}}{\tilde a^{2m-1}} \int_\R Y_2 \big[ (F_{2}^I - \al_1 Q) + \frac {v^2}c (F_{2}^{II} - \al_2  Q) \big]  \\
%& & +\frac{ma'^3c^{2\theta}}{\tilde a^{3m-2}} \int_\R Y_2   \big[ (F_{2}^{III} - \al_3 Q) + \frac{v^2}c (F_{2}^{IV} - \al_4  Q) \big].
\eee
%with $A_{2,c}^{(\cdot)} (y) = c^{\frac 1{m-1} -1} A_{2}^{(\cdot)}(\sqrt{c} y)$.
Using (\ref{f1}) and (\ref{A2cmod}),
\bee
I_7 & =& -\frac{4a' a''c^{2\theta}}{(m+3)\tilde a^{2m-1}}   \int_\R yQ'  \big[ (A_{2}^I + \al_1 \Lambda Q) + \frac {v^2}c (A_{2}^{II} + \al_2 \Lambda Q) \big] \\
& &   -\frac{4a'^3  c^{2\theta}}{(m+3)\tilde a^{3m-2}}  \int_\R yQ' \big[ (A_{2}^{III} + \al_3 \Lambda Q) + \frac{v^2}c (A_{2}^{IV} + \al_4 \Lambda Q) \big].
%& =& -\frac{4a' a''c^{2\theta}}{(m+3)\tilde a^{2m-1}}   \int_\R Y_1  \big[ (F_{2}^I - \al_1 Q) + \frac {v^2}c (F_{2}^{II} - \al_2  Q) \big] \\
%&&  -\frac{4a'^3  c^{2\theta}}{(m+3)\tilde a^{3m-2}}  \int_\R Y_1 \big[ (F_{2}^{III} - \al_3 Q) + \frac{v^2}c (F_{2}^{IV} - \al_4  Q) \big].
\eee
%A_{2,c} &  = & \frac{a''}{\tilde a^{m}} \big[ (A_{2,c}^I + \al_1 \Lambda Q_c) + \frac {v^2}c (A_{2,c}^{II} + \al_2 \Lambda Q_c) \big]  + \frac{a'^2}{\tilde a^{2m-1}} \big[ (A_{2,c}^{III} + \al_3 \Lambda Q_c) + \frac{v^2}c (A_{2,c}^{IV} + \al_4 \Lambda Q_c) \big] ,\nonu\\
Now, from (\ref{tB2c}) and (\ref{OrthB}),
\bee
I_8 & = & -\int_\R \tilde B_{2,c} Q_c'   = - c^{2\theta}(\frac{a''}{\tilde a^{m}})' \frac{v^2}c \int_\R Q'   (B_{2}^I + \frac{\beta_1}{2} y Q)   +  c^{2\theta}(\frac{a'^2}{\tilde a^{2m-1}})' \frac{v^2}{c}   \int_\R Q'  (B_{2}^{II} + \frac{\beta_2}{2} y Q) \nonu\\
& & +  \frac{8a'a'' c^{2\theta}}{(m+3)\tilde a^{2m-1}}   \int_\R  Q'(B_{2}^I + \frac{\beta_1}{2} y Q)   +  \frac{8 a'^3 c^{2\theta}}{(m+3)\tilde a^{3m-2}}  \int_\R Q' (B_{2}^{II} + \frac{\beta_2}{2} y Q ) \\
& =&  0.
%& =& - c^{2\theta}\frac{a^{(3)}}{\tilde a^{m}} \frac{v^2}c \int_\R Q'   (B_{2}^I + \frac{\beta_1}{2} y Q)   +  c^{2\theta}(\frac{a'^2}{\tilde a^{2m-1}})' \frac{v^2}{c}   \int_\R Q'  (B_{2}^{II} + \frac{\beta_2}{2} y Q) \nonu\\
%& & +  \frac{8a'a'' c^{2\theta}}{(m+3)\tilde a^{2m-1}}   \int_\R  Q'(B_{2}^I + \frac{\beta_1}{2} y Q)   +  \frac{8 a'^3 c^{2\theta}}{(m+3)\tilde a^{3m-2}}  \int_\R Q' (B_{2}^{II} + \frac{\beta_2}{2} y Q )
%-\int_\R Q_c'  \Big[Ê(\frac{a''}{\tilde a^{m}})' v^2 (B_{2,c}^I + \frac{\beta_1}{2c} y Q_c)   + (\frac{a'^2}{\tilde a^{2m-1}})' v^2 (B_{2,c}^{II} + \frac{\beta_2}{2c} y Q_c) \nonu\\
%& & +  \frac{a''}{\tilde a^{m}}  f_1 (B_{2,c}^I + \frac{\beta_1}{2c} y Q_c)   +  \frac{a'^2}{\tilde a^{2m-1}}  f_1 (B_{2,c}^{II} + \frac{\beta_2}{2c} y Q_c) \Big]\\
\eee
Using (\ref{OO2}), (\ref{f1f2}) and (\ref{B2cmod}),
\bee
I_9 & =& -f_2 \partial_c \int_\R Q_c'  B_{2,c}  + f_2 \int_\R B_{2,c} \Lambda Q_c' \\
& =& c^{2\theta}\frac{v^2}{c} \Big[ \frac{4a' a''}{(5-m) \tilde a^{2m-1}}  \int_\R \Lambda Q'  (B_{2}^I + \frac{\beta_1}{2} y Q)   + \frac{4a'^3 }{(5-m)\tilde a^{3m-2}}  \int_\R \Lambda Q'  (B_{2}^{II} + \frac{\beta_2}{2} y Q) \Big]\\
& =& c^{2\theta}\frac{v^2}{c} \Big[ \frac{2 a' a''}{(5-m) \tilde a^{2m-1}}  \int_\R yQ''  (B_{2}^I + \frac{\beta_1}{2} y Q)   + \frac{2 a'^3 }{(5-m)\tilde a^{3m-2}}  \int_\R yQ''  (B_{2}^{II} + \frac{\beta_2}{2} y Q) \Big].
%& =& c^{2\theta}\frac{v^2}{c} \Big[ \frac{2 a' a''}{(5-m) \tilde a^{2m-1}}  \int_\R (Z_1-Z_2) (G_{2}^I  -\beta_1  Q')   + \frac{2 a'^3 }{(5-m)\tilde a^{3m-2}}  \int_\R (Z_1-Z_2) (G_{2}^{II} -\beta_2  Q') \Big].
\eee
From (\ref{O1}), (\ref{A2cmod}) and (\ref{B2cmod}),
\bee
I_{10} & = & m(m-1) \tilde a \int_\R Q_c^{m-2} Q_c' A_{1,c}A_{2,c}  + (m-1) \tilde a \int_\R Q_{c}^{m-2} Q_c' B_{1,c}B_{2,c}\\
& =& m(m-1) \frac{a' a'' c^{2\theta} }{\tilde a^{2m-1}} \int_\R Q^{m-2} Q'A_1 \big[ (A_2^{I} + \al_1 \Lambda Q) + \frac{v^2}{c}(A_2^{II} + \al_2 \Lambda Q) \big]  \\
& & +  m(m-1) \frac{a'^3 c^{2\theta} }{\tilde a^{3m-2}} \int_\R Q^{m-2} Q'A_1 \big[ (A_2^{III} + \al_3 \Lambda Q) + \frac{v^2}{c}(A_2^{IV} + \al_4 \Lambda Q) \big]\\
& & + (m-1) \frac{a' a'' c^{2\theta}}{\tilde a^{2m-1}} \frac{v^2}c  \int_\R Q^{m-2} Q' B_{1} (B_{2}^I +\frac 12 \beta_1 yQ) \\
& & + (m-1) \frac{a'^3c^{2\theta}}{\tilde a^{3m-2}} \frac{v^2}c  \int_\R Q^{m-2} Q' B_{1} (B_{2}^{II} +\frac 12 \beta_2 yQ).
%& =& m(m-1) \frac{a' a'' c^{2\theta} }{\tilde a^{2m-1}} \int_\R Y_3 \big[ (F_2^{I} - \al_1  Q) + \frac{v^2}{c}(F_2^{II} - \al_2  Q) \big]  \\
%& & +  m(m-1) \frac{a'^3 c^{2\theta} }{\tilde a^{3m-2}} \int_\R Y_3 \big[ (F_2^{III} - \al_3 Q) + \frac{v^2}{c}(F_2^{IV} - \al_4  Q) \big]\\
%& & + (m-1) \frac{a' a'' c^{2\theta}}{\tilde a^{2m-1}} \frac{v^2}c  \int_\R Z_4(G_{2}^I - \beta_1 Q')  + (m-1) \frac{a'^3c^{2\theta}}{\tilde a^{3m-2}} \frac{v^2}c  \int_\R Z_4 (G_{2}^{II} - \beta_2 Q')
\eee
From (\ref{O1}),
\bee
I_{11} & =&  \frac{(m-1) a'^3c^{2\theta}}{\tilde a^{3m-2}} \int_\R yQ^{m-2} Q'( mA_1^2 + \frac {v^2}c B_1^2)
\eee
\bee
I_{12} & =&  \frac{(m-1) a'^3c^{2\theta}}{6\tilde a^{3m-2}} \int_\R Q^{m-3} Q' \big[m(m-2)A_1^3 + 3(m-2)\frac {v^2}c A_1B_1^2\big]
\eee
Collecting all the previous computations, we get
$$
\int_\R \tilde F_3 Q_c'  = \frac{c^{2\theta}}{2\tilde a} M[Q] \Big[ f_5  - \delta_1\frac{a^{(3)}}{a} - (\delta_2 + \delta_3 \frac{v^2}{c})\frac{a'a''}{a^2}  - (\delta_4 + \delta_5\frac{v^2}c) \frac{a'^3}{a^3} \Big] =0,
$$
provided the parameters $\delta_j$ are defined as follows:
\be\label{de1}
\delta_1  :=  \frac 1{(m+1)M[Q]} \int_\R y^2 Q^{m+1} >0;
\ee
\bea\label{de2}
\delta_2 & := & -\frac{2}{M[Q]}\Big[ \frac{m}{2} \int_\R y^2  Q^{m-1}Q'  A_1 +  m \int_\R y Q^{m-1} Q' (A_{2}^I + \al_1 \Lambda Q)   -\frac{4}{(m+3)}   \int_\R yQ'  (A_{2}^I + \al_1 \Lambda Q)\nonu \\
& &  \qquad \qquad + m(m-1)  \int_\R Q^{m-2} Q'A_1(A_2^{I} + \al_1 \Lambda Q)  \Big];
\eea
\bea\label{de3}
\delta_3 & := & -\frac{2}{M[Q]}\Big[ \beta_1 \int_\R B_1 Q^{m}  +  m \int_\R y Q^{m-1} Q' (A_{2}^{II} + \al_2 \Lambda Q)  -\frac{4}{(m+3)}   \int_\R yQ'  (A_{2}^{II} + \al_2 \Lambda Q)  \nonu\\
& & \qquad \qquad+  \frac{2}{(5-m)}  \int_\R yQ''  (B_{2}^I + \frac{\beta_1}{2} y Q) + m(m-1) \int_\R Q^{m-2} Q'A_1 (A_2^{II} + \al_2 \Lambda Q)  \nonu\\
& &  \qquad \qquad+ (m-1)  \int_\R Q^{m-2} Q' B_{1} (B_{2}^I +\frac 12 \beta_1 yQ) \Big];
\eea
\bea\label{de4}
\delta_4 & := & -\frac{2}{M[Q]}\Big[ m\int_\R y Q^{m-1} Q'  (A_{2}^{III} + \al_3 \Lambda Q) -\frac{4}{(m+3)}  \int_\R yQ'  (A_{2}^{III} + \al_3 \Lambda Q) \nonu\\
& & \qquad \qquad+  m(m-1)  \int_\R Q^{m-2} Q'A_1 (A_2^{III} + \al_3 \Lambda Q)+ m (m-1) \int_\R yQ^{m-2} Q' A_1^2\nonu \\
& &  \qquad \qquad+  \frac16 m(m-1)(m-2) \int_\R Q^{m-3} Q'A_1^3 \Big];
\eea
and
\bea\label{de5}
\delta_5 & := & -\frac{2}{M[Q]}\Big[ \beta_2 \int_\R B_1 Q^{m}  + m \int_\R y Q^{m-1} Q' (A_{2}^{IV} + \al_4 \Lambda Q)   -\frac{4}{(m+3)}  \int_\R yQ'  (A_{2}^{IV} + \al_4 \Lambda Q) \nonu  \\
& & \qquad \qquad  + \frac{2 }{(5-m)}  \int_\R yQ''  (B_{2}^{II} + \frac{\beta_2}{2} y Q)  +  m(m-1)  \int_\R Q^{m-2} Q'A_1 (A_2^{IV} + \al_4 \Lambda Q) \nonu\\
& & \qquad \qquad+ (m-1)  \int_\R Q^{m-2} Q' B_{1} (B_{2}^{II} +\frac 12 \beta_2 yQ) + (m-1) \int_\R yQ^{m-2} Q'  B_1^2 \nonu\\
& & \qquad \qquad + \frac 12  (m-1)(m-2)  \int_\R Q^{m-3} Q' A_1B_1^2 \Big].
\eea

\medskip

\noindent
{\bf Step 2.} Now we prove the second identity in (\ref{OOO}). From (\ref{G3})  and  (\ref{tA2c}), we get 
\bee
\int_\R \tilde G_3 Q_c &=&  \frac{f_6}{\tilde a} \int_\R Q_c \Lambda Q_c +  f_4 \frac{\tilde a'}{\tilde a^2} \int_\R Q_c^2 + \frac{ a''}{2a} \int_\R y^2 Q_c^{m} B_{1,c}  -  f_3 \int_\R B_{1,c}Q_c   - f_4 \int_\R Q_c (A_{1,c})_y\\
& &   +  \int_\R \tilde A_{2,c} Q_c  +f_2 \int_\R  Q_c \Lambda A_{2,c} +\frac{a'}{a} \int_\R yQ_c^{m}B_{2,c} -\frac 12 f_1 \int_\R yQ_c B_{2,c} \nonu\\
& &  + (m-1)\tilde a  \int_\R Q_c^{m-1} (A_{1,c} B_{2,c} + A_{2,c}B_{1,c}) +  (m-1)  \frac{a'}{\tilde a^{m-2} } \int_\R yQ_c^{m-1}  A_{1,c}B_{1,c} \nonu\\
& & +   \frac 1{2}  (m-1)  \tilde a^2 \int_\R  Q_c^{m-2} ((m-2)A_{1,c}^2B_{1,c} + B_{1,c}^3) =: \sum_{i=1}^{12} J_i.
\eee
It is easy to see that from (\ref{OO}), one has $J_4 \equiv J_5 \equiv 0$. On the other hand,
$$
J_1 +J_2   =  \frac{f_6}{2\tilde a} \partial_c \int_\R Q_c^2+\frac{f_4a^{\frac 1{m-1}-1}a' c^{2\theta} }{(m-1)a^{\frac 2{m-1}}}\int_\R Q^2 =   \frac{\theta c^{2\theta}}{\tilde a}\int_\R Q^2 \Big[ \frac{f_6}{c} + \frac{4 f_4a' }{(5-m)a}\Big].
$$
From (\ref{O1}),
$$
J_3=\frac{a'a''v c^{2\theta-1}}{2\tilde a^{2m-1}} \int_\R y^2 Q^m B_1 .%=-\frac{(m+1)}{4(5-m)(m+3)} \frac{a'a''v c^{2\theta-1}}{\tilde a^{2m-1}} \int_\R  (2y^4   + (2\chi -5) y^2   )Q^2. %    (y^4 +\chi y^2) Q^{m+1}.
$$
Similarly to the proof that $I_8 \equiv 0$, one has from (\ref{OrthA})-(\ref{OrthAp}),
\bee
J_6 & =& c^{2\theta-1}v \int_\R Q\Big[ (\frac{a''}{\tilde a^{m}})'  \big[ (A_{2}^I + \al_1 \Lambda Q) + \frac {v^2}c (A_{2}^{II} + \al_2 \Lambda Q) \big]  \nonu\\
& & + (\frac{a'^2}{\tilde a^{2m-1}})'  \big[ (A_{2}^{III} + \al_3 \Lambda Q) + \frac{v^2}c (A_{2}^{IV} + \al_4 \Lambda Q) \big] \nonu\\
& & +  \frac{16a'a''}{(m+3)\tilde a^{2m-1}} (A_{2}^{II} + \al_2 \Lambda Q)+  \frac{16a'^3}{(m+3)\tilde a^{3m-2}} (A_{2}^{IV} + \al_4 \Lambda Q) \Big] \ =0.
\eee
Following the same argument as in the computation of $I_9$,
\bee
J_7 & = &  -f_2 \int_\R \Lambda Q_c A_{2,c} = -\frac{4c^{2\theta-1}v}{(5-m)} \Big[ \frac{a'a''}{\tilde a^{2m-1}}  \int_\R \Lambda Q[ (A_2^I +\al_1\Lambda Q ) +\frac{v^2}c(A_2^{II} +\al_2\Lambda Q)] \\
& & \qquad  + \frac{a'^3}{\tilde a^{3m-2}}  \int_\R  \Lambda Q[ (A_2^{III} +\al_3\Lambda Q ) +\frac{v^2}c(A_2^{IV} +\al_4\Lambda Q)]   \Big]\\
& =&  -\frac{2c^{2\theta-1}v}{(5-m)} \Big[ \frac{a'a''}{\tilde a^{2m-1}}  \int_\R yQ' [ (A_2^I +\al_1\Lambda Q ) +\frac{v^2}c(A_2^{II} +\al_2\Lambda Q)] \\
& & \qquad  + \frac{a'^3}{\tilde a^{3m-2}}  \int_\R  yQ' [ (A_2^{III} +\al_3\Lambda Q ) +\frac{v^2}c(A_2^{IV} +\al_4\Lambda Q)]   \Big].
%& = &  -\frac{2c^{2\theta-1}v}{(5-m)} \Big[ \frac{a'a''}{\tilde a^{2m-1}}  \int_\R Y_1 [ (F_2^I -\al_1 Q ) +\frac{v^2}c(F_2^{II} -\al_2 Q)] \\
%& & \qquad  + \frac{a'^3}{\tilde a^{3m-2}}  \int_\R  Y_1 [ (F_2^{III} -\al_3 Q ) +\frac{v^2}c(F_2^{IV} -\al_4 Q)]   \Big].
\eee
Replacing (\ref{B2cmod}), we get
$$
J_8= \frac{a'a''c^{2\theta-1} v}{\tilde a^{2m-1} } \int_\R yQ^m  ( B_2^I +\frac 12 \beta_1 yQ)+ \frac{a'^3 c^{2\theta} v}{\tilde a^{3m-2} c} \int_\R yQ^m  ( B_2^{II} +\frac 12 \beta_2 yQ),
$$
and using (\ref{f1f2}),
$$
J_9 = -\frac{4c^{2\theta-1} v}{(m+3)}\Big[ \frac{a'a''}{\tilde a^{2m-1} } \int_\R yQ  ( B_2^I +\frac 12 \beta_1 yQ)+ \frac{a'^3}{\tilde a^{3m-2} } \int_\R yQ  ( B_2^{II} +\frac 12 \beta_2 yQ) \Big].
$$
From (\ref{O1}), (\ref{A2cmod}) and (\ref{B2cmod}),
\bee
J_{10} & = &  (m-1) \tilde a \int_\R Q_{c}^{m-1} B_{1,c}A_{2,c} +(m-1) \tilde a \int_\R Q_c^{m-1}  A_{1,c}B_{2,c}  \\
& =& (m-1) \frac{a' a'' c^{2\theta-1}v }{\tilde a^{2m-1}} \int_\R Q^{m-1} B_1 \big[ (A_2^{I} + \al_1 \Lambda Q) + \frac{v^2}{c}(A_2^{II} + \al_2 \Lambda Q) \big]  \\
& & +  (m-1) \frac{a'^3 c^{2\theta-1}v }{\tilde a^{3m-2}} \int_\R Q^{m-1} B_1 \big[ (A_2^{III} + \al_3 \Lambda Q) + \frac{v^2}{c}(A_2^{IV} + \al_4 \Lambda Q) \big]\\
& & + (m-1) \Big[ \frac{a' a'' c^{2\theta-1}v}{\tilde a^{2m-1}}   \int_\R Q^{m-1} A_{1} (B_{2}^I +\frac 12 \beta_1 yQ)  + \frac{a'^3c^{2\theta-1}v}{\tilde a^{3m-2}}  \int_\R Q^{m-1} A_{1} (B_{2}^{II} +\frac 12 \beta_2 yQ) \Big].
%& = &  (m-1) \frac{a' a'' c^{2\theta-1}v }{\tilde a^{2m-1}} \int_\R Y_4 \big[ (F_2^{I} - \al_1 Q) + \frac{v^2}{c}(F_2^{II} - \al_2 Q) \big]  \\
%& & +  (m-1) \frac{a'^3 c^{2\theta-1}v }{\tilde a^{3m-2}} \int_\R Y_4 \big[ (F_2^{III} - \al_3  Q) + \frac{v^2}{c}(F_2^{IV} - \al_4  Q) \big]\\
%& & + (m-1) \frac{a' a'' c^{2\theta-1}v}{\tilde a^{2m-1}}   \int_\R Z_3(G_{2}^I - \beta_1 Q')  + (m-1) \frac{a'^3c^{2\theta-1}v}{\tilde a^{3m-2}}  \int_\R Z_3 (G_{2}^{II} - \beta_2 Q').
\eee
Finally, from (\ref{O1}) and scaling properties,
\bee
J_{11} & = & (m-1)\frac{a'^3c^{2\theta-1}v}{\tilde a^{3m-2}}  \int_\R yQ^{m-1}A_1B_1.
%& =&  -\frac{(m-1)}{2(m+3)(5-m)}  \frac{a'^3c^{2\theta-1}v}{\tilde a^{3m-2}}  \int_\R yQ^{m}  (y^4 Q'    +\chi y^2 Q' -y^3 Q - \chi y Q +\xi  y^2 Q' + \xi\chi Q' ) \\%(y^2 Q' - yQ +\xi Q') (y^2 +\chi ) ,
%& =&  \frac{(m-1)}{2(m+3)(5-m)(m+1)}  \frac{a'^3c^{2\theta-1}v}{\tilde a^{3m-2}}  \int_\R   (  (m+6 )y^4 Q^{m+1}  + \chi (m+7) y^2Q^{m+1}  + \xi\chi Q^{m+1} )\\
%& =& \frac{(m-1)}{2(m+3)^2(5-m)}  \frac{a'^3c^{2\theta-1}v}{\tilde a^{3m-2}} \Big[  2(m+6 ) \int_\R y^4 Q^2    + (2\chi (m+6) -6(m+6)   +\frac{m(m+7)}{m-1} )\int_\R y^2 Q^2   \Big]
\eee
Similarly, 
\bee
J_{12} & = & \frac 12 (m-1) \frac{a'^3 c^{2\theta-1}v}{\tilde a^{3m-2}} \int_\R Q^{m-2} [(m-2)A_1^2 + \frac{v^2}c B_1^2]B_1.
%& =& -\frac{(m-1)}{4(5-m)} \frac{a'^3 c^{2\theta-1}v}{\tilde a^{3m-2}} \int_\R (y^2 +\chi ) Q^{m-1} \Big[ \frac{(m-2)}{(m+3)^2}(y^2 Q' - yQ  +\xi Q')^2 + \frac{v^2}c  \frac{1}{4(5-m)^2} (y^2 +\chi)^2 Q^2 \Big] 
\eee
%$$
%2\xi -(m+7) := -(m+7) \frac{m}{(m-1)} + 2\chi
%$$
%$$
%A_1 (y) :=\frac 1{m+3} (y^2 Q' - yQ  +\xi Q') , \quad B_{1}(y) := -\frac{1}{2(5-m)}(y^2 +\chi )Q,
%$$
Collecting the above estimates, we get
$$
\int_\R \tilde G_3 Q_c  = \frac{2\theta c^{2\theta}}{\tilde a} M[Q] \Big[ \frac{f_6}c +\frac {4f_4 a'}{(5-m)a}   - \frac vc(\eta_1 + \eta_2 \frac{v^2}{c})\frac{a'a''}{a^2}  - \frac vc(\eta_3 + \eta_4\frac{v^2}c) \frac{a'^3}{a^3} \Big] =0,
$$
provided the parameters $\eta_j$ are defined as follows:
\bea\label{et1}
\eta_1 & := & -\frac{1}{2\theta M[Q]}\Big[ \frac{1}{2} \int_\R y^2  Q^{m}  B_1   -\frac{2}{(5-m)}   \int_\R yQ'  (A_{2}^I + \al_1 \Lambda Q) +\int_\R yQ^m(B_2^I +\frac 12 \beta_1 yQ) \nonu \\
& &  \qquad \qquad -\frac{4}{(m+3)} \int_\R yQ (B_2^I +\frac 12 \beta_1 yQ) + (m-1)  \int_\R Q^{m-1} B_1(A_2^{I} + \al_1 \Lambda Q) \nonu \\
& & \qquad \qquad + (m-1) \int_\R Q^{m-1} A_1  (B_2^{I} + \frac 12 \beta_1 y Q) \Big];
\eea
\be\label{et2}
\eta_2  := -\frac{1}{2\theta M[Q]} \Big[  -\frac{2}{(5-m)}   \int_\R yQ'  (A_{2}^{II} + \al_2 \Lambda Q) %+  \frac{2}{(5-m)}  \int_\R yQ''  (B_{2}^I + \frac{\beta_1}{2} y Q) \\
%& & \qquad \qquad 
+ (m-1) \int_\R Q^{m-1} B_1 (A_2^{II} + \al_2 \Lambda Q)   \Big];
\ee
\bea\label{et3}
\eta_3 & := & -\frac{1}{2\theta M[Q]}\Big[  -\frac{2}{(5-m)}  \int_\R yQ'  (A_{2}^{III} + \al_3 \Lambda Q) +\int_\R yQ^m(B_2^{II} +\frac 12 \beta_2 yQ) \nonu \\
& & \qquad \qquad  -\frac{4}{(m+3)} \int_\R yQ (B_2^{II} +\frac 12 \beta_2 yQ)  +  (m-1)  \int_\R Q^{m-1} B_1 (A_2^{III} + \al_3 \Lambda Q) \nonu \\
& &  \qquad \qquad +  (m-1) \int_\R Q^{m-1}A_1(B_2^{II} +\frac 12 \beta_2 yQ)  + (m-1) \int_\R yQ^{m-1}  A_1B_1 \nonu\\
& & \qquad \qquad   +  \frac12 (m-1)(m-2) \int_\R Q^{m-2} A_1^2 B_1 \Big];
\eea
and
\bea\label{et4}
\eta_4 & := & -\frac{1}{2\theta M[Q]} \Big[   -\frac{2}{(5-m)}  \int_\R yQ'  (A_{2}^{IV} + \al_4 \Lambda Q) + (m-1)  \int_\R Q^{m-1}  B_{1} (A_2^{IV} +\al_4 \Lambda Q)  \nonu\\ %+ \frac{2 }{(5-m)}  \int_\R yQ''  (B_{2}^{II} + \frac{\beta_2}{2} y Q)   \\
%& & \qquad \qquad 
 & & \qquad \qquad  + \frac 12  (m-1)  \int_\R Q^{m-2} B_1^3 \Big].
\eea

\medskip

\noindent
{\bf Step 3. Auxiliary functions.} We want to simplify the expressions for $\delta_j$ and $\eta_j$. Let $ \tilde Y_j, \tilde Z_j\in \mathcal S(\R)$, $j=1,2$, be the following functions:
\be\label{tYj}
\begin{cases}
\ds{\tilde Y_1 := myQ^{m-1} Q' -\frac 4{m+3} yQ' + m(m-1)Q^{m-2}Q' A_1 ,} \\
\ds{ \tilde Y_2 := -\frac 2{5-m} yQ' + (m-1)Q^{m-1}B_1;}
\end{cases}
\ee
and
\be\label{tZj}
\begin{cases}
\ds \tilde Z_1 := \frac 2{5-m}yQ'' + (m-1)Q^{m-2}Q' B_1,  \\
\ds \tilde Z_2:=yQ^m -\frac 4{m+3} yQ +(m-1) Q^{m-1}A_1.
\end{cases}
\ee
Notice that $\tilde Y_j$ is even and $\tilde Z_j$ is odd. 

\begin{lem}[Inverse functions]\label{IF}
There are unique even functions $Y_j \in \mathcal S(\R)$ and odd functions $Z_j\in \mathcal S(\R)$ such that
$$
\mathcal L_+Y_j =  \tilde Y_j, \quad\hbox{ and } \quad  \mathcal L_- Z_j  =\tilde Z_j, \quad j=1,2;
$$
and
$$
\int_\R Y_jQ' =\int_\R Z_jQ =0.
$$
Moreover, one has, 
\be\label{Y1}
Y_1 = A_1' +\frac 1{m-1}Q -\frac 4{m+3} \Lambda Q; \qquad  Y_2 = -B_1 - \frac 1{5-m}\Lambda Q; 
\ee
and
\be\label{Z1}
Z_1 = -\frac 1{2(5-m)} (y^2 Q' -yQ +\chi Q') = B_1' + \frac{3yQ}{2(5-m)}, \qquad  Z_2 = A_1,
\ee
as well-defined Schwartz functions with the corresponding parity properties.
\end{lem}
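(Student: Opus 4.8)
The plan is to get existence, uniqueness and regularity for free from Lemma \ref{Linear}, and then to \emph{verify} the explicit formulas (\ref{Y1})--(\ref{Z1}) by applying $\mathcal{L}_\pm$ to the proposed expressions. For the first part, observe that each $\tilde Y_j$ in (\ref{tYj}) is even while $Q'$ is odd, so $\int_\R \tilde Y_j Q' = 0$ automatically; likewise each $\tilde Z_j$ in (\ref{tZj}) is odd while $Q$ is even, so $\int_\R \tilde Z_j Q = 0$. Hence the solvability conditions of Lemma \ref{Linear}(2) are met, and we obtain unique $Y_j$ with $\int_\R Y_j Q'=0$ and unique $Z_j$ with $\int_\R Z_j Q=0$; the parity statement in Lemma \ref{Linear}(2) forces $Y_j$ even and $Z_j$ odd, and part (3) yields $Y_j,Z_j\in\mathcal{S}(\R)$.

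It then remains to check that the functions written in (\ref{Y1})--(\ref{Z1}) actually solve the respective equations, since uniqueness will identify them with the abstract solutions just produced (each candidate has the correct parity, hence the correct orthogonality). The tools I would use are: the identities (\ref{LA1B1}) for $\mathcal{L}_+ A_1$ and $\mathcal{L}_- B_1$; the relation $\mathcal{L}_+(\Lambda Q)=-Q$ from (\ref{LaQc}) together with $\mathcal{L}_+ Q = -(m-1)Q^m$, which follows from the ground-state ODE $Q''=Q-Q^m$; the pointwise identity $\mathcal{L}_- - \mathcal{L}_+ = (m-1)Q^{m-1}$; and the commutators $[\partial_y,\mathcal{L}_+]=-m(m-1)Q^{m-2}Q'$ and $[\partial_y,\mathcal{L}_-]=-(m-1)Q^{m-2}Q'$ (as multiplication operators), needed to treat the derivative terms $A_1'$ and $B_1'$.

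Two of the four cases are immediate: $\mathcal{L}_- Z_2 = \mathcal{L}_- A_1 = \mathcal{L}_+ A_1 + (m-1)Q^{m-1}A_1 = \tilde Z_2$ by (\ref{LA1B1}), and for $Y_2$ one expands $\mathcal{L}_+(-B_1 - \tfrac{1}{5-m}\Lambda Q)$ using $\mathcal{L}_+ B_1 = \mathcal{L}_- B_1 - (m-1)Q^{m-1}B_1$ and (\ref{LaQc}); after substituting $\Lambda Q = \tfrac{1}{m-1}Q + \tfrac12 yQ'$ the multiples of $Q$ cancel identically (the coefficient $-\tfrac{4}{(5-m)(m-1)} + \tfrac{1}{m-1} + \tfrac{1}{5-m}$ vanishes), leaving exactly $\tilde Y_2$. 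For the derivative cases I would first commute, $\mathcal{L}_+(A_1') = \partial_y(\mathcal{L}_+ A_1) + m(m-1)Q^{m-2}Q'A_1$ and $\mathcal{L}_-(B_1') = \partial_y(\mathcal{L}_- B_1) + (m-1)Q^{m-2}Q'B_1$; inserting (\ref{LA1B1}) and adding the remaining contributions $\mathcal{L}_+(\tfrac{1}{m-1}Q - \tfrac{4}{m+3}\Lambda Q)$, respectively $\mathcal{L}_-(\tfrac{3yQ}{2(5-m)}) = -\tfrac{3}{5-m}Q'$ (which uses $\mathcal{L}_-(yQ)=-2Q'$, itself a consequence of the ODE), the genuinely nonlocal terms $m(m-1)Q^{m-2}Q'A_1$ and $(m-1)Q^{m-2}Q'B_1$ survive while all spurious multiples of $Q$, $Q^m$ and $Q'$ cancel — directly for $Y_1$, and via the identity $4 = (m-1)+(5-m)$ for the bare $Q'$ terms of $Z_1$ — giving $\tilde Y_1$ and $\tilde Z_1$. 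Finally I would record that the two expressions for $Z_1$ in (\ref{Z1}) coincide by substituting $B_1 = -\tfrac{1}{2(5-m)}(y^2+\chi)Q$ from (\ref{AB1}) and differentiating. The only real obstacle is bookkeeping: arranging the commutator terms so that the local pieces cancel exactly, which hinges on the precise ground-state ODE and the exact coefficients in (\ref{AB1})--(\ref{xichi}); there is no conceptual difficulty once the commutator identities are in hand.
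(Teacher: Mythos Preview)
Your proposal is correct and follows essentially the same route as the paper: existence and uniqueness come from Lemma~\ref{Linear} via the parity-based orthogonality, and the explicit formulas are verified by direct application of $\mathcal{L}_\pm$ using the identities $\mathcal{L}_+ Q=-(m-1)Q^m$, $\mathcal{L}_+\Lambda Q=-Q$, $\mathcal{L}_-(yQ)=-2Q'$, and (\ref{LA1B1}). Your commutator formulation $[\partial_y,\mathcal{L}_\pm]$ is exactly the paper's device of differentiating the equations $\mathcal{L}_+A_1=\cdots$ and $\mathcal{L}_-B_1=\cdots$, so the arguments are the same up to notation.
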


\begin{proof}
The existence and uniqueness of such a functions are consequence of Lemma \ref{Linear}(2). Let us prove (\ref{Y1}) and (\ref{Z1}). By simple inspection, $\mathcal L_- Q =0$, and (\ref{LaQc}),
\be\label{facts}
\mathcal L_+ Q =  \mathcal L_-Q -(m-1) Q^{m} =-(m-1) Q^m; \qquad \mathcal L_+ \Lambda Q = -Q, \qquad \mathcal L_- (yQ) = -2Q'.
\ee
On the other hand, since  
\be\label{ABAB}
\mathcal L_+ A_1 = yQ (Q^{m-1}  -\frac 4{m+3}) \qquad \hbox{ and } \qquad \mathcal L_- B_1 =\frac{4}{5-m}\Lambda Q -\frac 1{m-1}Q,
\ee
(cf. (\ref{LA1B1})), taking derivative in both equations we get
\bea\label{dA}
\mathcal L_+ A_1' & =&  (yQ)' (Q^{m-1}  -\frac 4{m+3}) + yQ (Q^{m-1})' + m (Q^{m-1})' A_1 \nonu\\
& =&  Q^m  -\frac 4{m+3}Q + m yQ^{m-1}Q' -\frac 4{m+3}yQ'  + m(m-1) Q^{m-2}Q' A_1.
\eea
and
\bea\label{dB}
\mathcal L_- B_1' & = & \frac{4}{5-m}\Lambda Q' -\frac 1{m-1}Q'  +  (Q^{m-1})' B_1 \nonu\\
&  = & \frac 2{5-m} yQ'' + \Big[\frac {4}{(m-1)(5-m)} + \frac {2}{5-m} -\frac 1{m-1}\Big]Q' +  (m-1)Q^{m-2}Q' B_1\nonu \\
& =& \frac 2{5-m} yQ'' +  \frac {3}{5-m} Q' +  (m-1)Q^{m-2}Q' B_1.
\eea
Now we prove the first assertion in (\ref{Y1}). From (\ref{facts}) and (\ref{dA}),
\bee
\mathcal L_+ Y_1 &=& \mathcal L_+ A_1' +\frac 1{m-1} \mathcal L_+ Q -\frac 4{m+3} \mathcal L_+ \Lambda Q\\
& =& Q^m  -\frac 4{m+3}Q + m yQ^{m-1}Q' -\frac 4{m+3}yQ'  + m(m-1) Q^{m-2}Q' A_1 -Q^m + \frac 4{m+3} Q\\
& =& m yQ^{m-1}Q' -\frac 4{m+3}yQ'  + m(m-1) Q^{m-2}Q' A_1\ = \tilde Y_1.
\eee
Similarly, from (\ref{ABAB})
\bee
\mathcal L_+ Y_2 & = & -\mathcal L_+ B_1 - \frac 1{5-m} \mathcal  L_+ \Lambda Q \\
& =& -\mathcal L_- B_1+ (m-1) Q^{m-1}B_1 + \frac 1{5-m}  Q\\
& =&  -\frac{4}{5-m}\Lambda Q + \frac 1{m-1}Q + (m-1) Q^{m-1}B_1  +\frac 1{5-m} Q\\
&  =& -\frac 2{5-m} yQ' + (m-1) Q^{m-1}B_1\ = \tilde Y_2.
\eee
On the other hand, using (\ref{dB}),
\bee
\mathcal L_- Z_1 & =&  \mathcal L_- B_1' +\frac 3{2(5-m)} \mathcal L_-(yQ) \\
& =&  \frac 2{5-m} yQ'' +  \frac {3}{5-m} Q' +  (m-1)Q^{m-2}Q' B_1 - \frac 3{5-m} Q' \ =  \tilde Z_1.
\eee
Finally, for $Z_2 = A_1$,
\bee
\mathcal L_- A_1 & =&  \mathcal L_+ A_1 + (m-1) Q^{m-1}A_1\\
&  =&  yQ (Q^{m-1}  -\frac 4{m+3})+(m-1) Q^{m-1}A_1\ =  \tilde Z_2.
\eee
\end{proof}

\medskip

\noindent
{\bf Step 4.} Now, we prove the following 
\begin{lem}\label{albet}
Let $\al_j,\beta_j\in \R$ be the parameters defined in (\ref{albe}). Then
\be\label{al12}
\al_1 = \frac{1}{2(m+1)M[Q]}\int_\R y^2 Q^{m+1} >0, \qquad \al_2= -\frac 1{4\theta M[Q]}\int_\R yQ' B_1, %=-\frac{(m-1)}{2M[Q]}\int_\R y^2 Q^2<0;
\ee
\bea\label{al33}
\al_3 & = & \frac{1}{2\theta M[Q]} \Big[\frac m{m-1} \int_\R yQ^m A_1 -\frac 4{(m-1)(m+3)} \int_\R yQ A_1 +\frac m2 \int_\R y^2 Q' Q^{m-1}A_1  \nonu \\
& &  -\frac 2{m+3} \int_\R y^2 Q' A_1 +\frac m2 \int_\R Q^{m-1}A_1^2+\frac m4(m-1) \int_\R yQ^{m-2}Q'A_1^2 -\frac 4{m+3} \int_\R yQ' B_1\Big], \nonu\\
& &
\eea
and 
\bea\label{al44}
\al_4 & = &  \frac{1}{2\theta M[Q]} \Big[ \frac 12 \int_\R Q^{m-1}B_1^2 +\frac 14(m-1) \int_\R yQ^{m-2}Q' B_1^2 + \frac {13m-8-m^2}{2(5-m)(m-1)}  \int_\R yQ' B_1\nonu\\
& & \qquad \qquad +\frac 1{5-m} \int_\R y^2 Q'' B_1 \Big].%\frac{(m-1)}{ (5-m)^3 M[Q]} \Big[ \frac 14\int_\R y^4 Q^2 +( \frac 14 \chi -3 -\frac 12 m +\frac{5m-3}{m-1}) \int_\R y^2 Q^2 \Big].
\eea
One the other hand,
\be\label{be1}
\beta_1  = -\frac1{M[Q]} \int_\R yQ A_1, \qquad \beta_2  = \frac 1{M[Q]}   \Big[Ê2\int_\R A_1B_1' +\frac 5{5-m} \int_\R yQA_1 \Big], % \frac{-1}{m+3}(\frac{7+m}{2(m-1)} +4\chi)>0,
\ee
%and
%\bea\label{be2}
%
%%-\frac 1{M[Q]} \Big[ \frac{m-10}{5-m} \int_\R yQ A_1 +\int_\R y^2 Q^m B_1 -\frac 4{m+3} \int_\R y^2 Q B_1  -\frac 2{5-m} \int_\R y^2 Q' A_1 \nonu \\
%%& & \qquad \qquad +(m-1) \int_\R yQ^{m-1}A_1B_1 \Big], % -\frac{2(m-1)}{ (5-m)^3 M[Q]} \Big[ \frac 14\int_\R y^4 Q^2 +( \frac 14 \chi -3 -\frac 12 m +\frac{5m-3}{m-1}) \int_\R y^2 Q^2 \Big].
%\eea
for $m\in [3, 5)$.
\end{lem}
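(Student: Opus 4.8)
The plan is to compute each of the six constants directly from its definition in (\ref{albe}), inserting the explicit source terms $F_2^{(\cdot)}$ and $G_2^{(\cdot)}$ of (\ref{F21})--(\ref{G22}) together with the closed form $\Lambda Q = \frac1{m-1}Q + \frac12 yQ'$, which is the $c=1$ case of (\ref{LaQc}). The recurring toolkit is: the profile equation $Q'' = Q - Q^m$; integration by parts; the orthogonality relations $\int_\R A_1 Q = \int_\R A_1 Q' = \int_\R B_1 Q = \int_\R B_1 Q' = 0$ from (\ref{OO}); and the two identities $\mathcal L_+ A_1 = yQ(Q^{m-1}-\frac4{m+3})$, $\mathcal L_- B_1 = \frac4{5-m}\Lambda Q - \frac1{m-1}Q$ of (\ref{ABAB}). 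The single arithmetic fact that makes all prefactors collapse is $\theta = \frac1{m-1}-\frac14 = \frac{5-m}{4(m-1)}$, so that factors like $2\theta M[Q]$, $\frac{4}{m+3}$ and $\frac{1}{5-m}$ recombine into the clean coefficients of the statement.

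I would first dispatch the three easy parameters. For $\al_1$, substituting $F_2^I = \frac12 y^2 Q^m$ gives $\frac12\int_\R(\frac1{m-1}Q + \frac12 yQ')y^2 Q^m$; writing $y^3 Q' Q^m = \frac{1}{m+1}y^3(Q^{m+1})'$ and integrating by parts turns everything into a multiple of $\int_\R y^2 Q^{m+1}$, whose coefficient reduces to $\frac{1}{2(m+1)}$ after using $\theta = \frac{5-m}{4(m-1)}$; this yields (\ref{al12}) and its positivity. For $\al_2 = -\frac{1}{2\theta M[Q]}\int_\R \Lambda Q\, B_1$ the $Q$-part of $\Lambda Q$ drops out because $\int_\R Q B_1 = 0$ --- which is exactly the content of the choice of $\chi$ in (\ref{xichi}), equivalently (\ref{OO}) --- leaving only $-\frac{1}{4\theta M[Q]}\int_\R yQ' B_1$. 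For $\beta_1$ there is nothing to simplify, since $G_2^I = A_1$ already gives $\beta_1 = -\frac1{M[Q]}\int_\R yQ A_1$.

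The substance lies in $\al_3,\al_4,\beta_2$. Here I would expand $\int_\R \Lambda Q\, F_2^{III}$, $\int_\R \Lambda Q\, F_2^{IV}$ and $\int_\R yQ\, G_2^{II}$ term by term, each time splitting $\Lambda Q$ into its $Q$ and $\frac12 yQ'$ pieces and again killing every $\int_\R Q B_1$ contribution. The terms carrying a derivative --- $\frac2{5-m}yB_1'$ in $F_2^{IV}$ and $\frac2{5-m}yA_1'$ in $G_2^{II}$ --- must be integrated by parts, for instance $\int_\R y^2 Q A_1' = -\int_\R (2yQ + y^2 Q')A_1$, after which parity discards the odd integrals and the ODE trades the $Q''$ and $(Q^{m-1})'$ factors for lower order. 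Collecting the surviving integrals and repeatedly invoking $\int_\R B_1 Q = \int_\R A_1 Q' = 0$ produces the forms (\ref{al33}), (\ref{al44}) and the short $\beta_2 = \frac1{M[Q]}\bigl[2\int_\R A_1 B_1' + \frac5{5-m}\int_\R yQ A_1\bigr]$ of (\ref{be1}).

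The main obstacle is precisely the heavy cancellation hidden in $\beta_2$: the naive expansion of $\int_\R yQ\, G_2^{II}$ produces the five unrelated integrals $\frac{m-6}{5-m}\int_\R yQ A_1$, $\int_\R y^2 Q^m B_1$, $-\frac4{m+3}\int_\R y^2 Q B_1$, $\frac2{5-m}\int_\R y^2 Q A_1'$ and $(m-1)\int_\R yQ^{m-1}A_1 B_1$, which must conspire to collapse onto only $\int_\R A_1 B_1'$ and $\int_\R yQ A_1$. Forcing this collapse needs more than the abstract $\mathcal L_\pm$-relations: one must bring in the \emph{explicit} profiles (\ref{AB1}) for $A_1$ and $B_1$ and the virial/Pohozaev identities for $Q$ that relate $\int_\R Q^2$, $\int_\R y^2 Q^2$ and $\int_\R y^2 Q^{m+1}$ through the ODE. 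Each individual manipulation is elementary, but keeping the many integration-by-parts steps consistent --- with the correct parities, signs and $(5-m),(m+3)$ denominators --- is the genuinely delicate, error-prone core of the computation, and is why the proof is deferred to this dedicated section.
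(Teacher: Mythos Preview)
Your plan for $\al_1,\al_2,\al_3,\al_4$ and $\beta_1$ is exactly what the paper does: expand $\Lambda Q=\frac1{m-1}Q+\frac12 yQ'$, integrate by parts, and use $\int_\R B_1Q=0$ to drop the $Q$-contributions. The formulas (\ref{al12})--(\ref{al44}) and the first half of (\ref{be1}) fall out just as you describe.

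Where you diverge is $\beta_2$. You claim the five-term expansion of $\int_\R yQ\,G_2^{II}$ cannot be collapsed using only the abstract $\mathcal L_\pm$-relations and that one must resort to the explicit profiles (\ref{AB1}) and Pohozaev-type identities. The paper shows this is not so: the three terms $\int_\R y^2Q^mB_1-\frac4{m+3}\int_\R y^2QB_1+(m-1)\int_\R yQ^{m-1}A_1B_1$ are recognized as $\int_\R yB_1\,\tilde Z_2$ with $\tilde Z_2=yQ^m-\frac4{m+3}yQ+(m-1)Q^{m-1}A_1$, and by (\ref{tZj})--(\ref{Z1}) one has $\tilde Z_2=\mathcal L_-A_1$. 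Self-adjointness then gives $\int_\R yB_1\,\tilde Z_2=\int_\R\mathcal L_-(yB_1)\,A_1$, and the commutator identity $\mathcal L_-(yB_1)=y\mathcal L_-B_1-2B_1'$ together with $\mathcal L_-B_1=\frac4{5-m}\Lambda Q-\frac1{m-1}Q$ from (\ref{ABAB}) produces exactly the cancellations against the remaining two terms, leaving only $-\frac5{5-m}\int_\R yQA_1-2\int_\R A_1B_1'$. No explicit formula for $A_1$ or $B_1$ and no virial identity is invoked. Your brute-force route would presumably also succeed, but it is longer and your diagnosis of what is required is inaccurate.
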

\begin{proof}
The computation of $\al_1$ and $\al_2$ is direct form the definition in (\ref{albe}) and (\ref{F21}). On the other hand, from (\ref{F23}) and the formula $\Lambda Q = \frac 1{m-1} Q +\frac 12 yQ'$,
\bee
2\theta M[Q] \al_3 & = & \int_\R \Lambda Q \Big[(mQ^{m-1} -\frac 4{m+3}) yA_1  +\frac m2 (m-1) Q^{m-2}A_1^2 -\frac{8}{(m+3)}B_1\Big]\\
& =& \frac m{m-1} \int_\R yQ^m A_1 -\frac 4{(m-1)(m+3)} \int_\R yQ A_1 +\frac m2 \int_\R y^2 Q^{m-1}Q' A_1 \\
& &   -\frac 2{m+3} \int_\R y^2 Q' A_1 +\frac m2 \int_\R Q^{m-1}A_1^2+\frac 14m(m-1) \int_\R yQ^{m-2}Q'A_1^2 -\frac 4{m+3} \int_\R yQ' B_1.
\eee
Similarly, 
\bee
2\theta M[Q]\al_4 & =&  \int_\R \Lambda Q \Big[ \frac 12 (m-1) Q^{m-2}B_1^2 -\frac 2{5-m}yB_1'  - \frac {m-8}{5-m} B_1\Big] \\
& =& \frac 12 \int_\R Q^{m-1}B_1^2 +\frac 14(m-1) \int_\R yQ^{m-2}Q' B_1^2 + (\frac 2{5-m} -\frac{m-8}{5-m}) \int_\R \Lambda Q B_1\\
& & \qquad +\frac 2{5-m} \int_\R y\Lambda Q' B_1\\
& =&  \frac 12 \int_\R Q^{m-1}B_1^2 +\frac 14(m-1) \int_\R yQ^{m-2}Q' B_1^2 + \frac {10-m}{2(5-m)}  \int_\R yQ' B_1\\
& &  \qquad +\frac {m+1}{(5-m)(m-1)} \int_\R y Q' B_1 +\frac 1{5-m} \int_\R y^2 Q'' B_1\\ 
& =&  \frac 12 \int_\R Q^{m-1}B_1^2 +\frac 14(m-1) \int_\R yQ^{m-2}Q' B_1^2 + \frac {13m-8-m^2}{2(5-m)(m-1)}  \int_\R yQ' B_1\\
& & \qquad +\frac 1{5-m} \int_\R y^2 Q'' B_1.
\eee
We consider now the computation of the terms $\beta_1$ and $\beta_2$ in (\ref{be1}). It is easy to see that the expression for $\beta_1$ is satisfied. Finally,
\bee
-M[Q]\beta_2 & =&  \int_\R yQ\Big[ \frac{m-6}{5-m}A_1 +(Q^{m-1}-\frac 4{m+3}) yB_1  + \frac{2}{5-m}yA_1'  + (m-1)Q^{m-2} A_1 B_1 \Big] \\
& =& (\frac{m-6}{5-m} -\frac{4}{5-m}) \int_\R yQ A_1 +\int_\R y^2 Q^m B_1 -\frac 4{m+3} \int_\R y^2 Q B_1 \\
& & -\frac 2{5-m} \int_\R y^2 Q' A_1+(m-1) \int_\R yQ^{m-1}A_1B_1\\
& =& \frac{m-10}{5-m} \int_\R yQ A_1 +\int_\R y^2 Q^m B_1 -\frac 4{m+3} \int_\R y^2 Q B_1  -\frac 2{5-m} \int_\R y^2 Q' A_1\\
& & +(m-1) \int_\R yQ^{m-1}A_1B_1.
\eee
Now we use (\ref{tZj}), (\ref{Z1}) and (\ref{ABAB}). We have
\bee
-M[Q]\beta_2 & =&  \frac{m-10}{5-m} \int_\R yQ A_1   -\frac 2{5-m} \int_\R y^2 Q' A_1 +\int_\R y B_1 \tilde Z_2 \\
& =& \frac{m-10}{5-m} \int_\R yQ A_1   -\frac 2{5-m} \int_\R y^2 Q' A_1 +\int_\R \mathcal L_- (y B_1)A_1\\
& =& \frac{m-10}{5-m} \int_\R yQ A_1   -\frac 2{5-m} \int_\R y^2 Q' A_1 +\int_\R \Big[ y \big(\frac 4{5-m} \Lambda Q -\frac 1{m-1}Q \big) -2B_1' \Big] A_1\\
& =& -\frac{5}{5-m}\int_\R yQA_1 -2\int_\R A_1B_1'.
\eee
We are done.
\end{proof}

\medskip

\noindent
{\bf Step 5. Simplifications.} Coming back to the definition of $\delta_j$ given in (\ref{de1})-(\ref{de5}), we use now Lemmas \ref{IF} and \ref{albet}, (\ref{OO}),  and the equation $Q''=Q-Q^m$ in several opportunities, in order to obtain simplified expressions. Let us consider $\delta_2$ in (\ref{de2}).  It is direct to check that
$$
\delta_2  =   -\frac{2}{M[Q]}\Big[ \frac{m}{2} \int_\R y^2  Q^{m-1}Q'  A_1 +   \int_\R \tilde Y_1 (A_{2}^I + \al_1 \Lambda Q)  \Big].
$$
Using Lemma \ref{IF}, the self-adjointedness of $\mathcal L_+$, and (\ref{LA2B2}),
\bee
\delta_2 & = & -\frac{2}{M[Q]}\Big[ \frac{m}{2} \int_\R y^2  Q^{m-1}Q'  A_1 +   \int_\R Y_1\mathcal L_+(A_{2}^I + \al_1 \Lambda Q)  \Big] \\
&  =&  -\frac{2}{M[Q]}\Big[ \frac{m}{2} \int_\R y^2  Q^{m-1}Q'  A_1 +   \int_\R Y_1(F_{2}^I - \al_1  Q)  \Big].
\eee
Now we replace $Y_1$, $F_2^I$ and $\al_1$ using (\ref{Y1}), (\ref{F21}) and (\ref{al12}), we get
\bea
\delta_2 & =& -\frac{2}{M[Q]}\Big[ \frac{m}{2} \int_\R y^2  Q^{m-1}Q'  A_1 +   \int_\R A_1' F_{2}^I   +   \frac 1{m-1}\int_\R Q(F_{2}^I - \al_1  Q)  \Big] \nonu\\
& =& -\frac{2}{M[Q]}\Big[ -   \int_\R yQ^m A_1  +   \frac 1{m-1}\int_\R Q^2(\frac 12 y^2 Q^{m-1} - \al_1  Q)  \Big] \nonu\\
&  =& -\frac{2}{M[Q]}\Big[ -   \int_\R yQ^m A_1  +   \frac 1{2(m+1)}\int_\R y^2 Q^{m+1}  \Big]. \label{de2fin}
\eea
On the other hand, from (\ref{de3}) and (\ref{tYj})-(\ref{tZj}) one has
$$
\delta_3 = -\frac{2}{M[Q]}\Big[ \beta_1 \int_\R B_1 Q^{m}  +   \int_\R \tilde Y_1 (A_{2}^{II} + \al_2 \Lambda Q)  +  \int_\R \tilde Z_1(B_{2}^I + \frac{\beta_1}{2} y Q) \Big].
$$
Using Lemma \ref{IP} and (\ref{A2cmod}) with $c=1$, we get
\bee
\delta_3 & = & -\frac{2}{M[Q]}\Big[ \beta_1 \int_\R B_1 Q^{m}  +   \int_\R Y_1 \mathcal L_+(A_{2}^{II} + \al_2 \Lambda Q)  +  \int_\R Z_1 \mathcal L_-(B_{2}^I + \frac{\beta_1}{2} y Q) \Big]\\
& =&  -\frac{2}{M[Q]}\Big[ \beta_1 \int_\R B_1 Q^{m}  +   \int_\R Y_1 (F_{2}^{II} - \al_2 Q)  +  \int_\R Z_1 (G_{2}^I  -\beta_1 Q') \Big].
\eee
Now we replace $Y_1$  and use (\ref{albe}) to cancel out the terms with $\Lambda Q$ and $yQ$. Indeed,
\bee
\delta_3 & =&  -\frac{2}{M[Q]}\Big[ \beta_1 \int_\R B_1 Q^{m}  +   \int_\R A_1' F_{2}^{II} +\frac 1{m-1}\int_\R Q(F_2^{II} - \al_2 Q)   + \int_\R B_1' ( G_2^{I} -\beta_1Q') \Big] \\
& =& -\frac{2}{M[Q]}\Big[  \int_\R A_1' F_{2}^{II} +\frac 1{m-1}\int_\R Q(F_2^{II} - \al_2 Q)   + \int_\R B_1'  G_2^{I}  \Big].
\eee
In the last identity we have used the identity $Q''=Q-Q^m$ and (\ref{OO}) to cancel out the term with $\beta_1$. Finally, replacing $F_2^{II}$ and $\al_2$ (cf. (\ref{F21}) and (\ref{al12})),
\bea
\delta_3 & =& -\frac{2}{M[Q]}\Big[  \int_\R A_1' F_{2}^{II} - \frac { \al_2}{m-1}\int_\R  Q^2   + \int_\R B_1'  G_2^{I}  \Big]   \nonu\\
& =& -\frac{2}{M[Q]}\Big[  2\int_\R A_1 B_1' - \frac { \al_2}{m-1}\int_\R  Q^2  \Big] \ =  -\frac{2}{M[Q]}\Big[  2\int_\R A_1 B_1' + \frac {2}{5-m}\int_\R  yQ' B_1  \Big].  \label{de3fin} % \al_2= -\frac {2(m-1)}{(5-m) \int Q^2}\int_\R yQ' B_1 
\eea
We repeat the same analysis with $\delta_4$:
\bee
\delta_4 & = & -\frac{2}{M[Q]}\Big[ \int_\R Y_1 (F_{2}^{III} - \al_3  Q) + m (m-1) \int_\R yQ^{m-2} Q' A_1^2 \\
& &  \qquad \qquad+  \frac16 m(m-1)(m-2) \int_\R Q^{m-3} Q'A_1^3 \Big] \\
& =&  -\frac{2}{M[Q]}\Big[ \int_\R A_1' F_2^{III} +\frac 1{m-1}\int_\R Q(F_{2}^{III} - \al_3  Q) + m (m-1) \int_\R yQ^{m-2} Q' A_1^2 \\
& &  \qquad \qquad+  \frac16 m(m-1)(m-2) \int_\R Q^{m-3} Q'A_1^3 \Big].
\eee
Now we replace $F_2^{III}$ and $\al_3$ (cf. (\ref{F23}) and (\ref{al33})). After some computations, we get
\bea
\delta_4 & =&  -\frac{2}{M[Q]}\Big[  \frac 2{m+3}\int_\R A_1^2 +\frac 8{m+3} \int_\R A_1 B_1' +\frac 1{m-1}\int_\R yA_1 (mQ^m -\frac 4{m+3}Q) -\frac{\al_3}{m-1}\int_\R Q^2  \nonu\\
& &  \qquad \qquad  + \frac 12 m (m-1) \int_\R yQ^{m-2} Q' A_1^2 \Big] \nonu\\
& =& -\frac{2}{M[Q]}\Big[  \frac 2{m+3} \int_\R A_1^2+ \frac 8{m+3} \int_\R A_1B_1'   -\frac m{5-m} \int_\R yQ^m A_1  +\frac{4}{(5-m)(m+3)}\int_\R yQ A_1 \nonu\\
& & \qquad \qquad -\frac{2m}{5-m}\int_\R y^2 Q' Q^{m-1} A_1 + \frac{8}{(5-m)(m+3)}\int_\R y^2 Q' A_1   -\frac {2m}{5-m} \int_\R Q^{m-1}A_1^2\nonu\\
& & \qquad \qquad  -\frac{m(m-1)(m-3)}{2(5-m)} \int_\R yQ^{m-2} Q'  A_1^2 +\frac{16}{(5-m)(m+3)}\int_\R yQ' B_1 \Big]. \label{de4fin}
\eea
Finally,
\bee
\delta_5 & = & -\frac{2}{M[Q]}\Big[ \beta_2 \int_\R B_1 Q^{m}  +  \int_\R Y_1 \mathcal L_+(A_{2}^{IV} + \al_4 \Lambda Q)   + \int_\R Z_1 \mathcal L_-(B_{2}^{II} + \frac{\beta_2}{2} y Q)   \\
& & \qquad \qquad + (m-1) \int_\R yQ^{m-2} Q'  B_1^2 + \frac 12  (m-1)(m-2)  \int_\R Q^{m-3} Q' A_1B_1^2 \Big]\\
& =& -\frac{2}{M[Q]}\Big[ \beta_2 \int_\R B_1 Q^{m}  +  \int_\R Y_1(F_{2}^{IV} - \al_4 Q)   + \int_\R Z_1 (G_{2}^{II} - \beta_2 Q')   \\
& & \qquad \qquad + (m-1) \int_\R yQ^{m-2} Q'  B_1^2 + \frac 12  (m-1)(m-2)  \int_\R Q^{m-3} Q' A_1B_1^2 \Big]\\
& =& -\frac{2}{M[Q]}\Big[ \beta_2 \int_\R B_1 Q^{m}  +  \int_\R A_1' F_2^{IV} +\frac1{m-1} \int_\R Q(F_2^{IV} -\al_4 Q)  + \int_\R B_1' (G_2^{II} -\beta_2 Q')  \\
& & \qquad \qquad + (m-1) \int_\R yQ^{m-2} Q'  B_1^2 + \frac 12  (m-1)(m-2)  \int_\R Q^{m-3} Q' A_1B_1^2 \Big]\\
&=& -\frac{2}{M[Q]}\Big[   \int_\R A_1' F_2^{IV} +\frac1{m-1} \int_\R Q(F_2^{IV} -\al_4 Q)  + \int_\R B_1' G_2^{II}   \\
& & \qquad \qquad + (m-1) \int_\R yQ^{m-2} Q'  B_1^2 + \frac 12  (m-1)(m-2)  \int_\R Q^{m-3} Q' A_1B_1^2 \Big]
\eee
Now we replace $F_2^{IV}$ and $G_2^{II}$, using (\ref{F24}) and (\ref{G22}) respectively, to obtain, after some simplifications,
\bee
\delta_5 & =& -\frac{2}{M[Q]}\Big[  2\frac{(m-7)}{5-m} \int_\R A_1 B_1'  +\frac 12 \int_\R y Q^{m-2}Q' B_1^2 +\frac 2{m+3} \int_\R B_1^2 \\
& & \qquad \qquad   +\frac{2}{(m-1)(5-m)}\int_\R yQ' B_1  -\frac {\al_4}{m-1} \int_\R Q^2\Big]. 
\eee
The last step is to replace $\al_4$, using (\ref{al44}). The final result is
\bea
\delta_5 & =&-\frac{2}{M[Q]}\Big[  2\frac{(m-7)}{5-m} \int_\R A_1 B_1'  -\frac{(m-1)(m-3)}{2(5-m)} \int_\R y Q^{m-2}Q' B_1^2 + \frac 2{m+3} \int_\R B_1^2 \nonu\\
& & \qquad \qquad  -\frac 2{5-m} \int_\R Q^{m-1}B_1^2  -\frac{4}{(5-m)^2}\int_\R y^2Q'' B_1  + \frac {2(m-13)}{(5-m)^2} \int_\R yQ' B_1\Big] \label{de5fin}.
\eea
Now we consider the case of $\eta_j$'s, defined in (\ref{et1})-(\ref{et4}). Using the same arguments as before,
\bee
\eta_1 & = & -\frac{1}{2\theta M[Q]}\Big[ \frac{1}{2} \int_\R y^2  Q^{m}  B_1  +  \int_\R Y_2  \mathcal L_+(A_{2}^I + \al_1 \Lambda Q) + \int_\R A_1 \mathcal L_-(B_2^I +\frac 12 \beta_1 yQ) \Big] \\
& =& -\frac{1}{2\theta M[Q]}\Big[ \frac{1}{2} \int_\R y^2  Q^{m}  B_1  +  \int_\R Y_2  (F_{2}^I - \al_1  Q) + \int_\R A_1 (G_2^I  - \beta_1 Q') \Big] \\
& =& -\frac{1}{2\theta M[Q]}\Big[ \frac{1}{2} \int_\R y^2  Q^{m}  B_1  -  \frac 12\int_\R B_1 y^2 Q^m  + \int_\R A_1^2  \Big]  = -\frac{1}{2\theta M[Q]} \int_\R A_1^2.  
\eee
On the other hand,
\bee
\eta_2 & = &-\frac{1}{2\theta M[Q]}    \int_\R Y_2  \mathcal L_+(A_{2}^{II} + \al_2 \Lambda Q)  = -\frac{1}{2\theta M[Q]}  \int_\R Y_2  (F_{2}^{II} - \al_2  Q)  = -\frac{1}{2\theta M[Q]}   \int_\R B_1^2 .
\eee
Concerning $\eta_3$,
\bee
\eta_3 & = & -\frac{1}{2\theta M[Q]}\Big[   \int_\R Y_2  \mathcal L_+(A_{2}^{III} + \al_3 \Lambda Q) +\int_\R A_1 \mathcal L_- (B_2^{II} +\frac 12 \beta_2 yQ)  + (m-1) \int_\R yQ^{m-1}  A_1B_1\\
& & \qquad \qquad   +  \frac12 (m-1)(m-2) \int_\R Q^{m-2} A_1^2 B_1 \Big]\\
& = & -\frac{1}{2\theta M[Q]}\Big[   \int_\R Y_2  (F_{2}^{III} - \al_3  Q) +\int_\R A_1 G_2^{II}  + (m-1) \int_\R yQ^{m-1}  A_1B_1\\
& & \qquad \qquad   +  \frac12 (m-1)(m-2) \int_\R Q^{m-2} A_1^2 B_1 \Big] \\
& =&  -\frac{1}{2\theta M[Q]}\Big[  - \int_\R B_1  F_{2}^{III}  +\int_\R A_1 G_2^{II}  + (m-1) \int_\R yQ^{m-1}  A_1B_1 \\
& & \qquad \qquad   +  \frac12 (m-1)(m-2) \int_\R Q^{m-2} A_1^2 B_1 \Big] \\
& =& -\frac{1}{2\theta M[Q]}\Big[  \frac 8{m+3} \int_\R B_1^2 + \frac{m-7}{5-m}\int_\R A_1^2\Big]\\
\eee
and
\bee
\eta_4 & = & -\frac{1}{2\theta M[Q]} \Big[  \int_\R Y_2  \mathcal L_+(A_{2}^{IV} + \al_4 \Lambda Q)  + \frac 12  (m-1)  \int_\R Q^{m-2} B_1^3 \Big]\\
& =&  -\frac{1}{2\theta M[Q]} \Big[  -\int_\R B_1  F_{2}^{IV}    + \frac 12  (m-1)  \int_\R Q^{m-2} B_1^3 \Big] = \frac{(9-m)}{2(5-m)\theta M[Q]} \int_\R B_1^2.
\eee
In conclusion, we have
\bea\label{etas1}
 \eta_1 =-\frac{1}{2\theta M[Q]} \int_\R A_1^2, & \qquad &  \eta_2 = -\frac{1}{2\theta M[Q]}   \int_\R B_1^2,\\
 \eta_3 = -\frac{1}{2\theta M[Q]}\Big[  \frac 8{m+3} \int_\R B_1^2 + \frac{m-7}{5-m}\int_\R A_1^2\Big], & \qquad&  \eta_4 =\frac{(9-m)}{2(5-m)\theta M[Q]} \int_\R B_1^2. \label{etas2}
\eea

\bigskip

\section{Proof of (\ref{posi}) and (\ref{kTe})}\label{B}

We start with the following 

\begin{Cl}\label{Claim0}

Let $q>0$. Then
$$
\int_\R \frac{a'a''}{a^{q+1}}(s)ds  = \frac 12(q+1) \int_\R \frac{a'^3}{a^{q+2}}(s)ds, \qquad  \int_\R \frac{a^{(3)}}{a^q}(s)ds  = \frac 12q(q+1) \int_\R \frac{a'^3}{a^{q+2}}(s)ds.
$$
\end{Cl}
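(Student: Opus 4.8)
The plan is to prove both identities by integration by parts, the whole point being that the decay hypotheses (\ref{ahyp}) make every integral absolutely convergent and force all boundary terms to vanish. Before anything else I would record the convergence: since $1<a(r)<2$ for all $r$, each denominator $a^{q}, a^{q+1}, a^{q+2}$ is bounded between two positive constants, so integrability is governed entirely by the numerators. By (\ref{ahyp}) one has $|a^{(k)}(s)|\leq Ke^{-\ga|s|}$ for $k=1,2,3$, hence $|a'a''|\leq K^2 e^{-2\ga|s|}$, $|(a')^3|\leq K^3 e^{-3\ga|s|}$, and $|a^{(3)}|\leq Ke^{-\ga|s|}$, all of which are integrable on $\R$. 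The same bounds, together with $a$ bounded away from zero, guarantee that $(a')^2 a^{-(q+1)}\to 0$ and $a''\, a^{-q}\to 0$ as $s\to\pm\infty$, which is all that is needed to discard boundary contributions.

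For the first identity I would simply exhibit it as a total derivative. Compute
\[
\frac{d}{ds}\Big(\frac{(a')^2}{a^{q+1}}\Big) = \frac{2a'a''}{a^{q+1}} - (q+1)\frac{(a')^3}{a^{q+2}}.
\]
Integrating over $\R$, the left-hand side yields $\big[(a')^2 a^{-(q+1)}\big]_{-\infty}^{+\infty}=0$ by the boundary analysis above, so
\[
0 = 2\int_\R \frac{a'a''}{a^{q+1}}(s)\,ds - (q+1)\int_\R \frac{(a')^3}{a^{q+2}}(s)\,ds,
\]
which is exactly the claimed relation after dividing by $2$.

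For the second identity I would integrate $\int_\R a^{(3)} a^{-q}\,ds$ by parts once, moving one derivative off $a^{(3)}$: using $\frac{d}{ds}(a^{-q})=-q\,a'a^{-q-1}$ and the vanishing of the boundary term $a''a^{-q}$, one obtains
\[
\int_\R \frac{a^{(3)}}{a^q}(s)\,ds = q\int_\R \frac{a'a''}{a^{q+1}}(s)\,ds,
\]
and substituting the first identity immediately gives $\frac12 q(q+1)\int_\R (a')^3 a^{-(q+2)}$. There is no genuine analytic obstacle here; the only thing requiring attention — and the step I would present most carefully — is the justification that every boundary term at $\pm\infty$ vanishes and every integral converges, which, as noted, is a direct consequence of the exponential decay of the derivatives of $a$ in (\ref{ahyp}) combined with the uniform two-sided bound $1<a<2$.
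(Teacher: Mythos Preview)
Your proof is correct and follows exactly the approach the paper indicates: integration by parts with the boundary terms vanishing by the decay hypotheses (\ref{ahyp}). The paper's own proof is a single line (``A direct consequence of integration by parts and (\ref{ahyp})''), so your version simply fills in the details that the paper leaves implicit.
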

\begin{proof}
A direct consequence of integration by parts and (\ref{ahyp}).
\end{proof}

\medskip

\noindent
{\bf Proof of (\ref{posi}).} From (\ref{ht}) it is enough to prove that 
\be\label{B1}
\ve\int_{-T_\ve}^{\tilde T_\ve} \Big[\frac{4a'(\ve \rho ) f_4 }{(5-m)a(\ve \rho)}  +  \frac{f_6}{c} \Big]ds = O(\ve).
\ee
Replacing $f_6$ from (\ref{f6}), we get
$$
\hbox{ l.h.s. of } (\ref{B1}) = \ve\int_{-T_\ve}^{\tilde T_\ve} \frac vc \Big[  (\eta_1  +\frac{v^2}{c}\eta_2 ) \frac{a'a''}{a^2}(\ve \rho) + (\eta_3  +\frac{v^2}{c} \eta_4) \frac{a'^3}{a^3}(\ve \rho) \Big]ds. 
$$ 
Additionally, using (\ref{vv}) and (\ref{cc}), we get
$$
\hbox{ l.h.s. of } (\ref{B1}) = \int_{-\ve^{-1/100}}^{\ve^{-1/100}}  \Big[  (\eta_1  +\frac{v^2}{c}\eta_2 )  \frac{a'a''}{a^{p+2}}(s) +(\eta_3  +\frac{v^2}{c} \eta_4)\frac{a'^3}{a^{p+3}}(s) \Big]ds + O(\ve). 
$$ 
where $p:= \frac 4{5-m}$. Applying the previous Lemma and the identity (\ref{v2c}),
\bee
\hbox{ l.h.s. of } (\ref{B1}) & = & \int_{-\ve^{-1/100}}^{\ve^{-1/100}}  \Big[ (\eta_1 + 4\la_0 \eta_2 )  \frac{a'a''}{a^{p+2}} + (\eta_3  +4\la_0 \eta_4) \frac{a'^3}{a^{p+3}}\Big](s)ds \\
& & + (v_0^2 -4\la_0) \int_{-\ve^{-1/100}}^{\ve^{-1/100}}  \Big[  \eta_2 \frac{a'a''}{a^{2p+2}}  +  \eta_4 \frac{a'^3}{a^{2p+3}} \Big](s)ds + O(\ve)  \\
& =& \frac 12\Big[ ( (p+2)\eta_1 + 2\eta_3) + 4\la_0( (p+2)\eta_2   + 2 \eta_4)\Big] \int_\R \frac{a'^3}{a^{p+3}}(s)ds \\
& & + (v_0^2 -4\la_0)((p+1)\eta_2   +  \eta_4) \int_\R  \frac{a'^3}{a^{2p+3}} (s)ds + O(\ve).
\eee
From  the previous section, more specifically (\ref{etas1})-(\ref{etas2}), we have
$$
(p+2)\eta_1 +2\eta_3 = -\frac{8}{\theta (m+3)M[Q]} \int_\R B_1^2,
$$
and $(p+1) \eta_2 + \eta_4 =0.$ Since 
\be\label{eta24}
(p+2) \eta_2 + 2\eta_4 = \frac{2}{\theta (5-m)M[Q]} \int_\R B_1^2,
\ee
we finally get ($\la_0 = (5-m)/(m+3)$)
$$
  (p+2)\eta_1 + 2\eta_3 + 4\la_0( (p+2)\eta_2   + 2 \eta_4) =0,
$$
as desired.

\medskip

\noindent
{\bf Proof of (\ref{kTe}).} We have to compute the following number:
\be\label{kTe1}
k(\tilde T_\ve) = 2 \ve \int_{-T_\ve}^{\tilde T_\ve} \Big[ \frac{8a'(\ve \rho(t))}{(m+3)a(\ve \rho(t))} h(s) v(s) -f_1(s)  f_4(s) + f_5(s)v(s) \Big]ds.
\ee
Replacing $h(t)$ using (\ref{ht}), and integrating by parts, we get
$$
k(\tilde T_\ve) = \frac{16 h(\tilde T_\ve)}{p(m+3)} + 2\ve \int_{-T_\ve}^{\tilde T_\ve} \Big[ \frac{-8a^p(\ve \rho(t))}{p(m+3)} \big(\frac{4a'(\ve \rho(s)) f_4(s)}{(5-m)a(\ve \rho(s))} +\frac{f_6(s)}{c(s)} \big) -f_1(s)  f_4(s) + f_5(s)v(s) \Big]ds.
$$
Note that from the previous computation $ h(\tilde T_\ve) \sim 0$ at higher order in $\ve$; therefore, replacing $f_4, f_5$ and $f_6$ from (\ref{f4})-(\ref{f6}), we obtain
\be\label{kTe2}
k(\tilde T_\ve) = 2\ve \int_{-T_\ve}^{\tilde T_\ve} \Big[ \delta_1 \frac{a^{(3)}}{a} + (\tilde \delta_2 + \tilde \delta_3 \frac{v^2}{c}) \frac{a'a''}{a^2} + (\tilde \delta_4 + \tilde \delta_5 \frac{v^2}{c})\frac{a'^3}{a^3}\Big] (\ve \rho(s)) v(s)ds,
\ee
where
$$
\tilde \delta_2 = \delta_2 - 2\la_0 \eta_1 - \frac{8\beta_1}{m+3}, \quad \tilde \delta_3 =\delta_3 -2\la_0 \eta_2;
$$
and
$$
\tilde \delta_4 = \delta_4 - 2\la_0 \eta_3 - \frac{8\beta_2}{m+3}, \quad \tilde \delta_5 =\delta_5 -2\la_0 \eta_4.
$$
We apply the same argument as in the proof of (\ref{posi}). Changing variables, using Claim \ref{Claim0} and the identity (\ref{v2c}) we get
\bea
k(\tilde T_\ve) & = & 2 \int_\R \Big[ \delta_1 \frac{a^{(3)}}{a} + (\tilde \delta_2 + 4\la_0\tilde \delta_3) \frac{a'a''}{a^2} + (\tilde \delta_4 +4\la_0 \tilde \delta_5)\frac{a'^3}{a^3}\Big] ds\nonu \\
& & + 2 (v_0^2 -4\la_0)\int_\R \Big[  \tilde \delta_3  \frac{a'a''}{a^{p+2}} + \tilde \delta_5 \frac{a'^3}{a^{p+3}}\Big] ds + o_\ve(1) \nonu\\
& =& 2 \Big[ \delta_1+ \tilde \delta_2 + 4\la_0\tilde \delta_3 + \tilde \delta_4 +4\la_0 \tilde \delta_5\Big]  \int_\R \frac{a'^3}{a^3} ds +  (v_0^2 -4\la_0)( (p+2) \tilde \delta_3 + 2\tilde \delta_5) \int_\R \frac{a'^3}{a^{p+3}} ds + o_\ve(1) \nonu\\
& =:& \hat \delta  \int_\R \frac{a'^3}{a^3} ds +  (v_0^2 -4\la_0)\bar\delta \int_\R \frac{a'^3}{a^{p+3}} ds + o_\ve(1), \label{final}
\eea
where $o_\ve(1) \to 0$ as $\ve\to 0$. We compute now the coefficient $\bar \delta$. One has
$$
\bar \delta  = (p+2)\delta_3 + 2 \delta_5  -2\la_0((p+2)\eta_2 +2\eta_4). 
$$
From (\ref{eta24}) we have
$$
\bar \delta  = (p+2)\delta_3 + 2 \delta_5  -\frac{16(m-1)}{(5-m)(m+3)M[Q]} \int_\R B_1^2. 
$$
Replacing $\delta_3$ and $\delta_5$ from (\ref{de3fin}) and (\ref{de5fin}), we get
\bea
\bar \delta & =&  -\frac 4{M[Q]} \Big[  2\frac{7-m}{5-m} \int_\R A_1 B_1' + \frac {2(7-m)}{(5-m)^2}\int_\R  yQ' B_1    + 2\frac{(m-7)}{5-m} \int_\R A_1 B_1'   \nonu\\
& & \qquad \qquad   -\frac{(m-1)(m-3)}{2(5-m)}  \int_\R y Q^{m-2}Q' B_1^2 + \frac 2{m+3} \int_\R B_1^2 -\frac 2{5-m} \int_\R Q^{m-1}B_1^2   \nonu\\
& & \qquad \qquad -\frac{4}{(5-m)^2}\int_\R y^2Q'' B_1  + \frac {2(m-13)}{(5-m)^2} \int_\R yQ' B_1  + \frac{4(m-1)}{(5-m)(m+3)} \int_\R B_1^2 \Big] \nonu\\
& =& -\frac 4{(5-m)M[Q]} \Big[ - \frac {12}{5-m}\int_\R  yQ' B_1 -\frac{4}{5-m}\int_\R y^2Q'' B_1+2 \int_\R B_1^2  \nonu\\
& & \qquad \qquad \qquad \qquad   -2 \int_\R Q^{m-1}B_1^2     -\frac12 (m-1)(m-3) \int_\R y Q^{m-2}Q' B_1^2      \Big]. \label{bd1}
\eea
Let us deal with the term $\hat \delta$. From the definition, one has
\be\label{hatdel}
\hat \delta = \delta_1 + \delta_2 +\delta_4 -2\la_0 (\eta_1+\eta_3) -\frac 8{m+3} (\beta_1+\beta_2) +4\la_0 (\delta_3 +\delta_5) +8\la_0^2 (\eta_2 +\eta_4).
\ee
First of all, from (\ref{de1}), (\ref{de2fin}) and (\ref{de4fin}), we have
\bee
& &  \delta_1 + \delta_2 +\delta_4 =\\
 & & =  -\frac {2}{M[Q]} \Big[ \frac {-1}{2(m+1)} \int_\R y^2 Q^{m+1}   -   \int_\R yQ^m A_1  +   \frac 1{2(m+1)}\int_\R y^2 Q^{m+1}  \\
& &  \qquad \qquad +\frac 2{m+3} \int_\R A_1^2 +\frac 8{m+3} \int_\R A_1B_1'  -\frac m{5-m} \int_\R yQ^m A_1  +\frac{4}{(5-m)(m+3)}\int_\R yQ A_1 \nonu\\
& & \qquad \qquad -\frac{2m}{5-m}\int_\R y^2 Q' Q^{m-1} A_1 + \frac{8}{(5-m)(m+3)}\int_\R y^2 Q' A_1   -\frac {2m}{5-m} \int_\R Q^{m-1}A_1^2\nonu\\
& & \qquad \qquad  -\frac{m(m-1)(m-3)}{2(5-m)} \int_\R yQ^{m-2} Q'  A_1^2 +\frac{16}{(5-m)(m+3)}\int_\R yQ' B_1 \Big] \\
& & =  -\frac {2}{M[Q]} \Big[ \frac 2{m+3} \int_\R A_1^2 +\frac 8{m+3} \int_\R A_1B_1'  -\frac 5{5-m} \int_\R yQ^m A_1  +\frac{4}{(5-m)(m+3)}\int_\R yQ A_1 \nonu\\
& & \qquad \qquad -\frac{2m}{5-m}\int_\R y^2 Q' Q^{m-1} A_1 + \frac{8}{(5-m)(m+3)}\int_\R y^2 Q' A_1   -\frac {2m}{5-m} \int_\R Q^{m-1}A_1^2\nonu\\
& & \qquad \qquad  -\frac{m(m-1)(m-3)}{2(5-m)} \int_\R yQ^{m-2} Q'  A_1^2 +\frac{16}{(5-m)(m+3)}\int_\R yQ' B_1 \Big].
\eee
On the one hand, using (\ref{etas1}) and (\ref{etas2}),
$$
-2\la_0 (\eta_1+\eta_3) = \frac{4(m-1)}{(m+3) M[Q]} \Big[  \frac 8{m+3} \int_\R B_1^2  - \frac 2{5-m} \int_\R A_1^2  \Big].
$$
Similarly, using (\ref{be1}),
$$
-\frac 8{m+3}(\beta_1 +\beta_2)  =  \frac{-8}{(m+3)M[Q]} \Big[Ê 2\int_\R A_1 B_1' + \frac{m}{5-m} \int_\R yQ A_1\Big].
$$
Now,
\bee
& & 4\la_0(\delta_3 +\delta_5)  =\\
& &  -\frac{8(5-m)}{(m+3)M[Q]}  \Big[-\frac 4{5-m} \int_\R A_1 B_1' -\frac {16}{(5-m)^2} \int_\R yQ' B_1 +\frac 2{m+3} \int_\R B_1^2  \\
& & \qquad \qquad \qquad\qquad   -\frac{(m-1)(m-3)}{2(5-m)} \int_\R yQ^{m-2}Q'B_1^2  -\frac 4{(5-m)^2} \int_\R y^2 Q'' B_1   -\frac{2}{5-m}\int_\R Q^{m-1} B_1^2 \Big].
\eee
Finally,
$$
8\la_0^2 (\eta_2+\eta_4)  =  \frac{64(m-1)}{(m+3)^2M[Q]}\int_\R B_1^2.
$$
Collecting the above identities, we get
$$
-2\la_0 (\eta_1+\eta_3) + 8\la_0^2 (\eta_2+\eta_4) = \frac{8(m-1)}{(m+3) M[Q]} \Big[  \frac {12}{m+3} \int_\R B_1^2  - \frac 1{5-m} \int_\R A_1^2  \Big],
$$
and finally, the terms of the form $\int_\R A_1 B_1'$ cancel each other, to obtain
\bea\label{hd1}
\hat \delta & =&   -\frac{2}{(5-m)M[Q]} \Big[   \frac{2(7-3m)}{m+3}\int_\R A_1^2 -5 \int_\R yQ^m A_1 +\frac{4(m+1)}{m+3} \int_\R yQ A_1 -2m\int_\R y^2 Q^{m-1}Q' A_1 \nonu\\
& & \qquad \qquad \qquad +\frac 8{m+3} \int_\R y^2 Q' A_1 -2m \int_\R Q^{m-1}A_1^2 -\frac12 m(m-1)(m-3)\int_\R yQ^{m-2}Q' A_1^2\Big] \nonu\\
& &  -\frac{8}{(m+3)M[Q]} \Big[  -\frac{12}{5-m} \int_\R yQ' B_1  -\frac{4}{5-m} \int_\R y^2 Q'' B_1  +\frac{2(11-7m)}{m+3}\int_\R B_1^2 \nonu\\
& &\qquad \qquad  \qquad   -2\int_\R Q^{m-1}B_1^2  -\frac 12(m-1)(m-3)\int_\R y Q^{m-2}Q'B_1^2 \Big] \nonu\\
& =&  -\frac{2}{(5-m)M[Q]} \Big[   \frac{2(7-3m)}{m+3}\int_\R A_1^2 -5 \int_\R yQ^m A_1 +\frac{4(m+1)}{m+3} \int_\R yQ A_1 -2m\int_\R y^2 Q^{m-1}Q' A_1 \nonu\\
& & \qquad \qquad \qquad +\frac 8{m+3} \int_\R y^2 Q' A_1 -2m \int_\R Q^{m-1}A_1^2 -\frac12 m(m-1)(m-3)\int_\R yQ^{m-2}Q' A_1^2\Big] \nonu\\
& &  + 2\la_0 \bar \delta + \frac{128 (m-1)}{(m+3)^2M[Q]} \int_\R B_1^2 .
\eea

Now we state some well-known identities satisfied by the soliton $Q$. For the proofs, see \cite{MMcol1} and \cite{Mu1}.  

\begin{lem}\label{IdQ} Suppose $m>1$ and denote by $Q_c(x) := c^{\frac 1{m-1}} Q(\sqrt{c} x)$ the scaled soliton, with $Q$ solution of $-Q'' +Q -Q^m=0$ in $\R$. Then
\ben
%\item \emph{Energy}. Let $E_1 [u]:= E_{a\equiv 1}[u]$. Then
%$$
%E_1[Q]= -\frac 12  \la_0\int_\R Q^2 =-\la_0M[Q],  \qquad \hbox{with }\quad   \la_0 = \frac{5-m}{m+3}.
%$$
\item \emph{Integrals}. Let $\theta := \frac 1{m-1} -\frac 14$. Then
\be\label{QQ2}
\int_\R Q_c = c^{\theta-\frac 14} \int_\R Q, \quad \int_\R Q_c^{2} = c^{2\theta} \int_\R Q^2.
\ee
and finally
\be\label{Qm}
 \int_\R Q_c^{m+1} = \frac{2(m+1)c^{2\theta +1}}{m+3} \int_\R Q^2,  \; \int_\R \Lambda Q_c = (\theta -\frac 14) c^{\theta -\frac 54} \int_\R Q, 
\ee
\be\label{LQQ}
\int_\R \Lambda Q_c Q_c =\theta c^{2\theta -1} \int_\R Q^2.
\ee
\item \emph{Integrals with powers}.
\be\label{y2Qm}
\int_\R Q'^2 = \frac{m-1}{m+3}\int_\R Q^2, \quad \int_\R y^2 Q^{m+1} = \frac{m+1}{m+3}\big[2\int_\R y^2 Q^2 -\int_\R Q^2 \big],
\ee
and
\be\label{y4Qm}
\int_\R y^4 Q^{m+1} = \frac{m+1}{m+3}\big[2\int_\R y^4 Q^2 - 6\int_\R y^2 Q^2 \big],
\ee
\be\label{y2Qp}
\int_\R y^2 Q'^2 = \frac 1{m+3}\big[Ê2\int_\R Q^2 +(m-1)\int_\R y^2 Q^2 \big],
\ee
\be\label{y4Qp}
\int_\R y^4 Q'^2 = \frac{1}{m+3}\big[Ê12\int_\R y^2 Q^2 +(m-1)\int_\R y^4 Q^2\big].
\ee
%\be\label{Q2m}
%\int_\R Q^{2m} = 2\frac{(m+1)^3}{(1+3m)(m+3)} \int_\R Q^2,
%\ee
%\be\label{y2Q2m}
% \int_\R y^2 Q^{2m} =\frac{(m+1)}{(m+3)(1+3m)} \Big[Ê(5-m)(3-m)\int_\R Q^2 +2(m+1)^2 \int_\R y^2 Q^2\Big].
%\ee
%and finally
%\be\label{y4Q2m}
% \int_\R y^4 Q^{2m} =\frac{(m+1)}{(m+3)(1+3m)} \Big[Ê(5-m)(3-m)\int_\R Q^2 +2(m+1)^2 \int_\R y^2 Q^2\Big].
%\ee
\een
\end{lem}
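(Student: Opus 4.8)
The plan is to separate the two groups of identities and handle everything by elementary use of the defining equation $Q''=Q-Q^m$, its first integral, and differentiation in the scaling parameter. For the scaling relations in part (1) I would simply change variables $y=\sqrt{c}\,x$ in $Q_c(x)=c^{1/(m-1)}Q(\sqrt{c}\,x)$: this turns $\int_\R Q_c^p\,dx$ into $c^{p/(m-1)-1/2}\int_\R Q^p$, which, with $\theta=\frac1{m-1}-\frac14$, gives directly the exponents $\theta-\frac14$ for $\int_\R Q_c$, $2\theta$ for $\int_\R Q_c^2$, and $2\theta+1$ for $\int_\R Q_c^{m+1}$ in \eqref{QQ2}--\eqref{Qm}. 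The two remaining identities, the value of $\int_\R\Lambda Q_c$ in \eqref{Qm} and that of $\int_\R\Lambda Q_c\,Q_c$ in \eqref{LQQ}, I would then obtain for free by differentiating in $c$: since $\Lambda Q_c=\partial_c Q_c$, one has $\int_\R\Lambda Q_c=\partial_c\int_\R Q_c$ and $\int_\R\Lambda Q_c\,Q_c=\frac12\partial_c\int_\R Q_c^2$, so I merely differentiate the closed expressions $c^{\theta-1/4}\int_\R Q$ and $c^{2\theta}\int_\R Q^2$ just found. The interchange of $\partial_c$ with the integral is legitimate because $Q\in\mathcal{S}(\R)$ and the $c$-dependence is smooth.

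To close part (1) it remains to express $\int_\R Q^{m+1}$ through $\int_\R Q^2$, and this I would do with two standard multiplier identities. Multiplying $Q''=Q-Q^m$ by $Q'$ and integrating yields the first integral $(Q')^2=Q^2-\frac{2}{m+1}Q^{m+1}$ (the constant vanishes since $Q,Q'\to0$ at infinity); integrating this over $\R$ gives $\int_\R Q'^2=\int_\R Q^2-\frac{2}{m+1}\int_\R Q^{m+1}$. Multiplying the equation instead by $Q$ and integrating by parts gives the complementary relation $\int_\R Q^{m+1}=\int_\R Q^2+\int_\R Q'^2$. Eliminating $\int_\R Q'^2$ between the two yields at once $\int_\R Q'^2=\frac{m-1}{m+3}\int_\R Q^2$, which is the first part of \eqref{y2Qm}, and $\int_\R Q^{m+1}=\frac{2(m+1)}{m+3}\int_\R Q^2$, which supplies the missing constant in \eqref{Qm}.

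For part (2) I would run exactly the same two-multiplier scheme, now carrying a weight $y^{2k}$ with $k=1,2$. Multiplying the first integral by $y^2$ (resp.\ $y^4$) and integrating gives one relation among $\int_\R y^{2k}Q'^2$, $\int_\R y^{2k}Q^2$ and $\int_\R y^{2k}Q^{m+1}$. Multiplying the equation by $y^2Q$ (resp.\ $y^4Q$) and integrating by parts gives a second one, using the elementary reductions $\int_\R yQQ'=-\frac12\int_\R Q^2$ and $\int_\R y^3QQ'=-\frac32\int_\R y^2Q^2$; this produces $\int_\R Q^2-\int_\R y^2Q'^2=\int_\R y^2Q^2-\int_\R y^2Q^{m+1}$ and $6\int_\R y^2Q^2-\int_\R y^4Q'^2=\int_\R y^4Q^2-\int_\R y^4Q^{m+1}$. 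For each weight, eliminating the $\int_\R y^{2k}Q^{m+1}$ term between the two relations gives the stated value of that integral, namely \eqref{y2Qm} and \eqref{y4Qm}, and back-substitution then returns \eqref{y2Qp} and \eqref{y4Qp}.

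I do not expect a genuine obstacle here: every identity collapses to integration by parts together with the equation and its first integral, and all boundary terms vanish by the Schwartz decay of $Q$. The only points demanding care are purely bookkeeping, keeping the weight powers and the constants $\frac{2}{m+1}$ and $\frac{m+3}{m+1}$ straight through the eliminations, and the routine justification of differentiating under the integral in the scaling step. As a cross-check, any of the formulas can be verified directly from the explicit profile \eqref{QQ}.
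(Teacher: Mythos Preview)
Your proposal is correct and is exactly the standard route to these identities. The paper itself does not give a proof of this lemma; it simply states the identities and refers the reader to \cite{MMcol1} and \cite{Mu1}, so there is nothing to compare against beyond noting that your multiplier/first-integral argument and your differentiation-in-$c$ trick are precisely the computations underlying those references. One tiny wording slip: in your last paragraph you say that eliminating the $\int_\R y^{2k}Q^{m+1}$ term ``gives the stated value of that integral, namely \eqref{y2Qm} and \eqref{y4Qm}''; of course eliminating it yields the $\int_\R y^{2k}Q'^2$ formulas \eqref{y2Qp}--\eqref{y4Qp} first, and back-substitution then returns \eqref{y2Qm}--\eqref{y4Qm} (or the other way around, eliminating $\int_\R y^{2k}Q'^2$). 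Either order works and the algebra checks out.
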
 

\medskip

We use these identities to give a simplified expression for the term $\bar \delta$ in (\ref{bd1}). Indeed, we claim that
\be\label{bd2}
\bar \delta =  \frac{(m-1)}{(5-m)^2(m+3)M[Q]} \Big[Ê5\int_\R y^4 Q^2 - 5\chi^2 \int_\R Q^2- 12 \int_\R y^2 Q^2\Big].
\ee
\medskip

%and
%\be\label{hd2}
%\hat \delta = \frac{5(m-1)}{(5-m)^2(m+3)M[Q]} \Big[Ê\int_\R y^4 Q^2 -\xi^2 \int_\R Q^2+ p \int_\R y^2 Q^2\Big]
%\ee

\medskip

\noindent
{\bf Proof of (\ref{bd2}).} Using (\ref{AB1}) and (\ref{xichi}),
\bee
- \frac {12}{(5-m)} \int_\R yQ' B_1 & =& \frac 6{(5-m)^2} \int_\R y(y^2 +\chi) Q' Q = \frac 6{(5-m)^2} \Big[Ê-\frac 32 \int_\R y^2 Q^2 -\frac 12 \chi \int_\R Q^2\Big] \\
& =& -\frac 6{(5-m)^2} \int_\R y^2 Q^2;
\eee
\bee
 -\frac12 (m-1)(m-3) \int_\R yQ^{m-2}Q' B_1^2 &=&  -\frac {(m-1)(m-3)}{8(5-m)^2} \int_\R yQ^m Q' (y^4+ 2\chi y^2 + \chi^2) \\
& =&  \frac{(m-1)(m-3)}{8(5-m)^2 (m+1)} \Big[  5\int_\R y^4 Q^{m+1} +6\chi \int_\R y^2 Q^{m+1} + \chi^2 \int_\R Q^{m+1}\Big];
\eee
$$
2\int_\R B_1^2 = \frac 1{2(5-m)^2} \Big[ \int_\R y^4 Q^2 +2\chi \int_\R y^2 Q^2 +\chi^2 \int_\R Q^2\Big];
$$
$$
-2\int_\R Q^{m-1}B_1^2 = \frac{-1}{2(5-m)^2}\Big[ \int_\R y^4 Q^{m+1}  + 2\chi \int_\R y^2Q^{m+1} +\chi^2\int_\R Q^{m+1}\Big];
$$
and from the identity $Q'' = Q-Q^m$,
\bee
-\frac 4{5-m}\int_\R y^2Q''B_1 & =&  \frac 2{(5-m)^2} \int_\R y^2 (y^2+ \chi) (Q^2 -Q^{m+1}) \\
& =&  \frac 2{(5-m)^2}  \Big[Ê\int_\R y^4 Q^2 + \chi \int_\R y^2 Q^2 \Big] - \frac 2{(5-m)^2}  \Big[Ê\int_\R y^4 Q^{m+1} + \chi \int_\R y^2 Q^{m+1} \Big].
\eee
Replacing these identities in (\ref{bd1}), we get
\bee
\bar \delta & = &  -\frac 4{(5-m)^3M[Q]} \Big[ -6\int_\R  y^2 Q^2  +\frac{5(m-1)(m-3)}{ 8(m+1)} \int_\R y^4 Q^{m+1} +\frac{5}{2}\int_\R y^4 Q^2 -\frac 5{2}\int_\R y^4Q^{m+1} \\
& & \qquad \qquad \qquad  + 3\chi \Big\{ \frac{(m-1)(m-3)}{4(m+1)} \int_\R y^2 Q^{m+1} + \int_\R y^2 Q^2 - \int_\R y^2 Q^{m+1}\Big\} \\
& & \qquad \qquad   \qquad  +\frac 12 \chi^2\Big\{Ê \frac{(m-1)(m-3)}{4(m+1)} \int_\R y^2 Q^{m+1} + \int_\R y^2 Q^2 - \int_\R y^2 Q^{m+1}\Big\}  \Big].
\eee
Using (\ref{Qm}) (\ref{y2Qm}) and (\ref{y4Qm}), we get
\bee
\bar \delta & = &  -\frac 4{(5-m)^3M[Q]} \Big[ -\frac{5(5-m)(m-1)}{4(m+3)} \int_\R y^4 Q^{2}   -6 \int_\R y^2 Q^2  -\frac{15(m^2 -8m-1)}{4(m+3)} \int_\R y^2 Q^2 \\
& & \qquad \qquad \qquad \qquad  - \frac 34\chi \Big\{\frac{2(5-m)(m-1)}{(m+3)} \int_\R y^2 Q^{2} + \frac{(m^2 -8m-1)}{(m+3)} \int_\R Q^2 \Big\} \\
& & \qquad \qquad   \qquad \qquad  -Ê \frac{(5-m)(m-1)}{4(m+3)} \chi^2 \int_\R  Q^2   \Big].
\eee
Using the definition of $\chi$ (cf. (\ref{xichi})), we obtain
$$
\bar \delta = \frac{(m-1)}{(5-m)^2(m+3) M[Q]}\Big[ 5 \int_\R y^4 Q^2 - 5\chi^2 \int_\R Q^2 - 12\int_\R y^2 Q^2 \Big],
$$
as desired. We are then reduced to check that the above integral is not zero. Indeed, in the most important case $m=3$ we have, thanks to Mathematica, 
$$
M[Q]=\frac 12\int_\R Q^2 =2, \quad \chi = -\frac{\pi^2}{12}, \quad \int_\R y^4 Q^2 =\frac{7\pi^4}{60}, \quad \hbox{and} \quad \bar \delta = \frac{\pi^2}{6}(\frac{\pi^2}{9}-1) \sim 0.159>0.
$$
Note that we have used the explicit expression for the soliton given in (\ref{QQ}). When $m\in [4,5)$, the resulting expressions are no longer in a closed form. From the definition of $Q$ in (\ref{QQ}) we have computed explicit approximate values for $\bar \delta$:
\bee
& & \bar\delta(m=4) \sim 0.286; \quad \bar\delta(m=4.1) \sim 0.335; \quad \bar\delta(m=4.3) \sim 0.507; \\
& &  \bar\delta(m=4.5) \sim  0.925; \quad \bar\delta(m=4.7) \sim 2.437; \quad \bar\delta(m=4.9) \sim 21.096\ldots
\eee
It seems clear that $\bar \delta$ is an increasing, positive function of $m$. Note that the computed expressions increase as the exponent approaches the critical case $m=5$. However, it would be desirable a rigorous proof of this fact.

\medskip

Similarly, we can also compute an explicit expression for the complicated term $\hat \delta$ given in (\ref{hd1}),  in the case $m=3$, and using Mathematica. We have
$$
\hat \delta = \frac{4298\pi^4-17475\pi^2+1935}{109350} \sim 2.269.
$$
Finally, replacing in (\ref{final}), we obtain
\bea
k(\tilde T_\ve) &=& \hat \delta  \int_\R \frac{a'^3}{a^3} ds +  (v_0^2 - \frac 43)\bar\delta \int_\R \frac{a'^3}{a^{6}} ds + o_\ve(1) \nonu \\
& \geq& (\hat \delta      - \frac 43\bar\delta) \int_\R \frac{a'^3}{a^3} ds + o_\ve(1) \sim 2.05 \int_\R \frac{a'^3}{a^3} ds.\label{24F}
\eea
A rigorous bound on $\hat \delta$, in the cases $m\in [4,5)$, has escaped to us.

\medskip

{\bf Final conclusion.} Since $\bar \delta >0$ and the integral $\ds\int_\R \frac{a'^3}{a^{p+3}}$ is positive, there is \emph{at most} one $\tilde v_0 \geq 0$ such that (\ref{final}) is zero. Therefore, for all $v_0\neq \tilde v_0$, (\ref{kTe}) is proved and then Theorem \ref{MTL} holds. Moreover, thanks to (\ref{24F}), in the case $m=3$ we have $\tilde v_0 =0$, and Theorem \ref{MTL} is valid for all $v_0>0.$ This finishes the proof.

\bigskip


\begin{thebibliography}{10}

\bibitem{BL} H. Berestycki and P.-L. Lions, \emph{Nonlinear scalar field equations. I. Existence of a ground state}, Arch. Rational Mech. Anal. \textbf{ 82}, (1983) 313--345.

\bibitem{BJ} J.C. Bronski, and R.L. Jerrard, \emph{Soliton dynamics in a potential}, Math. Res. Lett. \textbf{7} (2000), no. 2-3, 329--342. 

\bibitem{BP} V. S. Buslaev, and G. Perelman, \emph{Scattering for the nonlinear Schr\"odinger equation: states that are close to a soliton}, St. Petersburg Math. J.  \textbf{4}  (1993),  no. 6, 1111--1142.

\bibitem{Caz} T. Cazenave, \emph{Semilinear Schr\"odinger equations}, Courant Lecture Notes in Mathematics, 10. New York University, Courant Institute of Mathematical Sciences, New York; American Mathematical Society, Providence, RI, 2003. xiv+323 pp. %pp. 138.

\bibitem{CL} T. Cazenave, and P.-L. Lions, \emph{Orbital stability of standing waves for some nonlinear Schr\"odinger equations},  Comm. Math. Phys.  \textbf{85} (1982), no. 4, 549--561.

\bibitem{Cu1} S. Cuccagna, \emph{Stabilization of solutions to nonlinear Schr\"odinger equations}. Comm. Pure Appl. Math. \textbf{54} (2001), no. 9, 1110--1145. 

\bibitem{Cu2} S. Cuccagna, \emph{On asymptotic stability of ground states of NLS}. Rev. Math. Phys. \textbf{15} (2003), no. 8, 877--903. 

\bibitem{DS} Dejak, S. I. and Sigal, I. M., \emph{Long-time dynamics of KdV solitary waves over a variable bottom}, Comm. Pure Appl. Math. \textbf{59} (2006), no. 6, 869--905.

\bibitem{GS} Gang, Z., and Sigal, I. M., \emph{Relaxation of solitons in nonlinear Schr\"odinger equations with potential}, Adv. Math. \textbf{216} (2007), no. 2, 443--490.

\bibitem{GW} Gang, Z., and Weinstein, M.I., \emph{Dynamics of Nonlinear Schr\"oodinger / Gross-Pitaevskii Equations; Mass Transfer in Systems with Solitons and Degenerate Neutral Modes}, to appear in Anal. and PDE.

\bibitem{GHW} Goodman, R. H., Holmes, P., and Weinstein, M. I., \emph{Strong soliton-defect interactions}, Physica D \textbf{161} (2004), 21--44.

\bibitem{GV} J. Ginibre, and G. Velo, \emph{On a Class of nonlinear Schr\"odinger equations. I and II}. J. Func. Anal. \textbf{32}, 1--71 (1979).

\bibitem{GSS1} M. Grillakis, J. Shatah, and W. Strauss, \emph{Stability theory of solitary waves in the presence of symmetry. I}.  J. Funct. Anal.  \textbf{74}  (1987),  no. 1, 160--197.

\bibitem{GSS2} M. Grillakis, J. Shatah, and W. Strauss, \emph{Stability theory of solitary waves in the presence of symmetry. II}.  J. Funct. Anal.  \textbf{94}  (1990),  no. 2, 308--348. 
 
\bibitem{Gr1} R. Grimshaw, \emph{Slowly varying solitary waves. II. Nonlinear Schr\"odinger equation}, Proc. Roy. Soc. London Ser. A \textbf{368} (1979), no. 1734, 377--388.

\bibitem{FGJS} S. Gustafson, J. Fr\"ohlich, B. L. G. Jonsson, and I. M. Sigal, \emph{Solitary wave dynamics in an external potential}, Comm. Math. Phys. \textbf{250} (2004), 613--642. 

\bibitem{GFJS} S. Gustafson, J. Fr\"ohlich, B. L. G. Jonsson, and  I. M. Sigal, \emph{Long time motion of NLS solitary waves in a confining potential}, Ann. Henri Poincar\'e \textbf{7} (2006), no. 4, 621--660.

 \bibitem{H} J. Holmer, \emph{Dynamics of KdV solitons in the presence of a slowly varying potential}, to appear in IMRN.

\bibitem{HZ} J. Holmer, and M. Zworski, \emph{Soliton interaction with slowly varying potentials}, Int. Math. Res. Not., (2008) \textbf{2008}, art. ID rnn026, 36 pp.

\bibitem{HMZ0}  J. Holmer, J. Marzuola, and M. Zworski, \emph{Soliton Splitting by External Delta Potentials}, J. Nonlinear Sci. \textbf{17} no. 4 (2007), 349--367.

\bibitem{HMZ} J.  Holmer, J. Marzuola and M. Zworski, \emph{Fast soliton scattering by delta impurities}, Comm. Math. Phys., \textbf{274}, no.1 (2007) 187--216.

\bibitem{HPZ} J. Holmer, G. Perelman, and M. Zworski, \emph{Effective dynamics of double solitons for perturbed mKdV}, Comm. Math. Phys. \textbf{305} (2011), no. 2, pp. 363--425.

\bibitem{KN1} D. J . Kaup, and A. C. Newell, \emph{Solitons as particles, oscillators, and slowly changing media: a singular perturbation theory}, Proc. Roy. Soc. London Ser. A \textbf{361} (1978), 413--446.

\bibitem{Martel} Y. Martel, and F. Merle, \emph{Multi solitary waves for nonlinear Schr\"odinger equations}, Ann. IHP Nonlinear Anal.  \textbf{23} (2006), 849--864.

\bibitem{MMcol1} Y. Martel and F. Merle, \emph{Description of two soliton collision for the quartic gKdV equations}, to appear in Annals of Mathematics.

\bibitem{MMcol2} Y. Martel and F. Merle, \emph{Stability of two soliton collision for nonintegrable gKdV equations}, Comm. Math. Phys. \textbf{286} (2009), 39--79.

\bibitem{MMcol3} Y. Martel and  F. Merle, \emph{Inelastic interaction of nearly equal solitons for the quartic gKdV equation}, Invent. Math. \textbf{183} (2011), no. 3, 563--648.

\bibitem{MMT} Y. Martel, F. Merle, and T. P. Tsai, \emph{Stability in $H^1$ of the sum of $K$ solitary waves for some nonlinear Schr\"odinger equations}, Duke Math. Journal, \textbf{133} (2006), no. 3, 405--466. 

\bibitem{Mu0} C. Mu\~noz, \emph{On the inelastic two-soliton collision for gKdV equations with general nonlinearity}, Int. Math. Res. Not. IMRN \textbf{2010}, no. 9, 1624--1719.

\bibitem{Mu1} C. Mu\~noz, \emph{On the soliton dynamics under slowly varying medium for generalized NLS equations}, to appear in Math. Annalen (arXiv:1002.1295). 

\bibitem{Mu2} C. Mu\~noz, \emph{On the soliton dynamics under slowly varying medium for generalized Korteweg- de Vries equations}, to appear in Analysis and PDE (arXiv:0912.4725).

\bibitem{Mu3} C. Mu\~noz, \emph{Dynamics of soliton-like solutions for slowly varying, generalized gKdV equations: refraction vs. reflection}, to appear in SIAM J. Math. Anal. arXiv:1009.4905.

\bibitem{Mu4} C. Mu\~noz, \emph{Inelastic character of solitons of slowly varying gKdV equations}, to appear in Comm. Math. Phys. arXiv:1107.5328.

\bibitem{New} A. Newell, \emph{Solitons in Mathematics and Physics}, CBMS-NSF Regional Conference Series in Applied Mathematics, 48. Society for Industrial and Applied Mathematics (SIAM), Philadelphia, PA, 1985. 

\bibitem{P} G.S. Perelman, \emph{Asymptotic stability of multi-soliton solutions for nonlinear Schr\"odinger equations},  Comm. in Partial Diff. Eqns. \textbf{29}, (2004)  1051--1095.

\bibitem{P5} G. Perelman, \emph{A remark on soliton-potential interactions for nonlinear Schr\"odinger equations}, Math. Res. Lett. \textbf{16} 3 (2009), pp. 477--486.

\bibitem{RSS} I. Rodnianski, W. Schlag,  and A. Soffer, \emph{Asymptotic stability of $N$ soliton states of NLS}. Preprint, 2003.

\bibitem{SW} A. Soffer, and M.I. Weinstein, \emph{Multichannel nonlinear scattering for nonintegrable equations}.  Comm. Math. Phys.  \textbf{133}  (1990),  no. 1, 119--146.

\bibitem{SW2} A. Soffer, and M.I. Weinstein, \emph{Selection of the ground state for nonlinear Schr\"odinger equations}, Reviews in Math. Phys.  \textbf{16}  (2004),  no. 8, 977--1071.

\bibitem{We1} M.I. Weinstein, \emph{Modulational stability of ground states of nonlinear Schr\"odinger equations},  SIAM J. Math. Anal.  \textbf{16}  (1985),  no. 3, 472--491.

\bibitem{We2} M.I. Weinstein, \emph{Lyapunov stability of ground states of nonlinear dispersive evolution equations}, Comm. Pure Appl. Math. \textbf{39}, (1986) 51--68.

\bibitem{ZS} VE Zakharov, and AB Shabat, \emph{Exact theory of two-dimensional self-focusing and one-dimensional self-modulation of waves in nonlinear media}, Soviet Physics JETP. Vol. 34, pp. 62--69. Jan. 1972. 

\end{thebibliography}
\end{document}